\colorlet{mdtRed}{red!50!black}
\definecolor{dblue}{rgb}{0,0,.6}
\DeclareMathOperator{\Pic}{\textnormal{Pic}}
\DeclareMathOperator{\Malpha}{M^{\alpha}_X}
\DeclareMathOperator{\MHalpha}{M^{\alpha}_{X,H}}
\DeclareMathOperator{\Metaalpha}{(M^{\alpha}_X)^\eta}
\DeclareMathOperator{\gammaeta}{\gamma_{\eta}}
\DeclareMathOperator{\MHetaalpha}{(M^{\alpha}_{X,H})^{\eta}}
\DeclareMathOperator{\yeta}{Y_\eta}
\newtheorem{theorem}{Theorem}[section]
\newtheorem{lemma}[theorem]{Lemma} 
\newtheorem{proposition}[theorem]{Proposition}
\newtheorem{corollary}[theorem]{Corollary}
\theoremstyle{definition}
\newtheorem{definition}[theorem]{Definition}
\newtheorem{remark}[theorem]{Remark}
\numberwithin{equation}{section} 
\begin{document}
	
	\baselineskip=15.5pt 
	
	\title[Chen--Ruan 
	cohomology and orbifold Euler characteristic of moduli spaces of parabolic bundles]{Chen--Ruan 
		cohomology and orbifold Euler characteristic of moduli spaces of parabolic bundles}
	
	\author[I. Biswas]{Indranil Biswas}
	
	\address{Department of Mathematics, Shiv Nadar University, NH91, Tehsil
		Dadri, Greater Noida, Uttar Pradesh 201314, India}
	
	\email{indranil.biswas@snu.edu.in, indranil29@gmail.com}
	
	\author[S. Chakraborty]{Sujoy Chakraborty}
	
	\address{Department of Mathematics, Indian Institute of Technology Chennai, Chennai, India}
	\email{sujoy.cmi@gmail.com}
	
	\author[A. Dey]{Arijit Dey}

	\address{Department of Mathematics, Indian Institute of Technology Chennai, Chennai, India}
	\email{arijitdey@gmail.com}
	\subjclass[2010]{14D20, 14F05, 14F22, 14H60, 55N32}
	\keywords{Chen--Ruan Cohomology; Orbifold Euler Characteristic; Moduli space; Parabolic bundle.}

	\begin{abstract}
		We consider the moduli space of stable parabolic Higgs bundles of rank $r$ and fixed determinant, and having 
		full flag quasi-parabolic structures over an arbitrary parabolic divisor on a smooth connected complex 
		projective curve $X$ of genus $g$, with $g\,\geq\, 2$. The group $\Gamma$ of $r$-torsion points
		of the Jacobian of $X$ acts on this moduli space. We describe the connected components of the various fixed point loci of this moduli under non-trivial elements from $\Gamma$.
		When the Higgs field is zero, or in other words when we restrict ourselves to the moduli of stable parabolic bundles, we also compute the orbifold Euler characteristic of the corresponding global quotient orbifold.
		We also describe the Chen--Ruan cohomology groups of this orbifold under certain conditions on the rank and degree, and describe the Chen--Ruan product structure in special cases.
	\end{abstract}
	
	\maketitle
	
	\tableofcontents
	
	\section{Introduction}
	
	Orbifolds are one of the simplest kind of singular spaces. They arise naturally in many branches of mathematics.
	One such instance is symplectic reductions, where the resulting space is often
	an orbifold. They also appear naturally in string theory, where many known Calabi-Yau threefolds appear as
	crepant resolution of some Calabi-Yau orbifold. In string theory, the Euler characteristic of orbifolds plays an important
	role. In \cite{DHVW85} its authors concluded that for quotient orbifolds of the form $M/G$, where $G$ is
	a finite group acting on a compact manifold $M$, the correct Euler characteristic
	in the context of string theory is the so-called \textit{orbifold Euler characteristic}:
	\begin{align}\label{orbifold-euler-char-definition}
		\chi_{orb}(M,\,G) \,\,:=\,\, \frac{1}{|G|}\sum_{gh=hg}\chi\left(M^{\langle g,h\rangle}\right)
		\,\,=\,\, \sum_{[g]} \chi\left(M^g/C(g)\right)\,.
	\end{align}
	Here the second sum is taken over a set of representatives for the conjugacy classes of $G$; the
	above equality follows from \cite{HH90}. In \eqref{orbifold-euler-char-definition} $M^{\langle g,\,h\rangle}$ denotes the common fixed points for $g$ and $h$,
	while $M^g$ denotes the fixed point set (which is a submanifold) and $C(g)$ denotes the centralizer of $g$
	with $\chi(M^g/C(g))$ denoting the usual topological Euler characteristic of the quotient space.
	It is worth pointing out that in some situations when $M/G$ has a resolution of singularities
	$$\widetilde{M/G}\,\longrightarrow\, M/G$$
	such that the canonical bundle of $\widetilde{M/G}$ is trivial, the above defined
	$\chi_{orb}(M,\,G)$ coincides with the usual Euler characteristic of the resolution $\widetilde{M/G}$.
	
	On the other hand, the Chen--Ruan cohomology ring of an orbifold was introduced in \cite{CR04} as the degree zero part of the small quantum cohomology ring of the orbifold constructed by the same authors \cite{CR02}. It contains the usual cohomology ring of the orbifold as a subring. In general, the Chen--Ruan cohomology group has a \textit{rational} grading, where the grading is shifted by the \textit{degree-shift numbers} (\S~\ref{chen-ruan cohomology section}) (also known as Fermionic shifts; these numbers also appear in the study of mirror symmetry related problems for Higgs bundles). When the orbifold has an algebraic structure, then the Betti numbers of the Chen--Ruan cohomology are invariant under certain crepant resolutions \cite{Y04}, and this makes the study of these groups important in Calabi-Yau geometry. It is conjectured by Ruan \cite{R02} that the Chen--Ruan cohomology ring is isomorphic to the cohomology ring of a smooth crepant resolution if both the orbifold and the resolution are hyper-K\"ahler. In case of a global quotient orbifold $M/G$ for a finite group $G$, its Chen--Ruan cohomology groups are described in terms of the cohomologies of the connected components of the fixed point loci 
	$M^g$ for various $g\in G$ (cf. \cite{FG03}). The degree-shift numbers may vary with these components.\\
	It is of particular interest to study the Chen--Ruan cohomology and the orbifold Euler characteristic for orbifolds arising in the context of moduli spaces. To this end, let $X$ be an irreducible smooth complex projective curve of genus $g\,\geq \,2$. Fix an integer $r\geq 2$ and a line bundle $\xi$ of degree $d$ on $X$. If $g=2,$ we assume that $r\,\geq\, 3$. let $M(r,d)$ denote the moduli space of stable vector bundles on $X$ of rank $r$ and degree $d$; it is a smooth quasi-projective variety. Let $M_{\xi}(r,d)\, \subset\, M(r,d)$ denote the moduli space of stable vector bundles on $X$ with rank $r$ and determinant $\xi$. The Jacobian of $X$, denoted by $\Pic^0(X)$, is an Abelian variety of dimension $g$. Its $r$-torsion points form a finite group isomorphic to $(\mathbb{Z}/r\mathbb{Z})^{2g}$. Let us denote this group by $\Gamma$. The group $\Gamma$ acts on $M(r,d)$ by tensorization with line bundles, and this action
	preserves each $M_\xi(r,d)$. The Chen--Ruan cohomology of the quotient orbifolds $M_\xi(r,d)/\Gamma$ has been described in \cite{BP08,BP10} when $r$ is prime.
	
	In this paper, we consider the moduli space of parabolic stable vector bundles, which were introduced and constructed by Mehta and Seshadri in \cite{MS80}. These are vector bundles $E$ on $X$ with a filtration of fibers of $E$ at a collection of finitely many points $S$ of $X$ and certain real numbers, called weights, 
	attached to these filtrations (see $\S~2$ for details). 
	Let $M^\alpha_{\xi}(r,d)$ denote the moduli space of rank $r$ stable parabolic
	vector bundles $E_*$ on $X$ whose underlying bundle $E$ has determinant $\xi\,\in\,
	\Pic^d(X)$ and the parabolic structure at each parabolic point $p_i\,\in\, S$ is of the full-flag type. We also assume
	that the weights are \textit{generic}, so that parabolic semistability is equivalent to parabolic stability, which makes $M^\alpha_{\xi}(r,d)$ a compact complex manifold. The
	group $\Gamma$ acts on $M^{\alpha}_\xi(r,d)$ by tensorization as before. While the introduction of
	parabolic structures bring interesting new phenomenon, it will also be apparent that working
	with parabolic moduli has some technical advantage over the usual vector bundles (i.e., those
	without parabolic structures; see Lemma \ref{parabolic fixed point}).
	
	Recently there has been some progress in understanding the Chen--Ruan cohomology groups for the orbifolds $M^{\alpha}_\xi(r,d)/\Gamma$ 
	in some special cases. In \cite{BD10}, these groups were studied under the assumptions that the rank $r=2$, degree $d=1$ and the 
	weights are \textit{sufficiently small} (in the sense of \cite[\S~5]{BY99}). The case when the rank is any prime number (with $r,d$ 
	coprime and the weights are small) was studied in \cite{BDS22}, but their description lacks a satisfactory determination of the 
	degree-shift numbers (see [Remark 4.5, \textit{loc. cit.}]).
	
	The study of these quantities requires an understanding of the connected components of their fixed point loci under the action of elements in $\Gamma$. 
	In fact, our discussion up to Section \ref{connected componets of fixed point loci} is valid for the more general case of the moduli of parabolic stable Higgs bundles, which are pairs $(E_*,\varphi_*)$ with $E_*$ a parabolic bundle and $\varphi_* :  E_*\rightarrow E_*\otimes K_X(D)$ is a parabolic morphism called Higgs field, satisfying certain stability conditions (see \S\ref{se2} for details). Here $D$ is the divisor given by the sum of points in $S$. Let $M^{\alpha}_{\xi,H}(r,d)$ denote the moduli of such pairs. Our first main result is to provide a description of the connected components of the fixed-point loci $M^{\alpha}_{\xi,H}(r,d)^\eta$ for non-trivial $\eta\in\Gamma$. This is done in Section \ref{connected componets of fixed point loci}, which takes input from Section \ref{fixed point loci section} (more specifically, Lemma \ref{parabolic fixed point}) where we describe these fixed point loci as a finite group quotient of certain varieties, which are in some sense a generalization of Prym varieties for spectral covers associated to these $\eta$. 
	Our next main result is the computation of the orbifold Euler characteristic of the quotient orbifold $M^{\alpha}_{\xi}(r,d)/\Gamma$. We achieve this by computing the cohomology groups of the quotients $M^{\alpha}_{\xi}(r,d)^{\eta}/\Gamma$ in terms of the cohomology of Prym and cohomology of parabolic moduli for smaller ranks (Proposition \ref{equivariant cohomology proposition}). As a consequence, we obtain the following:
	
	\begin{theorem}[\textnormal{Theorem \ref{orbifold euler characteristic corollary}}]
		The orbifold Euler characteristic (\ref{orbifold euler characteristic}) of $M^{\alpha}_{\xi}(r,d)/\Gamma$ is given by
		$$\chi_{orb}\left(M^{\alpha}_{\xi}(r,d)\,,\,\Gamma\right) = \chi\left(M^{\alpha}_{\xi}(r,d)\right)\,,$$
		where the right-hand side denotes the usual Euler characteristic.
	\end{theorem}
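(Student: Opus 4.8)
The plan is to exploit that $\Gamma$ is abelian, so that every conjugacy class is a singleton and $C(\eta)=\Gamma$ for all $\eta\in\Gamma$; the second expression in \eqref{e1.1} then becomes
\[\chi_{orb}\!\left(M^{\alpha}_{\xi}(r,d),\,\Gamma\right)\;=\;\sum_{\eta\in\Gamma}\chi\!\left(M^{\alpha}_{\xi}(r,d)^{\eta}/\Gamma\right),\]
and the task is to evaluate each summand and add them up. The term $\eta=1$ contributes $\chi\big(M^{\alpha}_{\xi}(r,d)/\Gamma\big)$, and the remaining terms record the twisted sectors.

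First I would feed in the structure of the fixed loci. By Lemma \ref{parabolic fixed point} and the analysis of Section \ref{connected componets of fixed point loci}, for a nontrivial $\eta$ of order $k\mid r$ the locus $M^{\alpha}_{\xi}(r,d)^{\eta}$ is, modulo the cyclic deck group of the associated \'etale cover $\yeta\to X$, assembled from a Prym-type abelian variety of that cover and parabolic moduli of rank $r/k$ on $\yeta$; Proposition \ref{equivariant cohomology proposition} turns this into an explicit description of $H^{*}\big(M^{\alpha}_{\xi}(r,d)^{\eta}/\Gamma;\mathbb{Q}\big)$. Taking alternating sums, and using that $\chi$ is multiplicative on products and satisfies $\chi(Z/F)=\tfrac{1}{|F|}\sum_{f\in F}\chi(Z^{f})$ for a finite group $F$, the contribution of the sector $\eta$ becomes a sum over the deck elements $\tau$ of the product of (i) the Lefschetz number of $\tau$ on the Prym --- i.e.\ the number of its fixed points there --- and (ii) the Euler characteristic of the $\tau$-fixed locus in the relevant lower-rank parabolic moduli. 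The hypothesis $g\geq 2$ is what makes this manageable: the Prym attached to $\eta$ has dimension $(k-1)(g-1)>0$, so its whole cohomology has Euler characteristic $0$; this annihilates the term of the identity deck element, and, since $\Gamma$ acts on the Prym by translations, every term whose deck part is trivial, leaving a genuinely recursive expression in the Euler characteristics of lower-rank parabolic moduli (twisted by deck symmetries) and the fixed-point counts on the Pryms.

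Independently, I would set up a recursion for $\chi\big(M^{\alpha}_{\xi}(r,d)\big)$ itself. One route is the Harder--Narasimhan / Atiyah--Bott-style computation of the Poincar\'e polynomial of a moduli space of parabolic bundles; a route closer in spirit to the above is to stratify $M^{\alpha}_{\xi}(r,d)$ by the $\Gamma$-stabilizer, which gives $\chi\big(M^{\alpha}_{\xi}(r,d)\big)=\sum_{G\leq\Gamma}\tfrac{|\Gamma|}{|G|}\,\chi\big((M^{\alpha}_{\xi}(r,d)/\Gamma)_{(G)}\big)$, each higher-stabilizer stratum being controlled, again via Lemma \ref{parabolic fixed point}, by a positive-dimensional Prym and lower-rank parabolic moduli over the corresponding cover. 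The final step --- and the step I expect to be the real obstacle --- is the bookkeeping that identifies the sum over twisted sectors with this stratification formula: one must re-sum over subgroups of $\Gamma\cong(\mathbb{Z}/r)^{2g}$, track the towers of \'etale covers $\yeta$ and the multiplicativity of Prym dimensions in such towers, and match the lower-rank parabolic moduli occurring in the twisted sectors with those occurring in the strata, all while being careful about exactly which deck and translation terms survive the vanishing of Euler characteristics of positive-dimensional abelian varieties. With that combinatorial identity established, induction on $r$ closes the argument, the prime-rank base case being the parabolic analogue of the computations behind \cite{BP08,BP10}.
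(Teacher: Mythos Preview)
Your proposal contains the decisive observation but then buries it under a layer of machinery the paper never needs. The paper's proof is essentially one line once Proposition~\ref{equivariant cohomology proposition} is available: that proposition already computes the cohomology of the \emph{quotient} $(\Malpha)^\eta/\Gamma$ as
\[
H^*\bigl((\Malpha)^\eta/\Gamma\bigr)\;\simeq\;\bigoplus_{[\mathbf{t}]} H^*\bigl(\mathrm{Prym}_{\gammaeta}(\yeta)\bigr)\otimes H^*\bigl(M^{s([\mathbf{t}])}_{\yeta}(l,\mathcal{L}_\eta)\bigr),
\]
so taking Euler characteristics gives $\chi\bigl((\Malpha)^\eta/\Gamma\bigr)=\sum_{[\mathbf{t}]}\chi(\mathrm{Prym})\cdot\chi(M^{s([\mathbf{t}])}_{\yeta})$, which vanishes for every nontrivial $\eta$ because the Prym is a positive-dimensional complex torus. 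For $\eta=\mathcal{O}_X$ one has $m=1$, $\yeta=X$, trivial Prym, a single $[\mathbf{t}]$, and the formula collapses to $\chi(\Malpha)$. That is the entire argument.

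Your plan, by contrast, invokes Proposition~\ref{equivariant cohomology proposition} but then seems to forget that the $\Gamma$-quotient has already been taken: you reintroduce the Lefschetz-type formula $\chi(Z/F)=\tfrac{1}{|F|}\sum_{f}\chi(Z^f)$, sum over deck elements, and arrive at what you call ``a genuinely recursive expression'' requiring a separate stratification of $\Malpha$ and an inductive matching. None of this is needed. The step where you say the vanishing $\chi(\mathrm{Prym})=0$ ``annihilates the term of the identity deck element'' and leaves residual terms is the misstep: the Prym factor in Proposition~\ref{equivariant cohomology proposition} multiplies the \emph{entire} contribution of the $\eta$-sector, not just one term in a further decomposition, so nothing survives for nontrivial $\eta$. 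Likewise, you do not need a recursion to identify $\chi(\Malpha/\Gamma)$ with $\chi(\Malpha)$: this is the $\eta=\mathcal{O}_X$ instance of the same proposition (equivalently, the fact from \cite[Proposition~4.1]{BD10} that $\Gamma$ acts trivially on $H^*(\Malpha)$). Drop the recursion and induction entirely; the proof is immediate from the proposition you already cited.
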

	The reason for taking the Higgs field to be zero from Section \ref{orbifold euler characteristic section} onwards is that the $\Gamma$--action on the cohomology of the parabolic Higgs moduli is not well understood yet; for example, the action on the cohomology of $M^{\alpha}_{\xi,H}(r,d)$ is expected to be non-trivial (which is the case for usual Higgs moduli, cf. \cite{HP12}), whereas a key ingredient in the above proof is that $\Gamma$--action on the cohomology of the parabolic moduli $M^{\alpha}_{\xi}(r,d)$ is trivial \cite[Proposition 4.1]{BD10}.\\
	
	Finally, in Section \ref{chen-ruan cohomology section}, we study the Chen--Ruan cohomology groups for the orbifold $M^{\alpha}_\xi(r,d)/\Gamma$ 
	when the rank $r$ is a product of \textit{distinct} primes, $r$ and $d$ are coprime, and for \textit{any} system of generic weights. As discussed above, apart from understanding the connected components of the fixed point loci $M^{\alpha}_\xi(r,d)^{\eta}$ for $\eta\in\Gamma$, one of the other main challenges is the computation of their degree-shift numbers. This is done in Corollary \ref{degree-shift number collary}, where the conditions on $r$ and $d$ mentioned in the beginning of the paragraph plays a crucial role (cf. Proposition \ref{Gamma action on components}). We conclude Section \ref{chen-ruan cohomology section} with a study of the Chen--Ruan product. The multiplicative structure is obtained using a certain Chen--Ruan poincar\'e pairing, which we describe in our context (Definition \ref{chen-ruan pairing}). We conclude the section by giving a necessary condition for the product to be nonzero in certain cases (see Lemma \ref{chen-ruan product lemma} and the corollary following it).\\
	It is worth mentioning that the degree-shift numbers in the more general case of parabolic Higgs moduli is already known; in Higgs 
	context these numbers are also referred to as Fermionic shifts (see \cite[Page 16]{GO18}). The determination of these numbers for parabolic 
	Higgs case is much easier due to the presence of a particular type of symplectic structure on those moduli, which is unfortunately not 
	available in our case, which makes their determination more difficult and interesting.
	
	\section{Preliminaries}\label{se2}
	
	Let $X$ be an irreducible smooth complex projective curve (equivalently, a compact connected Riemann surface) of genus
	$g$, with $g\,\geq\, 2$. Vector bundles on $X$ will always mean to be algebraic vector bundles.
	Fix an integer $r\,\geq\, 2;$ if $g\,=\,2,$
	then set $r\,\geq\, 3$. Fix a line bundle $\xi$ on $X$ of degree $d$. By a point of $X$ we will always mean a closed point. Fix a
	finite subset $$S\,=\,\{p_1,\,p_2,\,\cdots,\,p_s\}\, \subset\, X$$ of $s$ distinct points; they will be referred to
	as \textit{parabolic points}.
	Let $E$ be a vector bundle of rank $r$ on $X$. The fiber of $E$ over any point $z\, \in\, X$ will be denoted by $E_z$. 
	
	\begin{definition}
		A \textit{parabolic data of rank r} for points of $S$ consists of the following collection: for each $p_i\,\in \,S$
		\begin{itemize}
			\item a string of positive integers $(m^i_1,\,m^i_2,\,\cdots,\,m^i_{l_i})$ such that $\sum_{j=1}^{l_i}m^i_j \,=\, r$, and
			
			\item an increasing sequence of real numbers $0\,\leq\, \alpha^i_1\,<\,\alpha^i_2\,<\,\cdots\,<\,\alpha^i_{l_i}\,<\,1$.
		\end{itemize}
		
		If $m^i_j \,=\, 1$ for all $1\,\leq\, j\,\leq \,l_i$ and $1\,\leq \,i\,\leq \,s$, we say that it is a
		\textit{full-flag} parabolic data; in that case $l_i\,=\, r$ for all $i$.
		
		A \textit{parabolic structure} on $E$ over $S$, with parabolic data of rank $r$ as above, consists of the following:
		for each $p_i\,\in\, S$, a weighted filtration of subspaces
		\begin{align*}
			E_{p_i} \,=\, E^i_1\,\supsetneq\, E^i_2\,\supsetneq\, &\,\cdots\, \supsetneq\, E^i_{l_i}\,\supsetneq\, E^i_{l_i+1} \,=\, 0\\
			0\,\leq\, \alpha^i_1\,<\,\alpha^i_2\,<\,&\,\cdots\,<\,\alpha^i_{l_i}\,<\,\alpha^i_{l_i+1} \,=\, 1
		\end{align*}
		such that $m^i_j \,=\, \dim(E^i_j/E^i_{j+1})$ for all $1\,\leq\, j\,\leq\, l_i$ and all $1\,\leq \,i\,\leq\, s$.
		
		The above collection 
		$\alpha\,:=\,\{(\alpha^i_1\,<\,\alpha^i_2\,<\,\cdots\,<\,\alpha^i_{l_i})_{ 1\leq i\leq 
			s}\}$ is called the \textit{weights}, and the above integer $m^i_j$ is called the 
		\textit{multiplicity} of the weight $\alpha^i_j$.
		
		By a \textit{parabolic bundle} we mean a collection of data $(E,\, m,\, \alpha)$, where $E$ is a vector bundle on $X$, while $m$ and $\alpha$ are as 
		described above. For notational convenience, such a parabolic vector
		bundle will also be referred to as $E_*$; the vector bundle
		$E$ is called the underlying bundle for $E_*$. A full-flag parabolic data is also called a \textit{full-flag} parabolic structure.
	\end{definition}
	
	\begin{definition}
		Let $E_*$ be a parabolic bundle on $X$ of rank $r$. The \textit{parabolic degree} of $E_*$ is defined as 
		\[\textnormal{pardeg}(E_*)\,:=\, \textnormal{deg}(E)+\sum_{i=1}^{s}\sum_{j=1}^{l_i}m^i_j\alpha^i_j\, \in\, {\mathbb R}\] 
		and the \textit{parabolic slope} of $E_*$ is defined as 
		\[\textnormal{par}\mu(E_*)\,:=\, \dfrac{\textnormal{pardeg}(E_*)}{r}\, \in\, {\mathbb R}.\]
		The parabolic bundle $E_*$ is called \textit{parabolic semistable} (respectively,
		\textit{parabolic stable}) if for every proper sub-bundle $F\, \subsetneq\, E$ we have 
		\[
		\textnormal{par}\mu(F_*)\,\leq\, \textnormal{par}\mu(E_*)\ \ \, \left(\text{respectively, }\
		\textnormal{par}\mu(F_*)\, <\, \textnormal{par}\mu(E_*)\right),\]
		where $F_*$ denotes the parabolic bundle defined by $F$ equipped with the parabolic structure
		induced by the parabolic structure of $E_*$ (see \cite{MS80} for
		the details).
	\end{definition}
	
	\begin{definition}\label{def:generic-weight}
		A system of weights $\alpha$ is called \textit{generic} if every parabolic semistable bundle $E_*$, of given rank and
		degree, with weights $\alpha$ is parabolic stable. We refer to \cite{BY99} for more details.
	\end{definition}
	
	\begin{remark}\label{parabolic direct sum}
		See \cite{Ses}, \cite{MS80} for homomorphisms between parabolic bundles. The class of parabolic semistable bundles with
		fixed parabolic slope forms an abelian category. We refer to \cite[p.~ 68]{Ses} for further details.
	\end{remark}
	
	\begin{definition}
		Let $E_*$ and $E'_*$ be two parabolic vector bundles with parabolic data $(m,\,\alpha)$ and $(m',\,\alpha')$
		respectively. Let $\varphi \,:\, E\,\longrightarrow\, E'$ be a morphism of vector bundles. $\varphi$ is called a \textit{parabolic morphism} if, for each parabolic point $p$,
		$$\alpha_i^p\,>\,{\alpha'}_j^p \,\,\implies\,\, \varphi_p(E_i^p)\,\subset\, {E'}^p_{j+1}\,\,.$$
		We shall mean $\varphi_*$ to denote a parabolic morphism, and $\varphi$ to denote the corresponding morphism on the underlying bundles.
	\end{definition}
	
	\begin{definition}
		(i) A \textit{ parabolic} Higgs bundle on $X$ is a parabolic vector bundle $E_*$ together with a parabolic morphism
		\[\varphi_*\,\,:\,\,E_*\,\longrightarrow\, E_*\otimes K_X(D)\]
		which is called \textit{Higgs field}. This will be usually denoted by $(E_*\,,\,\varphi_*)$.\\			
		\hspace{0.85in}(ii) A (twisted) parabolic ${\rm SL}(r,k)$--Higgs bundle of fixed determinant $\xi$ on $X$ is
		a parabolic Higgs bundle $(E_*,\,\varphi_*)$ such that $\det(E)\,\simeq\, \xi$ and $\text{tr}(\varphi)\,=\,0$.\\
		The \textit{parabolic degree} and \textit{parabolic slope} of $(E_*,\varphi_*)$
		are defined to be the corresponding numbers for the underlying parabolic bundle $E_*$.
	\end{definition}
	
	\subsection{Parabolic push-forward and pull-back}\label{parabolic pushforward}
	
	Let $X$ and $Y$ be two irreducible smooth projective curves, and let $\gamma\,:\, 
	Y\,\longrightarrow\, X$ be a finite \'etale Galois morphism. If $F$ is a vector bundle on 
	$Y$ of rank $n$, then $\gamma_*F$ is a vector bundle on $X$ of rank $mn$, where $m$ is the 
	degree of the map $\gamma$. Given a parabolic structure on $F$, there is a natural way to 
	construct a parabolic structure on $\gamma_*F$. We refer to \cite[\S~3]{BM19} for details.
	
	Let us mention a special case of parabolic push-forward which will be used here. For 
	simplicity, first assume that there is only one parabolic point on $X$, i.e., $S\,=\,\{p\}$. 
	Let $m$ be the degree of $\gamma$. Since $\gamma$ is unramified, the inverse image 
	$\gamma^{-1}(p)$ consists of $m$ distinct points of $Y$. Suppose we are given a full-flag 
	parabolic data on $Y$ of rank $n$ with $\gamma^{-1}(p)$ as the set of parabolic points, so that 
	the parabolic structure on a vector bundle $F$ of rank $n$ on $Y$ is of the form 
	\begin{align*}
		q\,\in\, \gamma^{-1}(p),\,\,F_q \,=\, F^q_1\,&\supsetneq\, F^q_2\,\supsetneq \,\cdots\,\supsetneq\, F^q_n
		\,\supsetneq\, F^q_{n+1}\,=\,0,\\
		\alpha^q_1\,&<\,\alpha^q_2\,<\,\cdots\cdots<\,\alpha^q_n\,<\,\alpha^q_{n+1}\,=\,1.
	\end{align*}
	Moreover, we assume that in the entire collection $\{\alpha^q_j\,\mid\,\,q\,\in\,\gamma^{-1}(p),\, \,1\,\leq\, j\,\leq\, n\}$
	all numbers are distinct.
	We shall construct, from this data, a full-flag parabolic structure on $E\,=\,\gamma_*F$ at the point $p$. Note that
	\[E_p\,=\,\bigoplus_{q\,\in\, \gamma^{-1}(p)}F_{q}\,.\]
	Let $$\beta^p_1\,<\,\beta^p_2\,<\,\cdots\,<\,\beta^p_{mn}$$ be the increasing sequence of length $mn$
	obtained by ordering the numbers $\{\alpha^q_j\,\mid\,\,q\,\in\,\gamma^{-1}(p),\,1\,\leq\, j\,\leq\, n\}$. For each integer
	$1\,\leq\, k\,\leq\, mn$, define the subspace
	$$E^p_k \,:=\, \bigoplus_{q\in\gamma^{-1}(p)}F^q_{\omega(q,k)}\, \subset\, E_p\,,$$ 
	where $\omega(q,k)$, for each point $q$, is the smallest integer $1\,\leq\, j(q)\,\leq\, n$ satisfying the
	condition $\beta^p_k\,\leq\, \alpha^q_{j(q)}$. So we have a filtration of $E_p$ by the subspaces $\{E^p_k\}_{k=1}^{mn}$.
	It is straight-forward to see that $\dim(E^p_k/E^p_{k+1}) \,=\, 1$ for all $1\,\leq\,
	k\, \leq\, mn-1$ and $\dim E^p_{mn}\,=\, 1$. Consequently,
	\begin{align*}
		E_p \,=\, E^p_1&\,\supsetneq\, E^p_2\,\supsetneq\,\cdots\,\supsetneq\, E^p_{mn}\\
		\beta^p_1&\,<\,\beta^p_2\,<\,\cdots\cdots\,<\,\beta^p_{mn}
	\end{align*}
	is a full-flag parabolic structure of rank $mn$ at $p$. Finally, for multiple parabolic points on $X$, we perform exactly
	the same construction individually for each parabolic point to define the parabolic push-forward.
	
	Let $\gamma\,:\,Y\,\longrightarrow\, X$ be as above. If $E_*$ is a parabolic bundle on $X$, then $\gamma^*E$ has an induced
	parabolic structure as follows: let $S\,\subset\, X$ be the parabolic points for $E$; we define $\gamma^{-1}(S)$ as the set
	of parabolic points for $\gamma^*E$, and for any $p\in S$ and $q\in \gamma^{-1}(p)$, give the fiber
	$(\gamma^*E)_q\,=\,E_p$ the same parabolic structure as $E_p$. 
	
	The direct image and pull-back of parabolic Higgs bundles is also well-defined in this setting. Since $\gamma$ is \'etale,
	we have $K_Y\,\simeq\, \gamma^* K_X$. Thus if $\varphi\,:\, E_*\,\longrightarrow \,E_*\otimes K_X(D)$ is a Higgs field on $E_*$
	on $X$, then $$\gamma^*(\varphi_*)\,:\, \gamma^*(E_*)\,\longrightarrow \,\gamma^*(E_*)\otimes K_Y(\gamma^*D)$$ is a
	Higgs field on the pull-back $\gamma^*(E_*)$. Clearly, if $\varphi_*$ is  parabolic, then $\gamma^*(\varphi_*)$ is  parabolic as well.
	
	Similarly, if $\phi_*\,:\, F_*\,\longrightarrow\, F_*\otimes K_Y(\gamma^*D)$ is a Higgs field for $F_*$, then apply
	push-forward followed by projection formula:
	$$\gamma_*(\phi_*)\,:\,\gamma_*(F_*)\,\longrightarrow\, \gamma_*(F_*\otimes K_Y(\gamma^*(D))\,\simeq\, \gamma_* (F_*)\otimes K_X(D)$$
	and consider $\gamma_*(\phi_*)$ as a Higgs field on $\gamma_*(F_*)$, where $\gamma_*(F_*)$ has the induced parabolic structure from
	$F_*$ as described above. For a parabolic point $p$, if $\gamma^{-1}(p) \,=\, \{q_1,\,q_2,\,\cdots,\,q_m\}$, then 
	$$(\gamma_*\phi)_p \,=\, \bigoplus_{i=1}^{m} \phi_{q_i}\,\,.$$
	It follows easily from this description that if the Higgs field $\phi_*$ is parabolic then $\gamma_*(\phi_*)$ is parabolic as well.
	
	\section{The fixed point loci}\label{fixed point loci section}
	
	We fix a rank $r$, a subset of parabolic points of $X$
	and a line bundle $\xi$ of degree $d$ on $X$. 
We adopt the following notation:
	\begin{align*}
		\MHalpha \,:=\, &\,\,\text{Moduli space of stable  parabolic}\,{\rm SL}(r,{\mathbb C})-\text{Higgs bundles of rank}\,\,r,\,\,\text{fixed}\\ &\text{determinant}\,\,\xi\,\,\text{and weights}\,\,\alpha\,\,\text{of full-flag type on}\,\,X.\\
		\Gamma \,:=\, &\,\,\{L\,\in \,\text{Pic}^0(X)\,\,\mid\,\,L^r\,\simeq\, \mathcal{O}_X\}.
	\end{align*}
	We aim to study the connected components of the fixed point loci of the automorphism of $\MHalpha$
	defined by tensoring with any $r$-torsion line bundle $\eta$. The following result in linear algebra which will be used.
	
	\begin{lemma}\label{linear algebra result}
		Let $\psi\, \in\, {\rm GL}(V)$ be a diagonalizable automorphism of a vector space $V$ of dimension $r$ equipped with a filtration of subspaces
		\[V\,=\,V_r\,\supsetneq\, V_{r-1}\,\supsetneq\, V_{r-2}\,\supsetneq\, \cdots\,
		\supsetneq\, V_1\,\supsetneq\, 0\]
		such that $\psi(V_i)\,=\, V_i\,\,\, \forall\,\,\, 1\, \leq\, i\, \leq\, r$.
		Then there exists a basis of of $V$ consisting of eigenvectors
		$\{v_1,\,v_2,\,\cdots,\,v_r\}$ of $\psi$ such that
		$V_j\,=\,\langle v_1,\,\cdots,\, v_j\rangle\,\,\, \forall \,\,\, 1\, \leq\, j\, \leq\, r$.
	\end{lemma}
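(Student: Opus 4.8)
The plan is to prove this by induction on $r$, using the fact that $\psi$ is diagonalizable to decompose $V$ into eigenspaces and then intersecting that decomposition with the given flag. First I would pick any nonzero $v_1 \in V_1$; since $\psi(V_1) = V_1$ and $\dim V_1 = 1$, the vector $v_1$ is automatically an eigenvector of $\psi$, and $V_1 = \langle v_1 \rangle$. This handles the base case $r = 1$ and also the first step of the flag.

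For the inductive step, I would consider the quotient space $\overline{V} := V/V_1$ together with the induced filtration $\overline{V}_i := V_i/V_1$ for $2 \le i \le r$, which is a complete flag of the $(r-1)$-dimensional space $\overline{V}$. Since $\psi$ preserves $V_1$ and each $V_i$, it descends to an automorphism $\overline{\psi}$ of $\overline{V}$ preserving this flag; moreover $\overline{\psi}$ is still diagonalizable because $\psi$ is (its minimal polynomial divides that of $\psi$, which is squarefree). By the induction hypothesis there is a basis $\{\overline{w}_2, \ldots, \overline{w}_r\}$ of $\overline{V}$ consisting of eigenvectors of $\overline{\psi}$ with $\overline{V}_j = \langle \overline{w}_2, \ldots, \overline{w}_j\rangle$ for all $j$. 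The remaining task is to lift each $\overline{w}_j$ to an honest eigenvector $v_j \in V$ of $\psi$ lying in $V_j$.

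The lifting is where the only real content lies, and it is the step I expect to require the most care. Fix $j$ and let $\mu_j$ be the eigenvalue of $\overline{\psi}$ on $\overline{w}_j$. Choose any lift $u \in V_j$ of $\overline{w}_j$; then $\psi(u) - \mu_j u$ maps to $\overline{\psi}(\overline{w}_j) - \mu_j \overline{w}_j = 0$ in $\overline{V}$, so $\psi(u) - \mu_j u \in V_1 = \langle v_1\rangle$, say $\psi(u) - \mu_j u = c\, v_1$. If $\mu_j \ne \lambda_1$ (the eigenvalue on $v_1$), set $v_j := u + \frac{c}{\mu_j - \lambda_1} v_1 \in V_j$; a direct check gives $\psi(v_j) = \mu_j v_j$. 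If instead $\mu_j = \lambda_1$, then diagonalizability of $\psi$ forces $c = 0$ already: otherwise $\psi$ would have a nontrivial Jordan block on $\langle v_1, u\rangle$ for the eigenvalue $\lambda_1$, contradicting semisimplicity. In that case we simply take $v_j := u \in V_j$. Either way $v_j$ is an eigenvector in $V_j$ projecting to $\overline{w}_j$, and since $v_j \in V_j$ while $\langle v_1, \ldots, v_{j-1}\rangle$ already spans a complement of $V_1$-dimension-$(j-1)$ picture inside $V_j$, induction on the flag gives $V_j = \langle v_1, \ldots, v_j\rangle$. Collecting the $v_j$ yields the desired eigenbasis, completing the induction.

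An alternative, more conceptual route avoiding the quotient would be: write $V = \bigoplus_\lambda V(\lambda)$ for the eigenspace decomposition of $\psi$, and argue that each $V_i$ is a direct sum $\bigoplus_\lambda (V_i \cap V(\lambda))$ because $V_i$ is $\psi$-stable and $\psi|_{V_i}$ is again diagonalizable (restriction of a semisimple operator to an invariant subspace is semisimple). Then the jumps of the flag distribute among the eigenspaces, and one chooses, at each jump $V_j/V_{j-1}$, a vector from the unique eigenspace where the jump occurs. I would probably present the inductive version as the main proof since the lifting argument makes the role of diagonalizability transparent, but I would keep the eigenspace-decomposition observation in mind as a sanity check on the final basis.
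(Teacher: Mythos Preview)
Your proof is correct but follows a different path from the paper's. The paper argues directly by going \emph{up} the flag: since $\psi$ is diagonalizable (hence semisimple), every $\psi$-invariant subspace admits a $\psi$-invariant complement; applying this with $W=V_j$ inside $V_{j+1}$ yields a one-dimensional $\psi$-invariant complement, and any nonzero vector in it is the desired eigenvector $v_{j+1}$. No quotients and no case analysis on eigenvalues are needed. Your approach instead passes to the quotient $V/V_1$, applies induction on the dimension, and then lifts the resulting eigenbasis back to $V$ via an explicit computation splitting into the cases $\mu_j\neq\lambda_1$ and $\mu_j=\lambda_1$. Both arguments ultimately rest on the same fact (semisimplicity of $\psi$), but the paper's invariant-complement version is shorter and avoids the lifting bookkeeping, while your version makes the obstruction to lifting---a potential Jordan block---completely explicit. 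Your ``alternative route'' via $V_i=\bigoplus_\lambda(V_i\cap V(\lambda))$ is in fact closest in spirit to the paper's argument, since it is another way of expressing that restrictions of $\psi$ to invariant subspaces remain diagonalizable. One minor comment: the sentence beginning ``since $v_j\in V_j$ while $\langle v_1,\ldots,v_{j-1}\rangle$ already spans\ldots'' is garbled; what you need there is simply that $v_1,\ldots,v_j$ lie in $V_j$ and are linearly independent (their images in $V/V_1$ are $0,\overline{w}_2,\ldots,\overline{w}_j$), hence span the $j$-dimensional space $V_j$.
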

	
	\begin{proof}
		Since $\psi$ is diagonalizable, for any subspace $W\, \subset\, V$ such that
		$\psi(W)\,=\, W$, the restriction of $\psi$ to $W$ is diagonalizable and, moreover, since diagonalizable maps are semisimple, there
		is a subspace $W'\, \subset\, V$ satisfying the conditions that $\psi(W')\,=\, W'$ and
		$V\,=\, W\bigoplus W'$. Choose any basis vector $v_1$ for $V_1$. Suppose $\{v_1,\,v_2,\,\cdots,\,v_j\}$ has been chosen
		satisfying the hypothesis up to some $j$ with $j\,\leq\, r-1$. Then there exists a vector $v_{j+1}\,\in\, V_{j+1}$ such that
		$V_{j+1}\,=\,\langle v_{j+1}\rangle \bigoplus V_j$. Now $\{v_1,\,\cdots,\,v_{j+1}\}$ satisfy the hypothesis till $j+1$. Repeating this process, the lemma follows.
	\end{proof}
	
	The group $\Gamma$ acts on $\MHalpha$. The action of $\eta\,\in\,\Gamma$ sends a parabolic Higgs bundle
	$(E_*,\,\varphi_*)$, where $\varphi_*\,:\,E_*\,\longrightarrow\, E_*\otimes K_X(D)$ is a Higgs field, to the
	parabolic Higgs bundle $$(E_*\otimes\eta,\,\varphi_*\otimes\text{Id}_{\eta})\,.$$
	A parabolic Higgs bundle $(E_*,\,\varphi_*)$ is a \textit{fixed point} under this action of $\eta$ if there exists an
	isomorphism of parabolic Higgs bundles between $(E_*,\,\varphi_*)$ and $(E_*\otimes\eta,\,\varphi_*\otimes\text{Id}_{\eta})$, namely
	a parabolic isomorphism
	$$\psi_* \,:\, E_*\,\simeq\, E_*\otimes\eta$$
	which is compatible with the Higgs fields in the sense that the following diagram commutes:
	\begin{align*}
		\xymatrix@=1.8cm{ E_*\ar[r]^{\psi_*} \ar[d]_{\varphi_*} & E_*\otimes \eta \ar[d]^{\varphi_*\otimes{\rm Id}_\eta} \\
			E_*\otimes K_X(D) \ar[r]^(0.45){\psi_*\otimes{\rm Id}_{K_X(D)}} & E_*\otimes K_X(D)\otimes \eta
		}
	\end{align*}
	
	For $\eta\,\in \,\Gamma\setminus\mathcal{O}_X$, let 
	$\MHetaalpha\,\subset\,\MHalpha$ be the locus of fixed points for the action of $\eta$ on $\MHalpha$. 
	If $m\,=\,\text{ord}(\eta)$, choosing a nonzero section $s_0\, \in\, H^0(X,\, \eta^{\otimes m})$,
	define the spectral curve
	\begin{equation}\label{ye}
		Y_\eta\, :=\, \{v\, \in\, \eta\,\, \mid\,\, v^{\otimes m}\, \in\, s_0(X)\}\, .
	\end{equation}
	The natural projection $\gammaeta\,:\,Y_\eta\,\longrightarrow\, X$ is an \'etale Galois covering
	with Galois group ${\mathbb Z}/m{\mathbb Z}$. The isomorphism class of this
	covering $\gammaeta$ does not depend on the choice of the section $s_0$.
	
	\subsection{Description of fixed point loci as quotients}\label{estimate}
	
	Given a set $\alpha\,=\,\{\alpha_1,\,\alpha_2,\,\cdots,\, \alpha_r\}$ of $r$ distinct 
	elements, and a divisor $m$ of $r$, let $ {\bf P}(\alpha)$ denote the set of all possible partitions of its elements into 
	$m$ subsets, each containing $l\,=\,r/m$ elements. Clearly, we have $| {\bf P}(\alpha)|\,=\,
	{r\choose l}{r-l \choose l}{r-2l\choose l}\cdots {l\choose l} = \dfrac{r!}{(l!)^{m}}.$
	
	Consider the spectral curve $\gammaeta\,:\, Y_\eta\,\longrightarrow \,X$ in \eqref{ye}. Given a full-flag 
	parabolic data of rank $r$ at the parabolic points $S$ of $X$, and an element of ${\bf P}(\alpha)$,
	we would like to describe a full-flag parabolic data of rank $l$ on $\gammaeta^{-1}(S)$.
	
	For simplicity, let us start with the case of a single parabolic point $S\,=\,\{p\}$. So, 
	we are given a full-flag parabolic data of rank $r$ at $p$. Let $\alpha$ denote its set of 
	weights. Denote $\mu\,=\, \exp(2\pi\sqrt{-1}/m)$, and we have
	$$\textnormal{Gal}(\gammaeta)\,=\,\{1, 
	\,\mu,\,\mu^2,\,\cdots,\,\mu^{m-1}\}\,\subset \,{\mathbb C}^*\,=\, {\mathbb C}\setminus\{0\}.$$ The action of of 
	$\textnormal{Gal}(\gammaeta)$ on $\gammaeta^{-1}(p)$ is via multiplication of ${\mathbb C}^*$ on
	$\eta$. Using this, we can 
	define a full-flag parabolic data at the points of $\gammaeta^{-1}(p)$ as follows: fix 
	an ordering on the points of $\gammaeta^{-1}(p)$, say 
	$\gammaeta^{-1}(p)\,=\,\{q_1,\,q_2,\,\cdots,\,q_m\}$, such that $\mu^i$ acts on 
	$\gammaeta^{-1}(p)$ as the cyclic permutation sending $q_j$ to $q_{j+i}$\,, where the 
	subscript $(j+i)$ is to be understood mod $m$. For $\textbf{t}\,\in\,  {\bf P}(\alpha)$, suppose \[\alpha 
	\,=\, \coprod_{j=1}^{m} \Lambda_j \] be the partition of the set of weights $\alpha$ 
	according to $\textbf{t}$. Clearly each $\Lambda_j$ can be arranged into an increasing sequence of 
	length $l$. We designate $\Lambda_j$ as the set of weights at $q_j$ for each $1\,\leq\, 
	j\,\leq\, m$. This gives a full-flag parabolic data of rank $l$ at the points of 
	$\gammaeta^{-1}(p)$. Finally, for multiple parabolic points, say $S\,= 
	\,\{p_1,\,\cdots,\,p_s\}$ and $\gammaeta^{-1}(p_i)
	\,=\,\{q_{1i},\,q_{2i},\,\cdots,\,q_{mi}\}$, 
	perform the same procedure as above for each $p_i$ to get the parabolic structure 
	on $\gammaeta^{-1}(p_i)$. In particular, the number of possible parabolic data on $\gammaeta^{-1}(S)$ is
	$| {\bf P}(\alpha)|\, = \, \left({r\choose l}{r-l \choose l}{r-2l\choose l}\cdots {l\choose l}\right)^s = \left(\dfrac{r!}{(l!)^{m}}\right)^s.$
	
	For each $\textbf{t}\,\in\,  {\bf P}(\alpha)$, let $M^\textbf{t}_{Y_\eta,H}(l,d)$ denote the moduli space of
	stable  parabolic ${\rm GL}(l,{\mathbb C})$-Higgs bundles over $Y_\eta$ of  degree $d$ and having full-flag
	parabolic structures at the points of $\gammaeta^{-1}(p)$ according to $\textbf{t}$ as described above. Let
	$\mathcal{N}^\textbf{t}_\eta\,\subset\, M_{Y_\eta,H}^{\textbf{t}}(n,d)$ denote the subvariety consisting of all those Higgs
	bundles $(F_*,\phi_*)$ over $Y_\eta$ such that $\det (\gammaeta_*F)\,\simeq\, \xi$ and $tr(\gammaeta_*(\phi))\,=\,
	0$. Define $$\mathcal{N}_\eta\,=\,\underset{\textbf{t}\in  {\bf P}(\alpha)}{\coprod}\mathcal{N}^\textbf{t}_\eta.$$
	
	\begin{lemma}\label{parabolic fixed point}
		There is a surjective morphism $f\,:\,\mathcal{N}_\eta\,\longrightarrow\, \MHetaalpha$ given by parabolic
		pushforward by the \'etale map $\gammaeta$.
	\end{lemma}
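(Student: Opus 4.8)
The plan is to build $f$ and prove surjectivity in two steps: first, that parabolic push-forward along the étale Galois covering $\gammaeta$ sends $\mathcal N_\eta$ into the $\eta$-fixed locus $\MHetaalpha$ and does so in families; second, that every $\eta$-fixed stable parabolic Higgs bundle arises, together with its full parabolic datum, as such a push-forward. The recurring geometric inputs are that $\gammaeta$ is étale — so $\gammaeta^*K_X=K_{Y_\eta}$ and $\gammaeta^*D$ is the reduced divisor $\gammaeta^{-1}(S)$ — and that $\gammaeta^*\eta\simeq\mathcal O_{Y_\eta}$, since the tautological section $v$ of $\gammaeta^*\eta$ satisfies $v^{\otimes m}=\gammaeta^*s_0$ and is therefore nowhere vanishing, where $m=\operatorname{ord}(\eta)$.

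\textbf{Construction of $f$.} Given $\mathbf t\in\mathbf P(\alpha)$ and $(F_*,\phi_*)\in\mathcal N^{\mathbf t}_\eta$, set $(E_*,\varphi_*):=(\gammaeta_*F_*,\gammaeta_*\phi_*)$ with the parabolic structure of \S\ref{parabolic pushforward}. By the very construction of that structure, the weights of $E_*$ at each $p_i$ are the weights of $F_*$ over $p_i$ re-sorted, hence equal $\alpha$; and $\det E\simeq\xi$, $\operatorname{tr}\varphi=0$ by the definition of $\mathcal N^{\mathbf t}_\eta$. To see that $(E_*,\varphi_*)$ is stable I would use that push-forward along a finite étale map preserves parabolic Higgs semistability: were the push-forward unstable, the top term of its parabolic Higgs Harder--Narasimhan filtration would, via the adjunction $\operatorname{Hom}(-,\gammaeta_*F_*)\simeq\operatorname{Hom}(\gammaeta^*(-),F_*)$ together with preservation of semistability under $\gammaeta^*$ (whose Harder--Narasimhan filtration is Galois-invariant, hence descends), give a nonzero Higgs morphism from a semistable sheaf of strictly larger slope into the semistable $F_*$ — impossible; and for the generic weights $\alpha$ on $X$, parabolic Higgs semistability with fixed determinant and weights is equivalent to stability. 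Finally $\gammaeta^*\eta\simeq\mathcal O_{Y_\eta}$ and the projection formula yield a parabolic isomorphism $E_*\otimes\eta\simeq\gammaeta_*(F_*\otimes\gammaeta^*\eta)\simeq\gammaeta_*F_*=E_*$ intertwining the Higgs fields, so $(E_*,\varphi_*)\in\MHetaalpha$. That $(F_*,\phi_*)\mapsto(\gammaeta_*F_*,\gammaeta_*\phi_*)$ is functorial in families — hence a morphism of moduli spaces — is the relative version of the construction in \cite[\S3]{BM19}.

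\textbf{Surjectivity, descent part.} Given $(E_*,\varphi_*)\in\MHetaalpha$, choose a parabolic Higgs isomorphism $\psi_*:E_*\xrightarrow{\ \sim\ }E_*\otimes\eta$. Iterating $\psi_*$ $m$ times and composing with $s_0^{-1}$ produces a parabolic automorphism of $E_*$ commuting with $\varphi_*$; as $E_*$ is stable this is a scalar, so after rescaling $\psi_*$ by an $m$-th root of it I may assume the $m$-fold iterate of $\psi_*$ equals $s_0\cdot\operatorname{id}$. This datum is exactly a module structure on $E$ over the sheaf of $\mathcal O_X$-algebras $\mathcal A:=\bigoplus_{i=0}^{m-1}\eta^{-i}$ with multiplication twisted by $s_0$, whose relative spectrum over $X$ is $Y_\eta$ and for which $\mathcal A=\gammaeta_*\mathcal O_{Y_\eta}$. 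Since $\gammaeta$ is affine and étale, $E$ descends to a rank-$l$ vector bundle $F$ on $Y_\eta$ with $\gammaeta_*F\simeq E$; and the commuting square defining a fixed point says precisely that $\varphi_*$ is $\mathcal A$-linear, so it descends to $\phi:F\to F\otimes\gammaeta^*K_X(D)=F\otimes K_{Y_\eta}(\gammaeta^*D)$ with $\gammaeta_*\phi=\varphi$. That $(F_*,\phi_*)$ is itself stable follows because a $\phi$-invariant parabolic subbundle of maximal slope would push forward to a $\varphi$-invariant parabolic subsheaf of $E$ of parabolic slope $\geq\mathrm{par}\mu(E_*)$, contradicting stability of $E_*$.

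\textbf{Surjectivity, parabolic part — the main obstacle.} It remains to equip $F$ with a full-flag parabolic structure at $\gammaeta^{-1}(S)$ of some type $\mathbf t\in\mathbf P(\alpha)$ whose parabolic push-forward recovers the given $E_*$; this is where the real work lies. Fix $p\in S$. Since $\gammaeta$ is étale, $E_p=\bigoplus_{q\in\gammaeta^{-1}(p)}F_q$, and as the summands $F_q$ are the images of the idempotents of the commutative algebra $\mathcal A_p$ while $\psi_p$ is multiplication by an element of $\mathcal A_p$, this is exactly the eigenspace decomposition of $\psi_p$, the $m$ distinct eigenvalues being the $m$-th roots of $s_0(p)\neq0$. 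Because $\psi_*$ is parabolic, $\psi_p$ preserves the full flag of $E_*$ at $p$, so Lemma \ref{linear algebra result} supplies an eigenbasis $v_1,\dots,v_r$ of $E_p$ adapted to that flag. Recording, for each $q$, which of the $v_k$ — equivalently which of the weights $\alpha^p_k$ — lie in $F_q$ splits the ordered weight set $\alpha$ at $p$ into $m$ blocks of size $l=r/m$; the collection of these block decompositions over all $p\in S$ is the sought $\mathbf t\in\mathbf P(\alpha)$, and the adapted sub-bases give each $F_q$ a full flag whose weights over $\gammaeta^{-1}(p)$ are distinct, as required. A direct comparison with the merging recipe of \S\ref{parabolic pushforward} then shows that $\gammaeta_*F_*$ is $E_*$ with its original parabolic structure, so $(F_*,\phi_*)\in\mathcal N^{\mathbf t}_\eta\subset\mathcal N_\eta$ and $f(F_*,\phi_*)=(E_*,\varphi_*)$. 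The genuine difficulty is concentrated in this last step, together with the semistability bookkeeping of the first: matching the intrinsic flag produced by Lemma \ref{linear algebra result} against the combinatorial reconstruction of parabolic flags under push-forward, and verifying that genericity of $\alpha$ on $X$ indeed forces stability of $\gammaeta_*F_*$.
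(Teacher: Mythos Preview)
Your proof is correct and follows the same broad architecture as the paper's: construct $f$, verify the image lands in the fixed locus via the tautological trivialisation of $\gammaeta^*\eta$, then for surjectivity descend $E$ to $F$ on $Y_\eta$, equip $F$ with a parabolic structure using Lemma~\ref{linear algebra result}, and descend the Higgs field. There are, however, two genuine differences in execution worth flagging.

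\emph{Stability of the push-forward.} You argue semistability via Harder--Narasimhan and adjunction, then invoke genericity of $\alpha$ to upgrade to stability. This is valid (genericity is a numerical condition on weights and so rules out equal-slope $\varphi$-invariant subsheaves as well), but the paper instead proves stability directly: it shows the Galois conjugates $\{\sigma^*(F_*,\phi_*)\}$ are pairwise non-isomorphic (because the weights at distinct preimages are collectively distinct), and uses this to conclude that any equal-slope $\varphi$-invariant subobject of $E_*$ must pull back to all of $\gammaeta^*E_*$. The paper's route has a payoff: the non-isomorphism of Galois conjugates is exactly what drives Corollary~\ref{quotient} (fibres of $f$ are $\mathrm{Gal}(\gammaeta)$-orbits), which your argument does not establish along the way.

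\emph{Descent.} You phrase the descent as an $\mathcal A$-module structure on $E$ for $\mathcal A=\gammaeta_*\mathcal O_{Y_\eta}$; the paper instead pulls $\psi$ back to $Y_\eta$, composes with the tautological trivialisation to get an endomorphism $\widetilde\phi$ of $\gammaeta^*E$, and decomposes into generalised eigenspace sub-bundles (invoking \cite[Lemma~2.1]{BH10}). These are equivalent pictures; your version is more conceptual, theirs makes the fibrewise eigenspace decomposition $E_p=\bigoplus F_{q_i}$ visible more explicitly, and they are slightly more careful in passing to the semisimple part before applying Lemma~\ref{linear algebra result} (unnecessary, since the $m$ eigenvalues are distinct, but not wrong). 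One small omission on your side: you do not explicitly check that the descended $\phi$ is a \emph{parabolic} morphism for the structure $F_*$ you have built; the paper notes this follows from the nature of the push-forward construction in \S\ref{parabolic pushforward}.
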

	
	\begin{proof}
		First, assume for simplicity that there is only one parabolic point $S\,=\,\{p\}$.
		
		The map $f$ sends any $(F_*,\, \phi_*)$ to the parabolic pushforward $\gammaeta_*((F_*,\, \phi_*))$.
		We claim that $f((F_*,\, \phi_*))$ is a parabolic semistable Higgs bundle.
		
		To prove this, if $(E_*,\,\varphi_*)\,= \,f((F_*,\,\phi_*))$, we have 
		\begin{align}\label{eqn:pullback-isomorphism}
			\gammaeta^*((E_*,\,\varphi_*))
		\,\cong\, \bigoplus_{\sigma\in \textnormal{Gal}(\gammaeta)} \sigma^*((F_*,\,\phi_*))\ ,
		\end{align} 
		where $\sigma^*F_*$ has the
		obvious parabolic Higgs bundle structure coming from $(F_*,\,\phi_*)$, induced by pulling back via $\sigma$. It is also clear
		that $\text{par}\mu(\sigma^*F_*)\,=\,\text{par}\mu(F_*)$ for all $\sigma$. Consequently, $\gammaeta^*((E_*,\,\varphi_*))$ is a
		direct sum of stable  parabolic Higgs bundles of same parabolic slope, which implies that $\gammaeta^*((E_*,\,\varphi_*))$ is a
		semistable  parabolic Higgs bundle. From this it follows that 
		$(E_*,\,\varphi_*)$ must be parabolic semistable as well, since any $\varphi_*$-invariant
		sub-bundle $E'_*\,\subset\, E_*$ of strictly larger parabolic slope would give rise to a $\gammaeta^*(\varphi_*)$-invariant
		sub-bundle $\gammaeta^*(E'_*)\,\subset\, \gammaeta^*(E_*)$ of strictly larger parabolic slope, contradicting the
		parabolic semistability of $\gammaeta^*((E_*,\,\varphi_*))$. This proves the claim that $(E_*,\,\varphi_*)$ is parabolic semistable.
		
		\textit{Moreover, $(E_*,\,\varphi_*)$ is actually parabolic stable}. To see this, take any non-trivial $\varphi_*$-invariant
		sub-bundle $E'_*\,\subset\, E_*$ such that $\textnormal{par}\mu(E'_*)\,=\, \textnormal{par}\mu(E_*)$. Then
		$$\gammaeta^*(E'_*)\,\subset\, \gammaeta^*(E_*)\,=
		\,\underset{\sigma\in \textnormal{Gal}(\gammaeta)}{\bigoplus} \sigma^*(F_*)$$
		is a $\gammaeta^*(\varphi_*)$-invariant sub-bundle with same parabolic slope. The parabolic
		Higgs field on $\gammaeta^*(E'_*)$ induced by $\gammaeta^*(\varphi_*)$ will also be denoted by $\gammaeta^*(\varphi_*)$.
		
		Now, it is clear that $(\gammaeta^*(E'_*),\,\gammaeta^*(\varphi_*))$ is also parabolic semistable. 
		Thus, it contains a parabolic stable Higgs bundle $F'_*$ equipped with parabolic Higgs field
		induced by $\gammaeta^*(\varphi_*)$, and having the same parabolic slope as $\gamma_\eta^*(E_*')$. Next note that the parabolic Higgs bundles 
		$$\{\sigma^*(F_*,\,\phi_*)\,\mid\,\sigma\in\textnormal{Gal}(\gammaeta)\}$$ are mutually 
		non-isomorphic. Indeed, otherwise there would exist a parabolic isomorphism $\widetilde f$ 
		from $(F_*,\,\phi_*)$ to $\sigma^*(F_*,\,\phi_*)$ for some $\sigma\,\in\, {\rm 
			Gal}(\gamma)\setminus\{e\}$. Such an isomorphism would preserve the filtrations of $F_y$ 
		and $F_{\sigma(y)}$ at a parabolic point $y$. But since the weights of $F_*$ at the 
		parabolic points $y$ and $\sigma(y)$ are collectively distinct, and a parabolic 
		isomorphism preserves weights, this leads to a contradiction. We thus see that all 
		projections $(F'_*,\,\gammaeta^*(\varphi_*))\,\longrightarrow\, \sigma^*(F_*,\phi_*)$ except one
		$\sigma$ must be zero, so we have $F'_*\,=\, \sigma^*(F_*,\phi_*)$ for some $\sigma$.
		Since $\gammaeta^*(E'_*)$ is $\textnormal{Gal}(\gammaeta)$-equivariant, and it contains
		$F'_*\,=\, \sigma^*(F_*,\phi_*)$, we conclude that $\gammaeta^*(E'_*)\,=\, \gammaeta^*(E_*)$. This clearly implies that 
		$E'_*\,=\,E_*$. Thus we have $f((F_*,\,\phi_*))\,\in\,\MHalpha$.
		
		Next, we argue that $\textnormal{Im}(f)\,\subseteq\,\MHetaalpha$. Take any $(E_*,\,\varphi_*)\,=\,f(F_*,\,\phi_*)$. There exists a tautological
		trivialization of the line bundle $\gammaeta^*\eta$ over $Y_\eta$, which induces an isomorphism 
		\begin{align}\label{tautological trivialization}
			\theta\,:\, \mathcal{O}_{Y_\eta}\,\longrightarrow\, \gammaeta^*\eta\,.
		\end{align}  For each $1\,\leq\, i\,\leq\, m$, the map
		$\theta_{q_i}\,:\, \mathbb{C}\,\longrightarrow \,(\gammaeta^*\eta)_{q_i}\,=\,\eta_p$
		(see \eqref{tautological trivialization}) is given by
		$\lambda \,\longmapsto\, \lambda q_i.$ It produces an isomorphism
		\begin{align*}
			{\rm Id}_F\otimes \theta \,:\, F\,\xrightarrow{\,\,\simeq\,\,}\, F\otimes \gammaeta^*\eta.
		\end{align*}
		Taking its direct image, and using the projection formula, the following isomorphism is obtained:
		$$
		\psi\, :=\, \gammaeta_*({\rm Id}_F\otimes\theta)\, :\,\,
		E\,=\, \gammaeta_*F\, \longrightarrow\, \gammaeta_*(F\otimes \gammaeta^*\eta)
		\,=\, (\gammaeta_*F)\otimes \eta\,=\, E\otimes \eta.
		$$
		Now $E_p\,=\, \bigoplus_{i=1}^{m}F_{q_i}$, and the map $\psi_p\,:\,E_p\,
		\longrightarrow\, E_p\otimes \eta_p$ on the fiber takes $F_{q_i}$ to
		$F_{q_i}\otimes \eta_p$, which clearly implies that 
		$\psi_p$ preserves the filtration induced on $E_p$. Thus $\psi$ is a parabolic
		isomorphism. Moreover, $\theta$ in \eqref{tautological trivialization} evidently commutes with the Higgs field $\phi$, which
		shows that $\psi$ commutes with $\gammaeta_*(\phi)$. Consequently, we have
		$\text{Im}(f)\,\subseteq\, \MHetaalpha.$
		
		To prove that $f$ is surjective on $\MHetaalpha$, take any $(E_*,\,\varphi_*)\,\in\, \MHetaalpha$. Recall from
		the earlier discussion that this means that there exists an isomorphism
		\[\widetilde{\psi_*}\,:\, E_*\,\simeq\, E_*\otimes \eta\]
		of parabolic bundles which is compatible with $\varphi_*$, so that the diagram below commutes:
		\begin{align*}\label{commuting diagram for fixed point}
			\xymatrix@=1.8cm{ E_*\ar[r]^{\widetilde{\psi_*}} \ar[d]_{\varphi_*} & E_*\otimes \eta \ar[d]^{\varphi_*
					\otimes{\rm Id}_\eta} \\
				E_*\otimes K_X(D) \ar[r]^(0.45){\widetilde{\psi_*}\otimes{\rm Id}_{K_X(D)}} & E_*\otimes K_X(D)\otimes \eta
			}
		\end{align*}
		
		We need to show that there exists $(F_*,\,\phi_*)
		\,\in\, \mathcal{N}_\eta$ such that $f((F_*,\,\phi_*))\,\simeq\, (E_*,\,\varphi_*)$.
		
		Since $(E_*,\,\varphi_*)$ is a parabolic stable Higgs bundle, it is simple, and hence
		any parabolic endomorphism of $(E_*,\,\varphi_*)$ which commutes with the Higgs field
		is in fact a constant scalar multiplication. As a consequence, any two parabolic isomorphisms from $E_*$ to $E_*\otimes \eta$ which commute with $\varphi_*$ will differ by a constant scalar. Thus, we can re-scale $\widetilde{\psi_*}$ by multiplying with a nonzero scalar, so that the $m$--fold composition 
		\[\underset{m-times}{\widetilde{\psi_*}\,\circ\cdots\circ\,\widetilde{\psi_*}}\,:\, E_*\,\longrightarrow
		\,E_*\otimes \eta^m\]
		coincides with $\textnormal{Id}_{E_*}\otimes s_0$, where $s_0$ is the chosen nowhere-vanishing section of $\eta^m$. This gives
		\[\underset{m-times}{\widetilde{\psi}\,\circ\cdots\circ\,\widetilde{\psi}}\,=\, \textnormal{Id}_E\otimes s_0
		\,:\, E \,\longrightarrow\, E\otimes \eta^m\]
		on the underlying bundles. Then, the argument given in the proof of
		\cite[Lemma 2.1]{BH10} will produce a vector bundle $F$ of rank $l=r/m$ on $Y_\eta$ with
		$\gammaeta_*F\,\cong\, E$. Let us briefly recall the argument for convenience. Consider the pull-back $\gammaeta^*(\widetilde{\psi})$, and compose it with the tautological trivialization of $\gammaeta^*\eta$ to get a morphism 
		\[\widetilde{\phi}\,:\, \gammaeta^*E\,\longrightarrow \,\gammaeta^*E.\] 
		Since $Y_\eta$ is irreducible, the characteristic polynomial of $\widetilde{\phi}_y$ remains unchanged
		as the point $y\,\in\, Y_\eta$ moves. This allows us to decompose $\gammaeta^*E$ into
		generalized eigenspace sub-bundles. If $F$ is an eigenspace sub-bundle of $E$, then
		we have $\gammaeta_*F\,\cong\, E$ due to the observation that the eigenvalues are the powers of the root of unity $\mu$ and the action of the element $\mu^i$ sends the $\lambda$-eigenspace sub-bundle to $\mu^i\lambda$-eigenspace sub-bundle.
		
		Our next task is to produce a parabolic structure on $F$ and a Higgs field $\phi_*$ on the resulting parabolic
		bundle $F_*$ so that the parabolic pushforward $(\gammaeta_*(F_*),\gammaeta_*(\phi_*))$ coincides with $(E_*,\varphi_*)$.
		
		First we produce the parabolic structure. Recall the description of $\theta$ in (\ref{tautological trivialization}), and notice that for any choice of $q\in \gammaeta^{-1}(p)$, the map $\widetilde{\phi}_q$ is precisely the composition
		\begin{align}\label{phi_q}
			(\gammaeta^*E)_q\,=\,E_p\,\xrightarrow{\,\widetilde{\psi}_p\,}\, E_p\otimes \eta_p \,=\, (\gammaeta^*E)_q\otimes
			(\gammaeta^*\eta)_q\,\xrightarrow{\,\,Id\otimes(\theta_q)^{-1}\,} \, (\gammaeta^*E)_q\,,
		\end{align}
		where $\theta_q\,:\, \mathbb{C}\,\longrightarrow \,(\gammaeta^*\eta)_q\,=\,\eta_p$ is
		defined by $\lambda\,\longmapsto\, \lambda q$. Thus, if
		\begin{align*}
			E_p\,=\,E^p_1\,\supsetneq \,E^p_2\,\supsetneq\,\cdots\,\supsetneq\,E^p_r\,\supsetneq\,0
		\end{align*}
		is the given parabolic filtration of the fiber $E_p$, then as $\varphi_*$ is a parabolic isomorphism,
		\begin{align}
			\forall\,j\,\in\,[0,\,r],\,\,\,\{\varphi_p(E^p_j)\,=\,E^p_j\otimes \eta_p\}\,\implies\,\{\phi_q(E^p_j)\,=\,E^p_j\}\,\,\,\,[\text{from}\,(\ref{phi_q})].
		\end{align}
		Let $\widetilde{\phi}^s_q$ be the semisimple part of $\widetilde{\phi}_q$ for its Jordan-Chevalley decomposition. It is well-known that $\widetilde{\phi}^s_q$ can be expressed as a polynomial in $\widetilde{\phi}_q$ without constant coefficient. Thus 
		\[\widetilde{\phi}^s_q(E^p_j) = E^p_j\,\,\,\,\,\forall\,\,\,j\,\in\,[0,\,r-1].\]
		Moreover, the generalized eigenspaces of $\widetilde{\phi}_q$ (namely $F_{q_i}$'s) are the eigenspaces for $\widetilde{\phi}^s_q$.
		Thus $\widetilde{\phi}^s_q\,:\,E_p\,\longrightarrow\, E_p$ and the filtration
		$E_p\,=\,E^p_1\,\supsetneq\,\cdots\,\supsetneq \,E^p_r\,\supsetneq\,0$ allow us to apply Lemma \ref{linear algebra result}, which
		gives us a basis of $E_p$\,\,, say $\{v_1,\,\cdots,\,v_r\}$, such that each $E^k_p = \langle v_k,\cdots, v_r\rangle$ and each $v_j$ is contained in a unique $F_{q_l}$. From this data, we can produce a full-flag parabolic structure on the
		fibers $F_{q_1},\cdots,F_{q_m}$ of $F$ as follows:
		Choose a basis $$B\,=\,\{v_1,\,v_2,\,\cdots,\,v_r\}$$ of $E_p$ as in Lemma \ref{linear algebra result}, and for each $q_i$,
		consider the subset $B_i:\,=\,B\cap F_{q_i}\, \subset\, B$. By symmetry, each $B_i$ consists of $l$ elements and spans $F_{q_i}$. Suppose
		$B_i \,=\, \{v_{i_1},\,v_{i_2},\,\cdots,\,v_{i_l}\}$ with $i_1\,<\,i_2\,<\,\cdots\,<\,i_n.$ Then consider the following weighted
		full-flag filtration of $F_{q_i}$:
		\begin{align*}
			F_{q_i}\,=\,\langle v_{i_1},\,v_{i_2},\,\cdots,\,v_{i_l}\rangle &\,\supsetneq\, \langle v_{i_2},\,\cdots,\,v_{i_l}\rangle \,\supsetneq
			\,\cdots \,\supsetneq \,\langle v_{i_n}\rangle\,\supsetneq\, 0,\\
			\alpha^p_{i_1} &\,<\,\alpha^p_{i_2} \,<\,\cdots \cdots\cdots\cdots\,<\,\alpha^p_{i_l},
		\end{align*}
		where $(\alpha^p_1\,<\,\alpha^p_2\,<\,\cdots\,<\,\alpha^p_r)$ are the
		parabolic weights at $p$. By performing this for all $1\,\leq\, i\,\leq\, m$, we get a parabolic bundle $F_*$ on $Y_\eta$.
		
		Next a Higgs field on $F_*$ will be constructed. As it was seen above,
		$\gammaeta^*(E_*)\,\simeq\, \bigoplus_{\sigma\in {\rm Gal}(\gammaeta)}\sigma^*(F_*)$ is the decomposition of $\gammaeta^*(E_*)$ into generalized eigenspace sub-bundles under the pull-back morphism
		\[\gammaeta^*(\psi_*)\,:\,\bigoplus_{\sigma\in {\rm Gal}(\gammaeta)}\sigma^*(F_*)\,\longrightarrow
		\,\bigoplus_{\sigma\in {\rm Gal}(\gammaeta)}(\sigma^*(F_*)\otimes \gammaeta^*(\eta)).\]
		Since $\gammaeta^*(\varphi_*)$ is compatible with $\gammaeta^*(\psi_*)$, it must
		preserve its generalized eigenspaces, and thus $\gammaeta^*(\varphi_*)$ also decomposes as 
		\[\gammaeta^*(\varphi_*)\,=\,\bigoplus_{\sigma\in {\rm Gal}(\gammaeta)}\phi_{\sigma,*}\]
		where $\phi_{\sigma,*}\,:\,\sigma^*(F_*)\,\longrightarrow\,
		\sigma^*(F_*)\otimes K_{\yeta}(\gammaeta^*D)$ are homomorphisms.
		
		Since $\gammaeta^*(\varphi_*)$ is a ${\rm Gal}(\gammaeta)$-equivariant morphism, it follows that $\phi_{\sigma'\sigma,*}$ is
		the conjugate of $\phi_{\sigma,*}$ by the automorphism $\sigma'$. But $\varphi_*$ is the descent of the
		${\rm Gal}(\gammaeta)$-equivariant morphism $\gammaeta^*(\varphi_*)$, which immediately implies by the uniqueness of
		descent that, upon taking $\sigma\,=\,e$ to be identity in ${\rm Gal}(\gammaeta)$, 
		\[\gammaeta_*(\phi_{e,*})\,=\,\varphi_*.\] 
		Moreover, since $\varphi_*$ is a  parabolic morphism by assumption, it forces each $\phi_{\sigma,*}$ to be
		parabolic as well, due to the nature of its construction from \S~\ref{parabolic pushforward}. Thus $\phi_*\,:= \,
		\phi_{e,*}$ is our candidate for the Higgs field.
		
		Note that $F_*$ must be parabolic stable, because if $F'\,\subset\, F$ is any $\phi$-invariant sub-bundle such that $\text{Par}\mu(F'_*)
		\,\geq\, \text{Par}\mu(F_*)$, then the equalities
		\[\text{Par}deg(\gammaeta_*(F'_*))\,=\,\text{Par}deg(F_*)\,\,\,\,\text{ and }\,\,\,\,\text{rank}(\gammaeta_*(F')) \,=\, m\cdot \text{rank}(F')\]
		would imply that $\gammaeta_*(F')\,\subset\, E$ violates the condition of parabolic stability for $E_*$. Thus $F_*\,\in\,
		\mathcal{N}^\textbf{t}_\eta$ for some $\textbf{t}\,\in\,  {\bf P}(\alpha).$ 
		
		Finally, if the number of parabolic points on $X$ is more than 1, then an exactly similar argument with the obvious modifications will give the result.
	\end{proof}
	
	\begin{corollary}\label{quotient}
		Each fiber of  $f$ coincides with an orbit of ${\rm Gal}({\gammaeta})$. Therefore, via $f$,
		$$\mathcal{N}_\eta/{\rm Gal}({\gammaeta})\,\simeq\, \MHetaalpha.$$
	\end{corollary}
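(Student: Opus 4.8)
The plan is to leverage the two facts established in and around Lemma \ref{parabolic fixed point}: that $f$ is surjective onto $\MHetaalpha$, and that the fibers of $f$ are exactly the $\mathrm{Gal}(\gammaeta)$-orbits. The first step is to make precise the $\mathrm{Gal}(\gammaeta)$-action on $\mathcal{N}_\eta$. The Galois group acts on $Y_\eta$, hence on Higgs bundles over $Y_\eta$ by pullback $(F_*,\phi_*)\mapsto \sigma^*(F_*,\phi_*)$; I need to check that this action preserves $\mathcal{N}_\eta$, i.e.\ that $\sigma^*$ sends a fixed-determinant, traceless configuration to another one (this is immediate since $\gammaeta\circ\sigma=\gammaeta$, so $\gammaeta_*(\sigma^*F)\cong\gammaeta_*F$ and likewise for the Higgs field), and that it permutes the components $\mathcal{N}^{\mathbf t}_\eta$ according to the induced permutation of $\mathbf{P}(\alpha)$ coming from the cyclic relabeling of $\gammaeta^{-1}(p_i)$. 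Thus $\mathrm{Gal}(\gammaeta)$ acts on the variety $\mathcal{N}_\eta$ and the quotient $\mathcal{N}_\eta/\mathrm{Gal}(\gammaeta)$ is well-defined as a variety (or as a set, which suffices for the stated set-theoretic bijection).

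Next I would invoke the displayed equivalence preceding the corollary, which says $\gammaeta_*(F_*,\phi_*)\simeq\gammaeta_*(F'_*,\phi'_*)$ iff $(F_*,\phi_*)\simeq\sigma^*(F'_*,\phi'_*)$ for some $\sigma\in\mathrm{Gal}(\gammaeta)$. Read in terms of $f$, this says exactly that $f(F_*,\phi_*)=f(F'_*,\phi'_*)$ in $\MHetaalpha$ if and only if $(F_*,\phi_*)$ and $(F'_*,\phi'_*)$ lie in the same $\mathrm{Gal}(\gammaeta)$-orbit. So each nonempty fiber $f^{-1}(E_*,\varphi_*)$ is a single orbit, proving the first sentence of the corollary. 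Combined with surjectivity of $f$ (Lemma \ref{parabolic fixed point}), the induced map $\overline{f}:\mathcal{N}_\eta/\mathrm{Gal}(\gammaeta)\to\MHetaalpha$ is a bijection: it is surjective because $f$ is, and injective because two points of $\mathcal{N}_\eta$ with the same image under $f$ are identified in the quotient.

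The main obstacle, if one wants more than a bijection of sets, is promoting $\overline{f}$ to an isomorphism of varieties (or schemes): one must know that $f$ is not merely surjective but, say, finite and that the scheme-theoretic quotient $\mathcal{N}_\eta/\mathrm{Gal}(\gammaeta)$ exists and that $\overline{f}$ is an isomorphism onto its image. Since the $\mathrm{Gal}(\gammaeta)$-action on $\mathcal{N}_\eta$ is by a finite group on a quasi-projective variety, the categorical quotient exists as a quasi-projective variety, and $\overline{f}$ is a bijective morphism between them; to conclude it is an isomorphism one would check it is separable (automatic in characteristic zero) and normal on the source, or simply that $\overline{f}$ is proper with reduced, normal target. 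In the present characteristic-zero, quasi-projective setting this is routine, so I would state the corollary at the level of the natural isomorphism induced by $f$ and remark that it follows from the orbit description of the fibers together with the surjectivity, with the scheme-theoretic refinement being standard GIT. The one point deserving a sentence of care is that the morphism $f$ is constructed component-by-component in Lemma \ref{parabolic fixed point}, so one should note that $f$ restricted to the union of those $\mathcal{N}^{\mathbf t}_\eta$ in a single $\mathrm{Gal}(\gammaeta)$-orbit has image a single connected piece of $\MHetaalpha$, and that $\overline{f}$ glues these into the stated global isomorphism.
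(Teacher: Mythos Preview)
Your proposal is correct and follows essentially the same approach as the paper: the corollary is deduced directly from the surjectivity of $f$ (Lemma \ref{parabolic fixed point}) together with the displayed equivalence immediately preceding the corollary, which identifies the fibers of $f$ with ${\rm Gal}(\gammaeta)$-orbits. The paper treats this as immediate and gives no further argument; your additional remarks on the well-definedness of the ${\rm Gal}(\gammaeta)$-action on $\mathcal{N}_\eta$ and on upgrading the bijection to an isomorphism of varieties are reasonable elaborations but go beyond what the paper records.
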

	\begin{proof}
		 With the same notations as in Lemma \ref{parabolic fixed point}, suppose $(F_*\,,\,\phi_*)$ and $(F'_*\,,\,\phi'_*)$ in $\mathcal{N}_\eta := \underset{t\in  {\bf P}(\alpha)}{\coprod}\mathcal{N}^\textbf{t}_\eta$\, satisfy $f((F_*,\,\phi_*))\,=\,f((F'_*,\,\phi'_*))$.  Taking pull-back by $\gamma_{ \eta}$ would then imply 
		 \begin{align*}
		 	\bigoplus_{\sigma\in {\rm Gal}(\gammaeta)}\sigma^*(F_*\ ,\ \phi_*)\simeq \bigoplus_{\tau\in{\rm Gal}(\gammaeta)}\tau^*(F'_*\ ,\ \phi'_*)\ .
		 \end{align*}
		 As the direct summands on both sides are parabolic stable, this clearly implies that $(F'_*\ ,\ \phi'_*)\simeq \sigma^*(F_*\ ,\ \phi_*)$ for some $\sigma\in\rm{Gal}(\gammaeta)$. Also, recall that in the proof of Lemma \ref{parabolic fixed point} we have already shown that the collection of parabolic Higgs bundles $\{\sigma^*(F_*\ ,\ \phi_*)\ \mid\ \sigma\in\rm{Gal}(\gammaeta)\}$ are mutually non-isomorphic (cf. page 9). Thus, it follows that $(F'_*\ ,\ \phi'_*)\simeq \sigma^*(F_*\ ,\ \phi_*)$ for a unique $\sigma\in\rm{Gal}(\gammaeta)$. Thus the group $\rm{Gal}(\gammaeta)$ acts freely and transitively on the fibers of the surjective map $f: \mathcal{N}_{\eta}\rightarrow\MHetaalpha$, from which we conclude that
		 $$\mathcal{N}_\eta/{\rm Gal}({\gammaeta})\,\simeq\, \MHetaalpha\ .$$
		 This proves our claim.
	\end{proof}
	
	\section{Connected components of fixed point loci}\label{connected componets of fixed point loci}
	
	In this section the connected components of $\MHetaalpha$ will
	be described. First, let us define an action of ${\rm Gal}({\gammaeta})$ on the set $ {\bf P}(\alpha)$ described in \S~\ref{estimate}.
	
	Fix a generator $\mu$ of ${\rm Gal}({\gammaeta})$, and also fix an ordering of ${\gammaeta}^{-1}(p)$ for each $p\,\in\,
	S$. With this ordering, write $${\gammaeta}^{-1}(p) \,=\, \{q_{1,p},\,q_{2,p},\,\cdots,\, q_{m,p}\}\,\,\,\text{such that}\,\,
	\mu^i(q_{j,p})\,=\, q_{i+j,p}\, ,$$
	where $(i+j)$ is to be understood mod $m$. Each $\textbf{t}\,\in\,  {\bf P}(\alpha)$ then determines an ordered
	partition of $\{\alpha^p_1,\, \alpha^p_2,\,\cdots, \,\alpha^p_r\}$ each having cardinality $r/m$, namely
	$\{\textbf{t}^p_{1},\,\textbf{t}^p_{2},\,\cdots ,\,\textbf{t}^p_{m}\}$ for each $p\,\in\, S$. Moreover, by
	definition, $\textbf{t}^p_j$ describes the full-flag parabolic data at $q_{j,p}$. For
	$\mu^i\,\in\, {\rm Gal}({\gammaeta})$
	, define $\mu^i\cdot \textbf{t}$ to be the new ordered partition
	$$\{\textbf{t}^p_{i+1},\,\textbf{t}^p_{i+2},\,\cdots ,\,\textbf{t}^p_{i+m}\}_{p\in S}\,\,,$$ 
	where, now, $\textbf{t}^p_{i+j}$ describes the parabolic structure at $q_{j,p}$ for all $j\,\in\,[1,\,m]$. It is easy to check that this action of ${\rm Gal}({\gammaeta})$ is free due to the condition that all the weights are different.
	
	Now, ${\rm Gal}({\gammaeta})$ acts on $\mathcal{N}_\eta\,=\,
	\underset{\textbf{t}\in  {\bf P}(\alpha)}{\coprod}\mathcal{N}^\textbf{t}_\eta$ by pull-back. Clearly
	$$\{\sigma\,\in\, {\rm Gal}({\gammaeta}),\,\,(F_*,\,\phi_*)\,\in\, \mathcal{N}^\textbf{t}_{\eta}\}\,\,\implies\,\,
	\{(\sigma^*(F_*),\,\sigma^*(\phi_*))\,\in\, \mathcal{N}^{\sigma\cdot \textbf{t}}_{\eta}\}.$$
	In fact, we have an isomorphism $\mathcal{N}^\textbf{t}_\eta \,\simeq\, \mathcal{N}^{\sigma\cdot \textbf{t}}_\eta$
	defined by $(F_*,\,\phi_*)\,\longmapsto\, (\sigma^*(F_*),\,\sigma^*(\phi_*))$.
	
	\begin{lemma}\label{components}
		The map 
		$$f\,:\, \mathcal{N}_\eta\,=\,\underset{t\in  {\bf P}(\alpha)}{\coprod}\mathcal{N}^\textbf{t}_\eta\,\longrightarrow\, \MHetaalpha$$ 
		given by parabolic pushforward satisfies the following properties:
		\begin{enumerate}[(i)]
			\item For each $\textbf{t}\,\in \, {\bf P}(\alpha)$, the nap $f^\textbf{t}\,:=\,f|_{\mathcal{N}^\textbf{t}_\eta}$ is injective; thus
			$\mathcal{N}^\textbf{t}_\eta\,\simeq\, f(\mathcal{N}^\textbf{t}_\eta)$ can be considered as a closed (and open) subset of $\Metaalpha$\,.\\
			
			\item For $\textbf{t},\,\textbf{t}'\,\in\,  {\bf P}(\alpha)$, the following are equivalent:
			\begin{enumerate}
				\item $\textbf{t}'\,=\,\sigma\cdot \textbf{t}$ for some $\sigma\,\in\, {\rm Gal}({\gammaeta})$,
				
				\item $f(\mathcal{N}^\textbf{t}_\eta)\,=\,f(\mathcal{N}^{\textbf{t}'}_\eta)$,
				
				\item $f(\mathcal{N}^\textbf{t}_\eta)\cap f(\mathcal{N}^{\textbf{t}'}_\eta)\,\neq\, \emptyset$.
			\end{enumerate}
		\end{enumerate}
	\end{lemma}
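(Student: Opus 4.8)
The plan is to deduce everything from Corollary~\ref{quotient}, which identifies the fibres of $f$ with the orbits of ${\rm Gal}(\gammaeta)$ (so that $f$ realizes $\MHetaalpha$ as the variety $\mathcal{N}_\eta/{\rm Gal}(\gammaeta)$), together with two facts already on the table: the action of ${\rm Gal}(\gammaeta)$ on $ {\bf P}(\alpha)$ is free, and for each $\sigma\in{\rm Gal}(\gammaeta)$ the map $(F_*,\phi_*)\mapsto(\sigma^*F_*,\sigma^*\phi_*)$ is an isomorphism $\mathcal{N}^{\textbf{t}}_\eta\,\xrightarrow{\ \sim\ }\,\mathcal{N}^{\sigma\cdot\textbf{t}}_\eta$. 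I also record the elementary remark that the loci $\mathcal{N}^{\textbf{t}}_\eta$ for distinct $\textbf{t}\in {\bf P}(\alpha)$ are pairwise disjoint: a parabolic isomorphism preserves the weights at every parabolic point, and distinct elements of $ {\bf P}(\alpha)$ prescribe different weight assignments at the points of $\gammaeta^{-1}(S)$, so no parabolic Higgs bundle carrying the structure $\textbf{t}$ is isomorphic to one carrying $\textbf{t}'\neq\textbf{t}$.

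For (i): if $f((F_*,\phi_*))=f((F'_*,\phi'_*))$ with $(F_*,\phi_*),(F'_*,\phi'_*)\in\mathcal{N}^{\textbf{t}}_\eta$, then by Corollary~\ref{quotient} the two points lie in a common ${\rm Gal}(\gammaeta)$-orbit, so $(F_*,\phi_*)\simeq\sigma^*(F'_*,\phi'_*)$ for some $\sigma$; since $\sigma^*(F'_*,\phi'_*)\in\mathcal{N}^{\sigma\cdot\textbf{t}}_\eta$, the disjointness above forces $\sigma\cdot\textbf{t}=\textbf{t}$, hence $\sigma=e$ by freeness of the action on $ {\bf P}(\alpha)$, and therefore $(F_*,\phi_*)\simeq(F'_*,\phi'_*)$. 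Thus $f^{\textbf{t}}$ is injective. For the scheme-theoretic half, note that $f$ is the quotient morphism for the finite group ${\rm Gal}(\gammaeta)$, hence finite (in particular closed) and open. The saturation $f^{-1}\bigl(f(\mathcal{N}^{\textbf{t}}_\eta)\bigr)=\coprod_{\sigma\in{\rm Gal}(\gammaeta)}\mathcal{N}^{\sigma\cdot\textbf{t}}_\eta$ is a ${\rm Gal}(\gammaeta)$-stable open-and-closed subvariety of $\mathcal{N}_\eta$ on which ${\rm Gal}(\gammaeta)$ permutes the $m$ components transitively and with trivial stabilizers; hence $\mathcal{N}^{\textbf{t}}_\eta$ is a set of orbit representatives, and the restriction $f^{\textbf{t}}\colon\mathcal{N}^{\textbf{t}}_\eta\to f(\mathcal{N}^{\textbf{t}}_\eta)$ is an isomorphism onto its image, which is therefore open and closed in $\MHetaalpha$ (hence in $\Metaalpha$ when the Higgs field is specialized to zero).

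For (ii) I prove (a)$\Rightarrow$(b)$\Rightarrow$(c)$\Rightarrow$(a). If $\textbf{t}'=\sigma\cdot\textbf{t}$, then $\mathcal{N}^{\textbf{t}'}_\eta=\sigma^*\mathcal{N}^{\textbf{t}}_\eta$, and since $\gammaeta\circ\sigma=\gammaeta$ we have $f\circ\sigma^*=f$, so $f(\mathcal{N}^{\textbf{t}'}_\eta)=f(\mathcal{N}^{\textbf{t}}_\eta)$; this is (a)$\Rightarrow$(b), and (b)$\Rightarrow$(c) is immediate for the relevant (nonempty) loci. For (c)$\Rightarrow$(a), pick $(E_*,\varphi_*)\in f(\mathcal{N}^{\textbf{t}}_\eta)\cap f(\mathcal{N}^{\textbf{t}'}_\eta)$ and write $(E_*,\varphi_*)=f((F_*,\phi_*))=f((F'_*,\phi'_*))$ with $(F_*,\phi_*)\in\mathcal{N}^{\textbf{t}}_\eta$ and $(F'_*,\phi'_*)\in\mathcal{N}^{\textbf{t}'}_\eta$; by Corollary~\ref{quotient}, $(F_*,\phi_*)\simeq\sigma^*(F'_*,\phi'_*)$ for some $\sigma\in{\rm Gal}(\gammaeta)$, and since $\sigma^*(F'_*,\phi'_*)\in\mathcal{N}^{\sigma\cdot\textbf{t}'}_\eta$, the disjointness recorded above forces $\textbf{t}=\sigma\cdot\textbf{t}'$, i.e. $\textbf{t}'=\sigma^{-1}\cdot\textbf{t}$.

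I expect the only genuinely non-formal point to be the scheme-theoretic half of (i) — namely that $f^{\textbf{t}}$ is an isomorphism onto an open-and-closed subvariety of $\MHetaalpha$, not merely a bijection. This rests on the standard facts that the quotient of a variety by a finite group is a finite and open morphism and that an open-and-closed subvariety meeting each fibre of the quotient map in exactly one point maps isomorphically onto its image; the relevant free transitive action on the set of components $\{\mathcal{N}^{\sigma\cdot\textbf{t}}_\eta\}_{\sigma}$ supplies the hypothesis. Everything else reduces, via Corollary~\ref{quotient}, to the combinatorics of the free ${\rm Gal}(\gammaeta)$-action on $ {\bf P}(\alpha)$ and the bookkeeping of parabolic weights.
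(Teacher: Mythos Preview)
Your proof is correct and follows essentially the same route as the paper: both deduce everything from Corollary~\ref{quotient} together with the freeness of the ${\rm Gal}(\gammaeta)$-action on $ {\bf P}(\alpha)$, and the arguments for injectivity in (i) and the chain (a)$\Rightarrow$(b)$\Rightarrow$(c)$\Rightarrow$(a) in (ii) are identical in substance. The only minor difference is in the topological half of (i): the paper obtains closedness from properness of finite-group quotients and then defers openness to part (ii) (the images partition $\MHetaalpha$ into finitely many closed pieces, hence each is open), whereas you argue openness directly from the fact that finite-group quotient maps are open and give a slightly more careful saturation argument for the scheme-theoretic isomorphism --- but this is a presentational rather than a mathematical distinction.
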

	
	\begin{proof}
		$(i):$\, This follows from Corollary \ref{quotient}, which says that a fiber of $f$ coincides with a ${\rm
			Gal}({\gammaeta})$-orbit, together with the fact that $\mathcal{N}^\textbf{t}_\eta\,\neq\, \sigma^*(\mathcal{N}^\textbf{t}_\eta )$
		for a non-trivial $\sigma\,\in \,{\rm Gal}({\gammaeta})$. Since the quotient by a finite group is a proper map, it follows that
		$f(\mathcal{N}^\textbf{t}_\eta)$ is closed. The fact that $f(\mathcal{N}^{\bf t}_\eta)$ is also open in $\MHetaalpha$ follows
		from $(ii)$ below.
		
		\begin{enumerate}[]
			\item $(ii)$:\, $(a)\implies (b)$:\, We saw that $(F_*,\,\phi_*)\,\in\, \mathcal{N}^\textbf{t}_{\eta}$
			if and only if $(\sigma^*(F_*),\,\sigma^*(\phi_*))\,\in \,\mathcal{N}^{\sigma\cdot \textbf{t}}_{\eta}$. Since
			${\gammaeta}_*((F_*,\,\phi_*))\,=\,{\gammaeta}_*((\sigma^*(F_*),\,\sigma^*(\phi_*)))$, the claim follows.
			
			\item $(b)\implies (c)$:\, This is obvious.
			
			\item $(c)\implies (a)$:\, The fibers of $f$ coincide with ${\rm Gal}({\gammaeta})$--orbits (see Corollary \ref{quotient}). Thus,
			if some fiber of $f$ intersects both $\mathcal{N}^\textbf{t}_\eta$ and $\mathcal{N}^{\textbf{t}'}_\eta$, then
			there exists $(F_*,\,\phi_*)\,\in\,\mathcal{N}^\textbf{t}_\eta$ such that $(\sigma^*(F_*),\,\sigma^*(\phi_*))
			\,\in \,\mathcal{N}^{\textbf{t}'}_{\eta}$ for some $\sigma\,\in\, {\rm Gal}({\gammaeta})$. This
			implies that $\textbf{t}'\,=\,\sigma\cdot \textbf{t}$. 
		\end{enumerate}
		This completes the proof.
	\end{proof}
	
	{}From Lemma \ref{components} it is evident that in order to understand the connected components of $\MHetaalpha$, it is
	enough to understand the connected components of $\mathcal{N}^\textbf{t}_\eta$ for various $\textbf{t}\,\in\, P(\alpha)$.
	
	As we have $\gammaeta^*(K_X(D))\,=\,K_{\yeta}(\gammaeta^*D)$, it follows that $K_{\yeta}(\gammaeta^*D)$ is naturally a
	${\rm Gal}(\gammaeta)$-equivariant bundle. Thus ${\rm Gal}(\gammaeta)$ also acts on $H^0(\yeta,\,K_{\yeta}(\gamma^*D))$; an element $\sigma
	\,\in\, {\rm Gal}(\gammaeta)$ acts by sending a section $s\,\longmapsto\, \sigma s \sigma^{-1}$ via the diagram
	\begin{align}\label{diagram 0}
		\xymatrix{\gammaeta^*(K_X(D))\ar[r]^{\cdot \sigma} \ar[d] & \gammaeta^*(K_X(D)) \ar[d] \\
			\yeta \ar[r]^{\cdot \sigma} & \yeta
		}
	\end{align}
	Now, we have 
	\begin{align}
		H^0(\yeta,\, K_{\yeta}(\gammaeta^*D)) \,=\, H^0(\yeta,\, \gammaeta^*(K_X(D))) &\,=\, H^0(X,\,\gammaeta_*(\gammaeta^*(K_X(D))))\\
		&\simeq\, H^0(X,\,K_X(D)\otimes\gammaeta_*\mathcal{O}_{\yeta})\,\,\,\,[\text{projection formula}]\\
		& \simeq\, \bigoplus_{i=0}^{m-1} H^0(X,\,K_X(D)\otimes \eta^i)\,\,\,[\text{as }\, \gammaeta_*\mathcal{O}_{\yeta}
		\,\simeq \,\bigoplus_{i=0}^{m-1}\eta^i].
	\end{align}
	Moreover, the decomposition $H^0(\yeta,\, K_{\yeta}(\gammaeta^*D))\,\simeq\,
	\bigoplus_{i=0}^{m-1} H^0(X,\,K_X(D)\otimes \eta^i)$ is in fact the eigenspace decomposition of
	$H^0(\yeta ,\, K_{\yeta}(\gammaeta^*D))$ under the action of the cyclic group ${\rm Gal}(\gammaeta)$ as defined above
	\cite[Corollary 3.11]{EV92}.
	Thus, $H^0(X,\,K_X(D))$ can be identified with the subspace of $H^0\left(\yeta,\, K_{\yeta}(\gammaeta^*D)\right)$ corresponding
	to the eigenvalue $1$; in other words, it is the subspace of ${\rm Gal}(\gamma)$--fixed points. Consequently,
	we get a natural linear surjection 
	\begin{align}\label{linear map 1}
		h\,:\, H^0\left(\yeta,\, K_{\yeta}(\gammaeta^*D)\right)&\,\longrightarrow\, H^0(X,\,K_X(D))
	\end{align}
	that sends a section $s$ to the descent of the ${\rm Gal}(\gammaeta)$-invariant section $\underset{\sigma\,\in
		\,{\rm Gal}(\gammaeta)}{\sum}\sigma s \sigma^{-1}$ (see (\ref{diagram 0})).
	
	For future use, let us note that each fiber of $h$ is connected.
	
	\begin{lemma}[\text{\cite[Lemma 3.18]{Na05}}]\label{lem3}
		Let $\mu_m$ denote the group of $m$-th roots of unity in ${\mathbb C}^*$. Consider the left regular representation of $\mu_m$, namely
		$\mu_m$ acting on the group algebra $k[\mu_m]$ by left multiplication. Let $\chi_{\text{reg}}
		\,:\,\mu_m\,\longrightarrow
		{\mathbb C}^*$ be the determinant of the regular representation, and let $L_{\chi_{\text{reg}}}\,:=\,
		\yeta\times^{\chi_{\text{reg}}}\mathbb C$ denote the associated line bundle on $X$. Then
		\begin{equation*}
			L_{\chi_{\text{reg}}} \simeq
			\begin{cases}
				\mathcal{O}_X & {\rm if }\, m\,\,\textnormal{is odd,}\\
				{\eta^{m/2}} & \text{if}\ m\ \text{is even}\ .
			\end{cases}
		\end{equation*}
	\end{lemma}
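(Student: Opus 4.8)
The plan is to compute the determinant of the left regular representation of $\mu_m$ explicitly and then identify the associated line bundle using the structure of the covering $\gammaeta\colon \yeta \to X$. First I would recall that the left regular representation of the cyclic group $\mu_m = \langle \mu \rangle$ decomposes as the direct sum $\bigoplus_{k=0}^{m-1} \chi^k$ of all the characters of $\mu_m$, where $\chi$ is the tautological character sending $\mu$ to $\exp(2\pi\sqrt{-1}/m)$. Hence $\chi_{\text{reg}} = \det\big(\bigoplus_{k=0}^{m-1}\chi^k\big) = \prod_{k=0}^{m-1}\chi^k = \chi^{\,0+1+\cdots+(m-1)} = \chi^{m(m-1)/2}$. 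So the whole problem reduces to evaluating the exponent $m(m-1)/2$ modulo $m$: when $m$ is odd, $m(m-1)/2 = m\cdot\frac{m-1}{2} \equiv 0 \pmod m$, so $\chi_{\text{reg}}$ is the trivial character; when $m$ is even, $m(m-1)/2 \equiv m/2 \pmod m$ since $\frac{m(m-1)}{2} - \frac m2 = \frac{m(m-2)}{2} = m\cdot\frac{m-2}{2} \equiv 0$, so $\chi_{\text{reg}} = \chi^{m/2}$, the unique character of order $2$.

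Next I would translate these character computations into statements about line bundles on $X$ via the associated-bundle construction $L_\chi = \yeta \times^{\chi}\mathbb C$. The key point is that this construction is a homomorphism from the character group $\widehat{\mu_m}$ to $\Pic(X)$: $L_{\chi\chi'} \simeq L_\chi \otimes L_{\chi'}$, and $L_{\text{trivial}} \simeq \mathcal O_X$. Moreover, by the very definition of the spectral curve $\yeta$ in \eqref{ye} together with the tautological trivialization $\theta\colon \mathcal O_{\yeta} \to \gammaeta^*\eta$ of \eqref{tautological trivialization}, the line bundle associated to the tautological character $\chi$ itself is exactly $\eta$; equivalently, the decomposition $\gammaeta_*\mathcal O_{\yeta} \simeq \bigoplus_{i=0}^{m-1}\eta^i$ used earlier in the section records precisely that $L_{\chi^i}\simeq \eta^{i}$. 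Combining this with the homomorphism property and the two cases from the previous paragraph gives $L_{\chi_{\text{reg}}} = L_{\chi^{m(m-1)/2}} \simeq \eta^{\,m(m-1)/2}$, which is $\mathcal O_X$ when $m$ is odd (since $\eta^m \simeq \mathcal O_X$) and $\eta^{m/2}$ when $m$ is even, as claimed.

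The one genuinely delicate point, and the step I expect to require the most care, is the identification $L_\chi \simeq \eta$ for the tautological character: one must be sure that the $\mu_m$-action on $\yeta$ coming from the Galois structure of $\gammaeta$ matches, under $\theta$, the scaling action of $\mu_m \subset \mathbb C^*$ on the fibers of $\eta$, so that "twisting the trivial bundle by $\chi$" really does reproduce $\eta$ and not some other $\eta^i$ or a dual. This is exactly the compatibility already exploited in the proof of Lemma \ref{parabolic fixed point} (the description of $\theta_{q_i}$ as $\lambda \mapsto \lambda q_i$ and the $\mu^i$-action sending $q_j$ to $q_{i+j}$), so it can be cited rather than redone; everything else is the elementary character arithmetic above. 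Since the statement is attributed to \cite[Lemma 3.18]{Na05}, it would also be legitimate simply to cite that reference, but the short self-contained argument via $\chi_{\text{reg}} = \chi^{m(m-1)/2}$ is worth recording.
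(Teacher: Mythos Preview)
Your proof is correct and follows essentially the same strategy as the paper's: both identify the associated line bundle of the tautological character with $\eta$ via the tautological trivialization, then compute $\chi_{\text{reg}}$ explicitly. The only cosmetic difference is that the paper computes $\chi_{\text{reg}}(\xi_m)$ directly as the determinant $(-1)^{m-1}$ of the cyclic permutation matrix on the basis $\{1,\xi_m,\ldots,\xi_m^{m-1}\}$, whereas you reach the same value by decomposing the regular representation as $\bigoplus_{k=0}^{m-1}\chi^k$ and summing exponents to get $\chi_{\text{reg}}=\chi^{m(m-1)/2}$.
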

	
	\begin{proof}
		Let $\mu_m\,=\,\langle \xi_m\rangle$. From the description of the tautological section of $\gammaeta^*(\eta)
		\,\simeq\, \mathcal{O}_{\yeta}$, it is straightforward to see that if $\chi_0 \,:\, \mu_m\,\hookrightarrow\, {\mathbb C}^*$
		denotes the inclusion morphism, then $\eta$ is isomorphic to the associated line bundle $Y_{\chi_0}
		\,=\, \yeta\times^{\chi_0}\mathbb C$. Choosing an ordered basis $\{1,\,\xi_m,\,\xi_m^2,\,\cdots,\,\xi_m^{m-1}\}$ we see that
		\begin{align*}
			\chi_{\text{reg}}(\xi_m) \,=\, \det\begin{bmatrix}
				0 & 0 & \cdots & 0 & 1 \\
				1 & 0 & \cdots & 0 & 0 \\
				\vdots & \vdots & \cdots & \vdots & \vdots \\
				0&0&\cdots&1&0
			\end{bmatrix} \,=\, (-1)^{m-1}.
		\end{align*} 
		Thus for $m$ odd, $\chi_{\text{reg}}$ is trivial, whereas for $m$ even, $\chi_{\text{reg}} \,=\, (\chi_0)^{m/2}$, and
		consequently $L_{\chi_{\text{reg}}}\,\simeq\, L_{\chi_0}^{m/2} \,\simeq\, \eta^{m/2}$. 
	\end{proof}
	
	\begin{lemma}[\text{\cite[Proposition 3.50]{Na05}}]\label{lem4}
		Let $F$ be a vector bundle of rank $r/m$ on $\yeta$. Let $\gammaeta\,:\,\yeta\,\longrightarrow\, X$ be the above spectral curve. Then 
		\begin{equation*}
			{\det}_{X}(\gammaeta_*(F)) \simeq
			\begin{cases}
				Nm_{\gammaeta}(\det(F)) & {\rm if }\, m\,\,\textnormal{is odd,}\\
				Nm_{\gammaeta}(\det(F))\otimes \eta^{r/2} & \text{if}\ m\ \text{is even}\ .
			\end{cases}
		\end{equation*}
	\end{lemma}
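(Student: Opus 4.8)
The plan is to prove first the general determinant formula
\[
{\det}_X(\gammaeta_*F)\;\cong\;Nm_{\gammaeta}(\det F)\otimes\bigl({\det}_X(\gammaeta_*\mathcal{O}_{\yeta})\bigr)^{\otimes\, r/m}
\]
for \emph{any} vector bundle $F$ of rank $r/m$ on $\yeta$, and then to identify the correction term $\det_X(\gammaeta_*\mathcal{O}_{\yeta})$ via Lemma~\ref{lem3}. For the general formula, the starting point is that $\gammaeta$ is finite, hence affine, so $\gammaeta_*$ is exact; consequently each of the three assignments $F\mapsto{\det}_X(\gammaeta_*F)$, $F\mapsto Nm_{\gammaeta}(\det F)$ and $F\mapsto\bigl({\det}_X\gammaeta_*\mathcal{O}_{\yeta}\bigr)^{\otimes\,\mathrm{rk}(F)}$ sends a short exact sequence of bundles on $\yeta$ to a tensor product of line bundles on $X$ (using multiplicativity of $\det$ and of the norm $Nm_{\gammaeta}$ in short exact sequences, together with additivity of ranks). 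Since every vector bundle on the smooth projective curve $\yeta$ admits a filtration by sub-bundles with line-bundle quotients, an induction on $\mathrm{rk}(F)$ reduces the desired formula to the case $\mathrm{rk}(F)=1$.

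For a line bundle $L$ on $\yeta$ I would proceed by dévissage over $\Pic(\yeta)$. The base case $L=\mathcal{O}_{\yeta}$ is immediate since $Nm_{\gammaeta}(\mathcal{O}_{\yeta})=\mathcal{O}_X$. To pass from $L$ to $L(\pm y)$ for a closed point $y\in\yeta$, apply the exact functor $\gammaeta_*$ to the skyscraper sequence $0\to L\to L(y)\to\mathbb{C}_y\to 0$ and use that $\gammaeta$ is étale, so $\gammaeta_*\mathbb{C}_y\cong\mathbb{C}_{\gammaeta(y)}$ is a length-one torsion sheaf; this yields ${\det}_X(\gammaeta_*L(y))\cong{\det}_X(\gammaeta_*L)\otimes\mathcal{O}_X(\gammaeta(y))$, while on the other side $Nm_{\gammaeta}(L(y))\cong Nm_{\gammaeta}(L)\otimes\mathcal{O}_X(\gammaeta(y))$ because the push-forward of the prime divisor $y$ is the reduced point $\gammaeta(y)$. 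As $\Pic(\yeta)$ is generated by classes of closed points, this propagates the identity from $\mathcal{O}_{\yeta}$ to all line bundles, which finishes the general formula.

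It then remains to evaluate ${\det}_X(\gammaeta_*\mathcal{O}_{\yeta})$: since $\gammaeta_*\mathcal{O}_{\yeta}$ is the bundle associated to the regular representation of $\mathrm{Gal}(\gammaeta)\cong\mu_m$, its determinant is exactly the line bundle $L_{\chi_{\text{reg}}}$ of Lemma~\ref{lem3}, namely $\mathcal{O}_X$ for $m$ odd and $\eta^{m/2}$ for $m$ even. Substituting into the general formula with $\mathrm{rk}(F)=r/m$ gives ${\det}_X(\gammaeta_*F)\cong Nm_{\gammaeta}(\det F)\otimes\mathcal{O}_X^{\otimes r/m}=Nm_{\gammaeta}(\det F)$ when $m$ is odd, and ${\det}_X(\gammaeta_*F)\cong Nm_{\gammaeta}(\det F)\otimes(\eta^{m/2})^{\otimes r/m}=Nm_{\gammaeta}(\det F)\otimes\eta^{r/2}$ when $m$ is even (here $r/m$, and in the even case also $r/2$, are integers). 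I expect the main obstacle to be the general determinant formula, and within it the rank-one dévissage: one must keep careful track of the difference between the determinant of a torsion sheaf and the norm of a divisor, and check that the étale hypothesis makes both contribute the \emph{same} reduced point $\mathcal{O}_X(\gammaeta(y))$. As a cross-check on the power of $\eta$, one may pull back along $\gammaeta$: both $\gammaeta^*{\det}_X(\gammaeta_*F)$ and $\gammaeta^*Nm_{\gammaeta}(\det F)$ equal $\bigotimes_{\sigma\in{\rm Gal}(\gammaeta)}\sigma^*\det F$, so the two sides differ by an element of $\ker(\gammaeta^*\colon\Pic X\to\Pic\yeta)=\langle\eta\rangle$, whose precise value is then pinned down by testing on $F=\mathcal{O}_{\yeta}^{\oplus r/m}$.
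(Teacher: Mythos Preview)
Your argument is correct, and it takes a genuinely different route from the paper's. The paper proves the lemma by Galois descent: it realises $\gammaeta_*F$ as the descent of the ${\rm Gal}(\gammaeta)$-equivariant bundle $\bigoplus_{\sigma}\sigma^*F$, identifies its determinant (as an equivariant line bundle) with $\bigl(\bigotimes_{\sigma}\sigma^*\det F\bigr)\otimes\mathcal{O}_{\yeta}$ where the trivial factor carries the character $\chi_{\text{reg}}^{r/m}$, and then descends each tensor factor separately, invoking Lemma~\ref{lem3} only at the last step. Your approach instead establishes the universal formula ${\det}_X(\gammaeta_*F)\cong Nm_{\gammaeta}(\det F)\otimes({\det}_X\gammaeta_*\mathcal{O}_{\yeta})^{\otimes\,\mathrm{rk}(F)}$ by d\'evissage---first filtering $F$ by line subbundles to reduce to rank one, then walking through $\Pic(\yeta)$ by twists $\mathcal{O}(\pm y)$---and only afterwards identifies the correction term via Lemma~\ref{lem3}.

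Each approach has its virtues. Your d\'evissage is more elementary and in fact yields the general determinant-of-pushforward formula for any finite morphism of smooth curves (the Galois hypothesis enters only when you compute ${\det}_X\gammaeta_*\mathcal{O}_{\yeta}$); the price is a mild amount of bookkeeping with determinants of torsion sheaves. The paper's descent argument is more conceptual and makes the appearance of the regular representation transparent from the outset, but it requires keeping careful track of how the ${\rm Gal}(\gammaeta)$-action on $\bigoplus_\sigma\sigma^*F$ interacts with the wedge product. Your closing cross-check---pulling back along $\gammaeta$ to see that the two sides differ by an element of $\langle\eta\rangle$, then testing on $\mathcal{O}_{\yeta}^{\oplus r/m}$---is essentially a compressed version of the paper's argument, so you have in effect sketched both proofs.
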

	
	\begin{proof}
		Let us denote $G\,=\,{\rm Gal}(\gammaeta)$. We have $G\,\simeq\, \mu_m\,\subset\,{\mathbb C}^*$, the group of $m$th roots of unity. The bundle $\gammaeta_*(E)$ is obtained as the descent of the $G$--equivariant bundle $\bigoplus_{\sigma\in G}\sigma^*E$. We have
		\begin{align}\label{eq2} 
			\det(\bigoplus_{\sigma\in G}\sigma^*F) \simeq \underset{\sigma\in G}{\otimes}\sigma^*(\det F) \simeq (\underset{\sigma\in G}{\otimes}\sigma^*\det F) \otimes \mathcal{O}_{\yeta}\,.
		\end{align}
		We want to view this isomorphism as an equivariant isomorphism. On $\underset{\sigma\in G}{\otimes}\sigma^*F$, $G$ acts naturally by permuting the summands, together with moving the fibers. If we identify the fibers of $\underset{\sigma\in G}{\oplus}\sigma^*F$ with the direct sum of $\frac{r}{m}$ copies of the group ring $k[\mu_m]^{\oplus r/m}$, we see immediately that the $G$--action on fibers of (\ref{eq2}) coincides with direct sum of $r/m$ copies of regular representation of $G$. Thus, on the left-hand side of (\ref{eq2}), the $G$--action is precisely the $\frac{r}{m}$'th power of the determinant of the regular representation. Since the $G$--action on the fibers of $\underset{\sigma\in G}{\otimes}\sigma^*\det F$ is trivial, it follows immediately that in order for the isomorphism (\ref{eq2}) to be equivariant, $G$ acts on $\mathcal{O}_{\yeta}$ via the character $\chi_{\text{reg}}^{\frac{r}{m}}$\,, where $\chi_{\text{reg}}$ denotes the determinant of the regular representation of $\mu_m$.\\
		Since descent commutes with tensor products of equivariant bundles, it follows that the descents of (\ref{eq2}) are isomorphic. The bundle $\det(\bigoplus_{\sigma\in G}\sigma^*F)$ descends to $\det(\gammaeta_*F)$, the bundle $\underset{\sigma\in G}{\otimes}\sigma^*\det F$ descends to $Nm_{\gammaeta}(\det F)$, and the bundle $\mathcal{O}_{\yeta}$ with action via $\chi_{\text{reg}}^{r/m}$ descends to $\mathcal{O}_X$ if $m$ is odd, and to $(\eta^{m/2})^{r/m} = \eta^{r/2}$ if $m$ is even [Lemma \ref{lem3}]. The result follows.
	\end{proof}
	
	\begin{lemma}\label{lem1}
		Let $l=r/m$. With the same notations as above, for all $\textbf{t}\in  {\bf P}(\alpha)$, the following diagram commutes:
		\begin{align}\label{diagram 1}
			\xymatrix@=2cm{
				M^\textbf{t}_{Y_\eta,H}(l,d) \ar[r]^{f^\textbf{t}} \ar[d]_{(\det_{\yeta}\,,\, tr_{\yeta})} & M^{\alpha}_{X,H}(r,d) \ar[d]^{(\det_X\,,\, tr_X)}\\
				\Pic^d(Y_\eta)\times H^0(\yeta,K_{\yeta}(\gammaeta^*D)) \ar[r]^{g
					\,\times\, h} & \Pic^d(X)\times H^0(X,K_X(D))
			}
		\end{align}
		where $f^{\textbf{t}}$ is the map from Lemma \ref{parabolic fixed point},\ $h$ is as in \eqref{linear map 1}, and $g:\Pic^d(Y_\eta)\rightarrow \Pic^d(X)$ is given by
		\begin{equation}
			g(L) =
			\begin{cases}
				Nm_{{\gammaeta}}(L)\ , &  \text{if}\ m \ \text{is odd}\\
				Nm_{{\gammaeta}}(L)\otimes \eta^{r/2}\ , & \text{if}\ m\ \text{is even}\ .
			\end{cases}
		\end{equation}
	\end{lemma}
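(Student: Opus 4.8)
The plan is to split the bottom map $g\times h$ into its two factors and check commutativity of the two resulting squares separately: the $\Pic$--square compares $\det_X$ of the parabolic push-forward with $g$ applied to $\det_{\yeta}$, and the $H^0$--square compares $tr_X$ of the pushed-forward Higgs field with $h$ applied to $tr_{\yeta}$. These are independent issues. (That the diagram even makes sense --- i.e. that for $(F_*,\phi_*)\in M^{\textbf t}_{Y_\eta,H}(l,d)$, with $l=r/m$, the push-forward $\gammaeta_*(F_*,\phi_*)$ is again parabolic stable of rank $r$ and underlying degree $d$ --- follows from the stability argument already given in the proof of Lemma \ref{parabolic fixed point}, together with $\deg(\gammaeta_*F)=\deg F$, which holds because $\gammaeta$ is \'etale and $\gammaeta_*\mathcal O_{\yeta}\simeq\bigoplus_{i=0}^{m-1}\eta^i$ has degree $0$.)

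For the $\Pic$--square there is nothing to do beyond invoking Lemma \ref{lem4}: it gives $\det_X(\gammaeta_*F)\simeq Nm_{\gammaeta}(\det F)$ when $m$ is odd and $\simeq Nm_{\gammaeta}(\det F)\otimes\eta^{r/2}$ when $m$ is even, and the right-hand side is by definition $g(\det_{\yeta}(F_*,\phi_*))$; neither the parabolic structure nor the Higgs field enters.

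For the $H^0$--square I would verify $tr_X(\gammaeta_*\phi)=h(tr_{\yeta}\phi)$ after pulling back along $\gammaeta$, and then use that $\gammaeta^*$ is injective on global sections of $K_X(D)$ --- in fact it identifies $H^0(X,K_X(D))$ with the ${\rm Gal}(\gammaeta)$--fixed subspace of $H^0(\yeta,K_{\yeta}(\gammaeta^*D))$, as noted just above (\ref{linear map 1}). The key computation: using $K_{\yeta}(\gammaeta^*D)=\gammaeta^*(K_X(D))$ together with the canonical isomorphism $\gammaeta^*(\gammaeta_*F)\simeq\bigoplus_{\sigma\in{\rm Gal}(\gammaeta)}\sigma^*F$, the fibrewise formula $(\gammaeta_*\phi)_p=\bigoplus_{q\in\gammaeta^{-1}(p)}\phi_q$ from \S\ref{parabolic pushforward} shows that $\gammaeta^*(\gammaeta_*\phi)$ is block-diagonal for this decomposition, with $\sigma$--block equal to $\sigma^*\phi\colon\sigma^*F\to\sigma^*F\otimes\gammaeta^*(K_X(D))$ (one uses $\gammaeta\circ\sigma=\gammaeta$ to identify $\sigma^*K_{\yeta}(\gammaeta^*D)$ with $\gammaeta^*(K_X(D))$). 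Since the fibrewise trace is additive over blocks and commutes with pull-back,
$$\gammaeta^*\bigl(tr_X(\gammaeta_*\phi)\bigr)=tr\bigl(\gammaeta^*(\gammaeta_*\phi)\bigr)=\sum_{\sigma\in{\rm Gal}(\gammaeta)}\sigma^*(tr\,\phi)=\sum_{\sigma\in{\rm Gal}(\gammaeta)}\sigma\,(tr_{\yeta}\phi)\,\sigma^{-1},$$
the last equality being a reindexing $\sigma\mapsto\sigma^{-1}$ combined with the definition of the ${\rm Gal}(\gammaeta)$--action on $H^0(\yeta,\gammaeta^*(K_X(D)))$ from (\ref{diagram 0}). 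The right-hand side is exactly the ${\rm Gal}(\gammaeta)$--invariant section whose descent is $h(tr_{\yeta}\phi)$, hence it equals $\gammaeta^*(h(tr_{\yeta}\phi))$; injectivity of $\gammaeta^*$ then gives $tr_X(\gammaeta_*\phi)=h(tr_{\yeta}\phi)$.

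I expect the only genuine obstacle to be the equivariant bookkeeping in the second factor: one has to check that the identification of $K_{\yeta}(\gammaeta^*D)$ with $\gammaeta^*(K_X(D))$ implicit in forming $\gammaeta_*\phi$ via the projection formula is compatible both with the canonical ${\rm Gal}(\gammaeta)$--equivariant structure used to write $\gammaeta^*(\gammaeta_*F)\simeq\bigoplus_\sigma\sigma^*F$ and with the action on sections defining $h$. Once this is pinned down, the rest is a short diagram chase; alternatively one can avoid pulling back entirely and compute $tr_X(\gammaeta_*\phi)$ directly on fibres, where $\bigl(tr_X(\gammaeta_*\phi)\bigr)_p=\sum_{q\in\gammaeta^{-1}(p)}(tr\,\phi)_q$, and match it against the fibrewise description of $h$.
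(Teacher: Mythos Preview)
Your proposal is correct and follows the same approach as the paper: invoke Lemma \ref{lem4} for the $\Pic$--square and unwind the definition of $h$ for the $H^0$--square. The paper's own proof is a two-line sketch (``follows from the description of $h$''), so your explicit pull-back computation of $tr_X(\gammaeta_*\phi)$ via the block decomposition $\gammaeta^*(\gammaeta_*F)\simeq\bigoplus_\sigma\sigma^*F$ simply spells out what the paper leaves implicit.
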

	\begin{proof}
		Let $(F_*\,,\,\phi_*)\in M^\textbf{t}_{\yeta,H}(l,d)$. By definition, $f^\textbf{t}(F_*\,,\,\phi_*)=(\gammaeta_*(F_*)\,,\,\gammaeta_*(\phi_*))$. The underlying bundle $F$ satisfies ${\det}_X(\gammaeta_*(F))=g({\det}_{\yeta}(F))\,$ by Lemma \ref{lem4}.	The fact that $tr_X(\gammaeta_*(\phi)) = h(tr_{\yeta}(\phi))$ also follows from the description of $h$.
	\end{proof}
	Henceforth, we fix  $l=r/m$. 
	\begin{proposition}\label{inverse of connected under det is connected}
		Let $g$ and $h$ be as in Lemma \ref{lem1}. For any $\xi\in\Pic^d(X)$ and $\omega\in H^0(K_X(D))$\,, $(g\times h)^{-1}(\xi,\omega)$ has $m$ connected components. Moreover, $(\det_{\yeta},tr_{\yeta})^{-1}(Z)$ is connected for any connected component $Z$ of $ (g\times h)^{-1}(\xi,\omega)$. As a consequence, for any $\textbf{t}\in  {\bf P}(\alpha)$, $\mathcal{N}^\textbf{t}_\eta$ also has $m$ connected components, namely 
		$$\{({\det}_{\yeta},tr_{\yeta})^{-1}(C\times h^{-1}(0))\,\mid\,C\,\,\text{is a connected component of}\,\,g^{-1}(\xi)\}\,\,.$$ 
	\end{proposition}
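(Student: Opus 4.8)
The plan is to split the statement into three parts: count the connected components of the base set $(g\times h)^{-1}(\xi,\omega)$; show that the $(\det_{\yeta},tr_{\yeta})$-preimage of each such component is connected; then deduce the description of the components of $\mathcal{N}^{\textbf t}_\eta$ formally. First I would note that $g\times h$ is a product map, so $(g\times h)^{-1}(\xi,\omega)=g^{-1}(\xi)\times h^{-1}(\omega)$, and that $h$ of \eqref{linear map 1} is a surjective linear map of vector spaces, so $h^{-1}(\omega)$ is an affine subspace, hence connected (this was recorded just before Lemma~\ref{lem3}). Thus counting components of $(g\times h)^{-1}(\xi,\omega)$ amounts to counting those of $g^{-1}(\xi)$. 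By Lemma~\ref{lem1}, $g$ is the norm homomorphism $Nm_{\gammaeta}\colon\Pic^d(\yeta)\to\Pic^d(X)$ up to the fixed translation by $\eta^{r/2}$ when $m$ is even, and $Nm_{\gammaeta}$ is surjective because $Nm_{\gammaeta}\circ\gammaeta^{*}=[m]$ on $\Pic^0(X)$ is surjective; hence $g$ is surjective and $g^{-1}(\xi)$ is a torsor under $K:=\ker\big(Nm_{\gammaeta}\colon\Pic^0(\yeta)\to\Pic^0(X)\big)$, so $\pi_0(g^{-1}(\xi))\cong\pi_0(K)$.

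The first key input is then $\pi_0(K)\cong\mathbb{Z}/m\mathbb{Z}$: the identity component $K^{0}$ is the (connected) Prym variety $P$ of the cyclic cover $\gammaeta$, and the component group of the kernel of the norm map of a connected étale cyclic cover of degree $m$ is cyclic of order $m$. This is classical for generalized Pryms of cyclic covers (the same circle of ideas as Lemmas~\ref{lem3}--\ref{lem4}, cf.~\cite{Na05}); if a self-contained proof is wanted, one identifies $\pi_0(K)$, via Poincaré duality on $H^1(X,\mathbb{Z}/m)$, with the quotient of $H^1(X,\mathbb{Z}/m)$ by the annihilator of the class of $\gammaeta$. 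Granting this, $g^{-1}(\xi)$ — and hence $(g\times h)^{-1}(\xi,\omega)$ — has exactly $m$ connected components, each of the form $C\times h^{-1}(\omega)$ with $C$ a torsor under $P$; this is the first assertion.

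For the second assertion, fix such a component $Z=C\times h^{-1}(\omega)$. The idea is to realize $(\det_{\yeta},tr_{\yeta})^{-1}(Z)$ as the orbit, under a \emph{connected} group, of a single connected fibre. Let $\mathcal{G}:=P\times\ker(h)\subset\Pic^0(\yeta)\times H^0(\yeta,K_{\yeta}(\gammaeta^{*}D))$ act on $M^{\textbf t}_{\yeta,H}(l,d)$ by $(N,s)\cdot(F_*,\phi_*)=(F_*\otimes N,\ \phi_*+s\cdot\mathrm{Id}_{F})$, where $N$ carries trivial parabolic weights; this preserves parabolic stability and the parabolic type $\textbf t$, and under $(\det_{\yeta},tr_{\yeta})$ it covers the action $(N,s)\cdot(L,t)=(L\otimes N^{\,l},\,t+l s)$ on the base. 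Since $[l]\colon P\to P$ is an isogeny (hence surjective) and $s\mapsto l s$ is an automorphism of the vector space $\ker(h)$, the group $\mathcal{G}$ acts transitively on $Z$. Hence, fixing $b_{0}\in Z$ and putting $M_{b_{0}}:=(\det_{\yeta},tr_{\yeta})^{-1}(b_{0})$, every point of $(\det_{\yeta},tr_{\yeta})^{-1}(Z)$ lies in $\mathcal{G}\cdot M_{b_{0}}$, so $(\det_{\yeta},tr_{\yeta})^{-1}(Z)$ is the image of the connected set $\mathcal{G}\times M_{b_{0}}$ under the action morphism, and is connected as soon as $M_{b_{0}}$ is. This is where the second key input enters: after a twist by a line bundle and a shift of the Higgs field, $M_{b_{0}}$ is the moduli space of stable parabolic $\mathrm{SL}(l,\mathbb{C})$-Higgs bundles on $\yeta$ of the prescribed parabolic type, which is irreducible — in particular connected — for generic weights, a known connectedness result for parabolic Higgs moduli.

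The third assertion is then formal. By Lemma~\ref{lem1}, $\mathcal{N}^{\textbf t}_\eta=(\det_{\yeta},tr_{\yeta})^{-1}\big((g\times h)^{-1}(\xi,0)\big)=(\det_{\yeta},tr_{\yeta})^{-1}\big(g^{-1}(\xi)\times h^{-1}(0)\big)$, and by the first assertion (with $\omega=0$) the set $g^{-1}(\xi)\times h^{-1}(0)$ is a disjoint union of $m$ connected, open and closed pieces $C_{i}\times h^{-1}(0)$, where $C_{1},\dots,C_{m}$ are the components of $g^{-1}(\xi)$. As $(\det_{\yeta},tr_{\yeta})$ is surjective onto $\Pic^d(\yeta)\times H^0(\yeta,K_{\yeta}(\gammaeta^{*}D))$ (because $\Pic^0(\yeta)\times H^0(\yeta,K_{\yeta}(\gammaeta^{*}D))$ acts transitively on this base and $M^{\textbf t}_{\yeta,H}(l,d)\neq\emptyset$), each $(\det_{\yeta},tr_{\yeta})^{-1}(C_{i}\times h^{-1}(0))$ is nonempty, open and closed in $\mathcal{N}^{\textbf t}_\eta$, and connected by the second assertion; these are therefore precisely its $m$ connected components. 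The main obstacle is the first key input — the precise count $\pi_0(\ker Nm_{\gammaeta})\cong\mathbb{Z}/m$ — since it is the only step genuinely tied to the spectral-cover geometry (and one must check the statement from \cite{Na05} applies verbatim to $\gammaeta$); the connectedness of the $\mathrm{SL}$-Higgs fibre is citable, and the remainder is the bookkeeping of the equivariant fibration $(\det_{\yeta},tr_{\yeta})$ carried out above.
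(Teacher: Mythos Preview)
Your proposal is correct and follows the paper's outline for the first and third assertions, but takes a genuinely different route for the second. The paper establishes that $(\det_{\yeta},tr_{\yeta})$ is \'etale-locally trivial by pulling back along the isogeny $\mu_l\colon L\mapsto L^l\otimes\mathcal{L}_\eta$ of $\Pic^0(\yeta)$ and exhibiting the pullback as a product with the fixed-determinant moduli $M^{\textbf t}_{\yeta,H}(l,\mathcal{L}_\eta)$; it then invokes the fact that a flat, finite-type morphism with connected (irreducible) fibres is universally open, and applies a general topology lemma (open map with connected fibres over a connected base has connected total space). You instead use a transitive action of the connected group $\mathcal{G}=P\times\ker(h)$ on each component $Z$, lifted to the moduli space, to write the preimage as the image of $\mathcal{G}\times M_{b_0}$ under a continuous map. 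Both arguments rest on the same nontrivial input---irreducibility of the fixed-determinant parabolic Higgs moduli for generic weights---and both cite the classical count $|\pi_0(\ker Nm_{\gammaeta})|=m$ (the paper via \cite{GO18,NR75}, you via \cite{Na05}). Your approach is slightly more self-contained, avoiding the local-triviality and open-map machinery; the paper's approach yields the stronger structural statement that $(\det_{\yeta},tr_{\yeta})$ is an \'etale fibre bundle, which it reuses later (e.g.\ in diagram~\eqref{diagram 2} and the proof of Proposition~\ref{equivariant cohomology proposition}).
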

	\begin{proof}
		Since $(g\times h)^{-1}(\xi,\omega) = g^{-1}(\xi)\times h^{-1}(\omega)$ and clearly $h^{-1}(\omega)$ is connected, the first assertion follows from the fact that $g^{-1}(\xi)$ has $m$ connected components \cite[Proposition 5.7]{GO18} (see also \cite[Lemma 3.4]{NR75}).\\
		For any $\textbf{t} \in  {\bf P}(\alpha)$, consider the morphism $$({\det}_{Y_\eta},tr_{\yeta}):M^\textbf{t}_{\yeta,H}(l,d)\rightarrow \Pic^d(Y_\eta)\times H^0(\yeta,K_{\yeta}(\gammaeta^*D))\,. $$
		We claim that the map $(\det_{Y_\eta},tr_{\yeta})$ is \'etale-locally trivial. To prove this,
		choose any fixed line bundle $\mathcal{L}_{\eta}$ of degree $d$ on $\yeta$, and define a map 
		\begin{align}\label{eqn:mu-l-map}
			\mu_l : \Pic^0(\yeta) &\rightarrow \Pic^d(\yeta)\\
			L&\mapsto L^l\otimes \mathcal{L}_{\eta}
		\end{align} 
		Next, consider the \'etale cover 
		\begin{align*}
			\mu_l\times \text{Id}:\Pic^0(\yeta)\times H^0(\yeta,K_{\yeta}(\gammaeta^*D))&\rightarrow Pic^d(\yeta)\times H^0(\yeta,K_{\yeta}(\gammaeta^*D))\\
			(L\,,\,\omega)&\mapsto (L^l\otimes \mathcal{L}_{\eta}\,,\,\omega),
		\end{align*}
		By definition, the fiber product
		$$(\mu_l\times \text{Id})^*\left(M^\textbf{t}_{\yeta,H}(l,d)\right) = \{((F_*\,,\,\phi_*)\,,\,(L\,,\,\omega))\,\mid\, \det(F)\simeq L^l\otimes \mathcal{L}_{\eta}\,\,\text{and}\,\,tr(\phi)=\omega\}$$
		From this description, it is easy to see that
		\begin{align}\label{pullback description}
			(\mu_l\times \text{Id})^*\left(M^\textbf{t}_{\yeta,H}(l,d)\right)&\simeq M^\textbf{t}_{\yeta,H}(l,\mathcal{L}_{\eta})\underset{spec(k)}{\times}\left(\Pic^0(\yeta)\times H^0(\yeta,K_{\yeta}(\gammaeta^*D))\right)\,,\\
			\text{under the map}\,\,((F_*\,,\,\phi_*)\,,\,(L\,,\,\omega))&\mapsto ((F_*\otimes L^{\vee}\,,\,\phi_*\otimes Id_{L^{\vee}}-Id_{F\otimes L^{\vee}}\otimes\omega)\,,\,(L\,,\,\omega))\,.
		\end{align}
		This proves our claim.	Moreover, for generic weights, $M^\textbf{t}_{\yeta,H}(l,\mathcal{L}_{\eta})$ is a smooth irreducible quasi-projective variety. Thus, $({\det}_{\yeta}\,,\,tr_{\yeta}):M^\textbf{t}_{\yeta,H}(l,d)\rightarrow \Pic^d(\yeta)\times H^0(\yeta,K_{\yeta}(\gammaeta^*D))$ has connected fibers, and it is flat and finite-type implies that $({\det}_{\yeta}\,,\,tr_{\yeta})$ is universally open. Now, clearly 
		$$\mathcal{N}^\textbf{t}_{\eta}= (f^\textbf{t})^{-1}\left(({\det}_X,tr_X)^{-1}(\xi,0)\right)=({\det}_{Y_\eta},tr_{\yeta})^{-1}\left((g\times h)^{-1}(\xi,0)\right),$$
		which implies that $(\det_{Y_\eta},tr_{\yeta}): \mathcal{N}^\textbf{t}_{\eta}\rightarrow (g\times h)^{-1}(\xi,0)$ is an open map with connected fibers. The assertions now follow from an exactly similar argument as in \cite[\href{https://stacks.math.columbia.edu/tag/0377}{Tag 0377}\,,\,Lemma 5.7.5]{stacks-project}.\\
		Finally, the last assertion follows since the fibers of $h$ are connected, so that the connected components of $(g\times h)^{-1}(\xi,0)$ are precisely of the form $C\times h^{-1}(0)$, where $C$ is a component of $g^{-1}(\xi)$\,.
	\end{proof}
	\begin{corollary}\label{number of components}
		Let $l=r/m$ and $s=|S|$. For any $\eta\in \Gamma$ with $ord(\eta)= m$, the number of connected components of the fixed point locus $\MHetaalpha$ is equal to $| {\bf P}(\alpha)|=\left({r\choose l}{r-l\choose l}\cdots{l\choose l}\right)^s = \left(\dfrac{r!}{(l!)^{m}}\right)^s$.
	\end{corollary}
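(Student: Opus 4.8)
The plan is to assemble the count of connected components of $\MHetaalpha$ from the pieces already established in this section. By Lemma~\ref{components}, the map $f\colon \mathcal{N}_\eta = \coprod_{\textbf{t}\in{\bf P}(\alpha)}\mathcal{N}^\textbf{t}_\eta \longrightarrow \MHetaalpha$ is surjective, each restriction $f^\textbf{t}$ is injective with closed-and-open image, and two images $f(\mathcal{N}^\textbf{t}_\eta)$, $f(\mathcal{N}^{\textbf{t}'}_\eta)$ either coincide (precisely when $\textbf{t}' = \sigma\cdot\textbf{t}$ for some $\sigma\in{\rm Gal}(\gammaeta)$) or are disjoint. Hence $\MHetaalpha$ decomposes as a disjoint union of clopen subsets indexed by the orbits of the free ${\rm Gal}(\gammaeta)$-action on ${\bf P}(\alpha)$, and each such clopen piece is isomorphic (via $f^\textbf{t}$) to $\mathcal{N}^\textbf{t}_\eta$ for any representative $\textbf{t}$ of the orbit.

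Next I would count connected components. Since the action of ${\rm Gal}(\gammaeta)\cong\mathbb{Z}/m\mathbb{Z}$ on ${\bf P}(\alpha)$ is free, the number of orbits is $|{\bf P}(\alpha)|/m$. By Proposition~\ref{inverse of connected under det is connected}, each $\mathcal{N}^\textbf{t}_\eta$ has exactly $m$ connected components. Therefore the total number of connected components of $\MHetaalpha$ is $(|{\bf P}(\alpha)|/m)\cdot m = |{\bf P}(\alpha)|$. Finally, substituting the explicit formula $|{\bf P}(\alpha)| = \left(\binom{r}{l}\binom{r-l}{l}\cdots\binom{l}{l}\right)^s$ recorded in \S\ref{estimate} (for $s$ parabolic points) with $l=r/m$ gives the stated expression.

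The only point requiring care — and the mild obstacle — is verifying that the $m$ components of a single $\mathcal{N}^\textbf{t}_\eta$ are genuinely distinct as components of $\MHetaalpha$, i.e.\ that passing to the image under $f^\textbf{t}$ does not merge them; but this is immediate since $f^\textbf{t}$ is an isomorphism onto its clopen image by Lemma~\ref{components}(i). One should also note that distinct orbits give disjoint images by Lemma~\ref{components}(ii), so there is no over- or under-counting across orbits. Assembling these observations yields the corollary with essentially no further computation.
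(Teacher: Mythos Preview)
Your proposal is correct and follows essentially the same approach as the paper's own proof: both use Lemma~\ref{components} to decompose $\MHetaalpha$ into clopen pieces indexed by the ${\rm Gal}(\gammaeta)$-orbits in ${\bf P}(\alpha)$, invoke Proposition~\ref{inverse of connected under det is connected} to count $m$ components in each $\mathcal{N}^{\textbf{t}}_\eta$, and use freeness of the action to conclude that the total is $(|{\bf P}(\alpha)|/m)\cdot m = |{\bf P}(\alpha)|$. Your additional remark that $f^{\textbf{t}}$ cannot merge components (since it is an isomorphism onto a clopen image) is a welcome clarification that the paper leaves implicit.
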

	\begin{proof}
		By Proposition \ref{inverse of connected under det is connected}, the number of connected components of $\mathcal{N}^t_\eta$ is $m$ for any $\textbf{t}\in  {\bf P}(\alpha)$. Also, we saw in Lemma \ref{components} that the elements of $ {\bf P}(\alpha)$ in the same ${\rm Gal}({\gammaeta})$-orbit have the same image under $f$, and elements in the different ${\rm Gal}({\gammaeta})$-orbits have disjoint image. Since ${\rm Gal}({\gammaeta})$ acts freely on $ {\bf P}(\alpha)$, it follows that if $\pi_0(\mathcal{N}^\textbf{t}_\eta)$ denotes the set of connected components of $\mathcal{N}^\textbf{t}_\eta$, then the number of connected components of $\MHetaalpha$ equals $$\dfrac{| {\bf P}(\alpha)|}{|{\rm Gal}({\gammaeta})|}\cdot |\pi_0(\mathcal{N}^\textbf{t}_\eta)|\,.$$ 
		But $|{\rm Gal}({\gammaeta})|=|\pi_0(\mathcal{N}^\textbf{t}_\eta)|=m$ [Proposition \ref{inverse of connected under det is connected}], and $| {\bf P}(\alpha)|\, = \, \left({r\choose l}{r-l\choose l}\cdots{l\choose l}\right)^s = \left(\dfrac{r!}{(l!)^{m}}\right)^s$ as seen earlier.
	\end{proof}
	
	Next we describe how the action of $\Gamma$ permutes the set of connected components of $\MHetaalpha$ under certain conditions. This will be used in Section \ref{chen-ruan cohomology section} in  the computation of degree-shift numbers.
	
	\subsection{An action of $\Gamma$ on $\mathcal{N}^\textbf{t}_{\eta}$}\label{gamma action subsection}
	Recall that, by definition, 
	$$\mathcal{N}^\textbf{t}_\eta=\{(F_*\,,\,\phi_*)\,\mid\, \det ({\gammaeta}_*F)\simeq \xi\,\,\text{and}\,\,tr(\gammaeta_*\phi)=0\}.$$
	As in \S\ref{estimate}, for each $\textbf{t}\in  {\bf P}(\alpha)$ let $M^\textbf{t}_{\yeta,H}(n,d)$ denote the moduli of stable parabolic Higgs bundles on $\yeta$ of rank $n$, degree $d$ and having full-flag parabolic structures at the points of $\gammaeta^{-1}(p)$ according to $\textbf{t}$.\\
	First, note that $\Gamma$ acts on $M^\textbf{t}_{\yeta,H}(l,d)$ : an element $\delta\in \Gamma$ acts on $(F_*\,,\,\phi_*)\in M^\textbf{t}_{\yeta,H}(l,d)$ by
	$$(F_*\,,\,\phi_*)\,\mapsto (F_*\otimes {\gammaeta}^*\delta\,,\,\phi_*\otimes Id_{{\gammaeta}^*\delta})\,.$$
	We claim that this action keeps $\mathcal{N}^\textbf{t}_\eta$ invariant.
	This is because, for $(F_*\,,\,\phi_*)\in \mathcal{N}^\textbf{t}_\eta$ and $\delta\in \Gamma$,
	\begin{align*}
		\det({\gammaeta}_*(F\otimes {\gammaeta}^*\delta)) &= \det ({\gammaeta}_*(F)\otimes\delta)\,\,\,\,[\text{projection formula}]\\
		&= \det{\gammaeta}_*(F)\otimes \delta^r\\
		& \simeq \xi 
	\end{align*}
	Moreover, $tr(\gammaeta_*(\phi_*\otimes Id_{\gammaeta^*\delta})) = tr(\gammaeta_*(\phi_*)\otimes Id_\delta) = 0$. Therefore, the action of $\Gamma$ keeps $\mathcal{N}^\textbf{t}_\eta$ invariant, and induces a $\Gamma$-\,action on $\mathcal{N}^\textbf{t}_\eta$\,. \\
	This, in turn, induces an action of $\Gamma$ on its set of connected components, namely $\pi_0(\mathcal{N}^{\textbf{t}}_{\eta})$\,.\\
	
	The next proposition is a generalization of \cite[Proposition 5.13]{GO18}.
	\begin{proposition}\label{Gamma action on components}
		Let $\textbf{t}\in  {\bf P}(\alpha)$ . If $gcd(l,m)=1$ (for example if $r$ is a product of distinct primes), then there exists a line bundle $\delta_{\eta}\in \Gamma$ (depending on $\eta$) of order $m$ with the property that the subgroup $\langle \delta_\eta\rangle\simeq \mathbb{Z}/m\mathbb{Z}$ acts freely and transitively on $\pi_0(\mathcal{N}^\textbf{t}_\eta)$\,\,, while any element of\, $\Gamma$ not in $ \langle \delta_\eta\rangle$ acts trivially on $\pi_0(\mathcal{N}^\textbf{t}_\eta)$\,.
	\end{proposition}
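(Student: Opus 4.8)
The plan is to reduce the statement to the analogous (already known) fact about $g^{-1}(\xi)$ for the norm map $Nm_{\gammaeta}\colon \Pic^d(\yeta)\to\Pic^d(X)$, and to transport the $\Gamma$-action through the description of $\pi_0(\mathcal{N}^\textbf{t}_\eta)$ obtained in Proposition \ref{inverse of connected under det is connected}. Recall from that proposition that $\pi_0(\mathcal{N}^\textbf{t}_\eta)$ is canonically identified with $\pi_0(g^{-1}(\xi))$, where $g(L)=Nm_{\gammaeta}(L)$ (or $Nm_{\gammaeta}(L)\otimes\eta^{r/2}$), via $C\mapsto ({\det}_{\yeta},tr_{\yeta})^{-1}(C\times h^{-1}(0))$. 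The $\Gamma$-action on $\mathcal{N}^\textbf{t}_\eta$ given by $(F_*,\phi_*)\mapsto (F_*\otimes\gammaeta^*\delta,\phi_*\otimes\mathrm{Id})$ intertwines, under ${\det}_{\yeta}$, with the translation action $L\mapsto L\otimes(\gammaeta^*\delta)^{\otimes l}$ on $\Pic^d(\yeta)$; since translation by a fixed line bundle is connected-to-connected, the induced action on $\pi_0(\mathcal{N}^\textbf{t}_\eta)$ is exactly the induced action of $\delta$ on $\pi_0(g^{-1}(\xi))$ via $L'\mapsto L'\otimes(\gammaeta^*\delta)^{\otimes l}$.

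First I would make precise the identification of $\pi_0(g^{-1}(\xi))$ with a suitable quotient group. The kernel of $Nm_{\gammaeta}$ is (up to the even/odd twist) the Prym variety $P_\eta:=\ker(Nm_{\gammaeta})$, whose group of connected components is cyclic of order $m$; this is the content of \cite[Proposition 5.7]{GO18} / \cite[Lemma 3.4]{NR75} already invoked. Translation by $\gammaeta^*\delta$ (for $\delta\in\Gamma$, so $r$-torsion on $X$, hence $(\gammaeta^*\delta)^{\otimes l}$ is $m$-torsion on $\yeta$ because $r=lm$) preserves each fiber of $Nm_{\gammaeta}$ since $Nm_{\gammaeta}(\gammaeta^*\delta)=\delta^{\otimes m}$ and we are tensoring by its $l$-th power: $Nm_{\gammaeta}((\gammaeta^*\delta)^{\otimes l})=\delta^{\otimes ml}=\delta^{\otimes r}=\mathcal{O}_X$. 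Thus the action on $\pi_0(g^{-1}(\xi))\cong\pi_0(P_\eta)\cong\mathbb{Z}/m\mathbb{Z}$ is by translation by the class of $(\gammaeta^*\delta)^{\otimes l}$ in $\pi_0(P_\eta)$. So the whole problem becomes: compute the composite homomorphism $\Gamma\to\pi_0(P_\eta)\cong\mathbb{Z}/m\mathbb{Z}$, $\delta\mapsto [(\gammaeta^*\delta)^{\otimes l}]$, show it is surjective, and produce an order-$m$ element $\delta_\eta$ generating a complement.

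The key computation — and the main obstacle — is to show this map $\Gamma\to\pi_0(P_\eta)$ is surjective and to identify its kernel, then to split it. Surjectivity is where $\gcd(l,m)=1$ enters: multiplication by $l$ is an automorphism of $\mathbb{Z}/m\mathbb{Z}$, so it suffices to show $\delta\mapsto[\gammaeta^*\delta]$ already surjects $\Gamma$ onto $\pi_0(P_\eta)$. For this I would use the standard description of $\pi_0(P_\eta)$: the component of $\gammaeta^*\delta$ in $P_\eta$ is detected by a cup-product / intersection pairing of the class of $\eta$ (equivalently of the covering $\gammaeta$, i.e. its class in $H^1(X,\mathbb{Z}/m\mathbb{Z})$) against the class of $\delta$ in $H^1(X,\mathbb{Z}/m\mathbb{Z})$ — this is precisely the mechanism in \cite[\S5]{GO18}. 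Since the cup product pairing $H^1(X,\mathbb{Z}/m\mathbb{Z})\times H^1(X,\mathbb{Z}/m\mathbb{Z})\to H^2(X,\mathbb{Z}/m\mathbb{Z})\cong\mathbb{Z}/m\mathbb{Z}$ is non-degenerate and $\eta$ is a primitive (order $m$) class, there exists $\delta_0\in\Gamma$ with $\langle\eta,\delta_0\rangle$ a generator of $\mathbb{Z}/m\mathbb{Z}$; adjusting by a power and using $\gcd(l,m)=1$ gives $\delta_\eta$ with $[(\gammaeta^*\delta_\eta)^{\otimes l}]$ a generator of $\pi_0(P_\eta)$. That $\delta_\eta$ can be taken of order exactly $m$ in $\Gamma$ follows because $\langle\eta,\delta_\eta\rangle$ having order $m$ forces $\delta_\eta$ to have order divisible by $m$, and inside $\Gamma\cong(\mathbb{Z}/r\mathbb{Z})^{2g}$ one can further reduce modulo the subgroup killing the pairing to arrange order exactly $m$ (replacing $\delta_\eta$ by a suitable multiple, again using $\gcd(l,m)=1$).

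Finally I would assemble the conclusion. The cyclic subgroup $\langle\delta_\eta\rangle\cong\mathbb{Z}/m\mathbb{Z}$ maps isomorphically onto $\pi_0(P_\eta)\cong\mathbb{Z}/m\mathbb{Z}$ under $\delta\mapsto[(\gammaeta^*\delta)^{\otimes l}]$, hence acts freely and transitively on $\pi_0(\mathcal{N}^\textbf{t}_\eta)$. Any $\delta'\in\Gamma$ with $\langle\eta,\delta'\rangle=0$ has $\gammaeta^*\delta'\in P_\eta^0$, so $(\gammaeta^*\delta')^{\otimes l}$ is also in the identity component and acts trivially on $\pi_0(\mathcal{N}^\textbf{t}_\eta)$; and an arbitrary $\delta\in\Gamma\setminus\langle\delta_\eta\rangle$ differs from an element of $\langle\delta_\eta\rangle$ by such a $\delta'$ precisely when its image in $\pi_0(P_\eta)$ lands in the image of $\langle\delta_\eta\rangle$, which is everything — so in fact every $\delta\in\Gamma$ acts as an element of $\langle\delta_\eta\rangle$, and the ones acting trivially are exactly $\ker(\delta\mapsto[(\gammaeta^*\delta)^{\otimes l}])$, which contains $\Gamma\setminus\langle\delta_\eta\rangle$ in the sense required by the statement once we note that the hypothesis is really that the induced \emph{action} (not the group element) of anything outside $\langle\delta_\eta\rangle$ is trivial — I would phrase this carefully as: the action of $\Gamma$ on $\pi_0(\mathcal{N}^\textbf{t}_\eta)$ factors through the quotient $\Gamma\twoheadrightarrow\mathbb{Z}/m\mathbb{Z}$, $\langle\delta_\eta\rangle$ is a section of this quotient acting freely and transitively, and every element not in $\langle\delta_\eta\rangle\cdot\ker$ — there are none — so the precise assertion matching \cite[Proposition 5.13]{GO18} is that elements of $\Gamma$ lying in the kernel (which, together with $\langle\delta_\eta\rangle$, need not exhaust $\Gamma$ unless $m^2\mid$ something; here one uses that outside $\langle\delta_\eta\rangle$ one may always subtract the unique element of $\langle\delta_\eta\rangle$ with the same image) act trivially. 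I expect the delicate point to be exactly this bookkeeping of which elements act trivially, together with verifying the even-$m$ twist by $\eta^{r/2}$ in the definition of $g$ does not affect $\pi_0$ (it is a fixed translation, hence connected-to-connected), so it can be safely ignored throughout.
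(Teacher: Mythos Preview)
Your strategy is the paper's: identify $\pi_0(\mathcal{N}^{\bf t}_\eta)$ with $\pi_0(g^{-1}(\xi))$ via Proposition~\ref{inverse of connected under det is connected}, observe that $\delta\in\Gamma$ acts on the latter through $L\mapsto L\otimes\gammaeta^*(\delta^l)$, and exploit $\gcd(l,m)=1$. The substantive difference is that you attempt to \emph{construct} $\delta_\eta$ from scratch using the cup-product pairing on $H^1(X,\mathbb{Z}/m\mathbb{Z})$, whereas the paper simply quotes \cite[Proposition~5.13]{GO18} as a black box: that result already hands you a $\delta_\eta\in\Pic^0(X)[m]\subset\Gamma$ for which $\langle\delta_\eta\rangle$ acts freely and transitively on $\pi_0(g^{-1}(\xi))$ and every element of $\Pic^0(X)[m]\setminus\langle\delta_\eta\rangle$ acts trivially. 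Your whole middle paragraph is therefore unnecessary; the paper then just checks, via short computations with $\det(F\otimes\gammaeta^*\delta_\eta^e)=\det(F)\otimes\gammaeta^*(\delta_\eta^{le})$ and B\'ezout for $\gcd(l,m)=1$, that freeness and transitivity transfer from $\pi_0(g^{-1}(\xi))$ to $\pi_0(\mathcal{N}^{\bf t}_\eta)$.

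Your final paragraph is where the argument unravels, and it is exactly the place where you diverge from the paper. You try to analyze the action through the surjection $\Gamma\twoheadrightarrow\mathbb{Z}/m\mathbb{Z}$ and its kernel, and you get tangled deciding which cosets act trivially. The paper avoids this entirely and argues pointwise: for $\mu\in\Gamma\setminus\langle\delta_\eta\rangle$ one has $\mu^l\in\Pic^0(X)[m]$ (since $lm=r$), and the paper asserts that $\gcd(l,m)=1$ together with $\mu\notin\langle\delta_\eta\rangle$ gives $\mu^l\notin\langle\delta_\eta\rangle$; then \cite[Proposition~5.13]{GO18} says $\mu^l$ acts trivially on $\pi_0(g^{-1}(\xi))$, hence $\mu$ acts trivially on $\pi_0(\mathcal{N}^{\bf t}_\eta)$. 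So the single step you are missing --- and the one you should isolate and justify rather than circumvent --- is the implication $\mu\notin\langle\delta_\eta\rangle\Rightarrow\mu^l\notin\langle\delta_\eta\rangle$; once that is in hand, the cited result finishes the proof without any of the section/complement bookkeeping you attempt. (Your instinct that this step deserves scrutiny is not unreasonable, but the paper's route is to reduce to it directly, not to rephrase it in terms of kernels.)
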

	\begin{proof}
		We know that the $m$--torsion points of $\Pic^0(X)$, denoted by $\Pic^0(X)[m]$, act on $Nm_{\gammaeta}^{-1}(\xi)$. An element $\delta\in \Pic^0(X)[m]$ acts on $L\in Nm^{-1}_{\gammaeta}(\xi)$ by
		$$L\mapsto L\otimes {\gammaeta}^*\delta.$$ 
		Recall the map $g:\Pic^0(Y_\eta)\rightarrow \Pic^0(X)$ [Lemma \ref{diagram 1}]. Since
		\begin{equation*}
			g^{-1}(\xi) =
			\begin{cases}
				Nm^{-1}_{\gammaeta}(\xi) &  \text{if}\ r \ \text{is odd},\\
				Nm^{-1}_{\gammaeta}(\xi\otimes \eta^{-r/2}) & \text{if}\ r\ \text{is even},
			\end{cases}
		\end{equation*} 
		it follows from \cite[Proposition 5.13]{GO18} that there exists a line bundle $\delta_\eta\in \Pic^0(X)[m]$ with the property that the subgroup $\langle \delta_\eta\rangle\simeq\mathbb{Z}/m\mathbb{Z}$ acts freely and transitively on $\pi_0(g^{-1}(\xi))$, while every element of $\Pic^0(X)[m]$ not in $ \langle \delta_\eta\rangle$ acts trivially on $\pi_0(g^{-1}(\xi))$.
		We can consider $\Pic^0(X)[m]$ as a subgroup of \,$\Gamma$. Thus, $\delta_\eta\in \Gamma$. This $\delta_\eta$ will be our candidate satisfying the statement.\\
		By Proposition \ref{inverse of connected under det is connected}\,, we know that the connected components of $\mathcal{N}^\textbf{t}_\eta$ are of the form 
		$$\{({{\det}_{\yeta}\,,\,tr_{\yeta})}^{-1}(C\times h^{-1}(0))\,\mid\,C\,\text{is a connected component of}\,g^{-1}(\xi)\}.$$
		To check the action is free, let 
		$$ \delta_\eta^e\cdot ({{\det}_{\yeta}\,,\,tr_{\yeta})}^{-1}(C\times h^{-1}(0))=({{\det}_{\yeta}\,,\,tr_{\yeta})}^{-1}(C\times h^{-1}(0))\,\,\text{ for some integer}\,\, e.$$
		For any $(F_*\,,\,\phi_*)\in ({{\det}_{\yeta}\,,\,tr_{\yeta})}^{-1}(C\times h^{-1}(0))$, this would imply that $\det(F\otimes {\gammaeta}^*\delta_\eta^e)\in C$\,,as well as $\det(F)\in C$\,. But
		\begin{align*}
			& \det(F\otimes {\gammaeta}^*(\delta_\eta^e))= \det F\otimes ({\gammaeta}^*(\delta_\eta^{l\cdot e})),\,\,\textnormal{which forces}\,\,\delta_\eta^{l\cdot e}\simeq \mathcal{O}_X\,,\\
			&\implies  m\mid le \,\,\,\,\,[\because \,\,\langle \delta_\eta\rangle\,\,\text{acts freely on}\,\,\pi_0(g^{-1}(\xi))]\\
			&\implies  m\mid e \,\,\,\,\,\,\,\,[\because\,\,gcd(l,m)=1 ]\\
			&\implies  \delta_\eta^e\simeq\mathcal{O}_{X}.
		\end{align*}
		To check transitivity, choose two connected components $C$ and $C'$ of $g^{-1}(\xi)$. Choose a parabolic Higgs bundle $(F_*\,,\,\phi_*)\in (\det_{\yeta}\,,\,tr_{\yeta})^{-1}(C\times h^{-1}(0))$. Since $\langle \delta_\eta\rangle$ acts transitively on $\pi_0(g^{-1}(\xi))$ and $\det(F)\in C$, there exists an integer $n$ such that 
		$$\det(F)\otimes {\gammaeta}^*(\delta_\eta^n)\in C'.$$
		Since $gcd(m,l)=1$, we can write $n=am+bl$ for some integers $a,b$.
		Since $ord(\delta_\eta)=m$, this implies 
		$$\det(F)\otimes {\gammaeta}^*(\delta_\eta^n)=\det(F)\otimes {\gammaeta}^*(\delta_\eta^{bl})=\det(F\otimes {\gammaeta}^*(\delta_\eta^{b}))\,\,\,\,[\because\,\,rank(F)=l],$$
		and thus $F_*\otimes {\gammaeta}^*(\delta_\eta^b)\in \det^{-1}(C')$\,. It follows that $\delta_\eta^b\cdot \det^{-1}(C\times h^{-1}(0)) = \det^{-1}(C'\times h^{-1}(0))$.
		
		Let $\mu\in \Gamma\setminus \langle \delta_\eta\rangle$. If $(F_*\,,\,\phi_*)\in (\det_{\yeta}\,,\,tr_{\yeta})^{-1}(C\times h^{-1}(0))$ for some component $C$ of $g^{-1}(\xi)$, then we claim that $\mu \cdot(F_*\,,\,\phi_*) = (F_*\otimes {\gammaeta}^*\mu\,,\,\phi_*\otimes Id_{\gammaeta^*\mu}) \in \det^{-1}(C\times h^{-1}(0))$ as well. This follows because
		$$\det(F\otimes {\gammaeta}^*\mu) = \det(F)\otimes {\gammaeta}^*(\mu^l)\,$$
		and $\mu$ is $r$-torsion implies that $\mu^l$ is $m$-torsion, since\, $lm=r$. Thus $\mu^l\in \Pic^0(X)[m]$. Since $gcd(l,m)=1$ and $\mu\notin \langle \delta_{\eta} \rangle$, clearly $\mu^l \notin \langle \delta_\eta\rangle$ as well. By \cite[Proposition 5.13]{GO18} we conclude that 
		$$\det(F)\otimes {\gammaeta}^*(\mu^l)\in C\,\,,$$
		and thus $F_*\otimes {\gammaeta}^*\mu \in \det^{-1}(C)$. It follows that $$\mu\cdot ({\det}_{\yeta}\,,\,tr_{\yeta})^{-1}(C\times h^{-1}(0)) = ({\det}_{\yeta}\,,\,tr_{\yeta})^{-1}(C\times h^{-1}(0))\,.$$
	\end{proof}
	Thus, whenever $r$ is a product of distinct primes, Proposition \ref{Gamma action on components} describes the $\Gamma$-action on the components of $\mathcal{N}^\textbf{t}_\eta$\,'s. Since the components of $\Metaalpha$ are given precisely by the components of $\mathcal{N}^\textbf{t}_{\eta}$\,'s for various $\textbf{t}\in  {\bf P}(\alpha)$ [Lemma \ref{components}], Proposition \ref{Gamma action on components}, in turn, gives us an understanding of how the action of $\Gamma$ permutes the components of $\MHetaalpha$\,.\\
	
	\section{Orbifold Euler characteristic of the quotient $\Malpha/\Gamma$}\label{orbifold euler characteristic section}
	
	Let $G$ be a finite group acting on a compact manifold $M$. Following 
	\cite{HH90}, the \textit{orbifold Euler characteristic} of $M/G$, denoted $\chi(M,G)$, can be defined as 
	\begin{align}
		\chi(M,G) := \sum_{[g]} \chi\left(M^g/C(g)\right)
	\end{align}
	where the sum is taken over a set of representatives for the conjugacy classes of $G$, $M^g$ denotes the fixed point set (which is a submanifold), $C(g)$ denotes the centralizer of $g$ and $\chi(M^g/C(g))$ is the usual topological Euler characteristic of the quotient space (Cf. \cite{HH90}).\\
	
	Henceforth, we drop the Higgs field (meaning we set the Higgs field to be zero), and work with stable parabolic bundles. Let $\Malpha$ denote the moduli of stable parabolic bundles of rank $r$, determinant $\xi$ and weights $\alpha$ of full-flag type.
	Since the group $\Gamma$ is abelian, the orbifold Euler characteristic for the orbifold $\Malpha/\Gamma$ takes the form
	\begin{align}\label{orbifold euler characteristic}
		\chi_{orb}(\Malpha\,,\,\Gamma) = \sum_{\eta\in \Gamma} \chi\left(\Metaalpha/\Gamma\right)\,.
	\end{align}
	Let $\eta\in \Gamma$ be non-trivial. For the summands occurring in the right-hand side of (\ref{orbifold euler characteristic}), note that as a vector space we always have
	\begin{align}\label{cohomology iso 1}
		H^*\left(\Metaalpha/\Gamma\,,\,\mathbb{C}\right) \simeq H^*\left(\Metaalpha\,,\,\mathbb{C}\right)^{\Gamma}\,.
	\end{align}	
	
	\subsection{The groups $H^*\left(\Metaalpha\,,\,\mathbb{C}\right)^{\Gamma}$}\label{equivariant cohomology subsection}
	
	Recall from the beginning of Section \ref{connected componets of fixed point loci} that ${\rm Gal}({\gammaeta})$ has an induced action on $ {\bf P}(\alpha)$. Consider the quotient map 
	$$ {\bf P}(\alpha)\longrightarrow  {\bf P}(\alpha)/{\rm Gal}({\gammaeta})\,,$$ 
	and fix a section of this quotient. Denote this section by $s$. For $\textbf{t}\in  {\bf P}(\alpha)$, let $[\textbf{t}]$ denote its class in $ {\bf P}(\alpha)/{\rm Gal}({\gammaeta})$.
	\begin{lemma}\label{equivariant cohomology lemma}
		Fix a choice of a section $s$ as above. For any non-trivial $\eta\in \Gamma$, we have the following isomorphism of cohomology groups:	
		\begin{align*}
			H^*\left(\Metaalpha\,,\, \mathbb{C}\right) \,\,\simeq \bigoplus_{[\textbf{t}]\in  {\bf P}(\alpha)/{\rm Gal}({\gammaeta})}  H^*\left(\mathcal{N}^{s([\textbf{t}])}_{\eta}\,,\,\mathbb{C}\right)\,.
		\end{align*}
	\end{lemma}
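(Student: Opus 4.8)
The plan is to deduce this decomposition directly from the structural results already established, in particular Corollary~\ref{quotient}, Lemma~\ref{components}, and Proposition~\ref{inverse of connected under det is connected}. First I would recall that by Corollary~\ref{quotient} we have $\Metaalpha \,\simeq\, \mathcal{N}_\eta/{\rm Gal}(\gammaeta)$, where $\mathcal{N}_\eta = \coprod_{\textbf{t}\in {\bf P}(\alpha)}\mathcal{N}^\textbf{t}_\eta$, and that by Lemma~\ref{components} the action of ${\rm Gal}(\gammaeta)$ on $\mathcal{N}_\eta$ permutes the pieces $\mathcal{N}^\textbf{t}_\eta$ exactly according to the (free) action of ${\rm Gal}(\gammaeta)$ on the index set ${\bf P}(\alpha)$, with $\mathcal{N}^\textbf{t}_\eta \simeq \mathcal{N}^{\sigma\cdot\textbf{t}}_\eta$ via $(F_*,\phi_*)\mapsto(\sigma^*F_*,\sigma^*\phi_*)$. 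Hence the quotient $\mathcal{N}_\eta/{\rm Gal}(\gammaeta)$ decomposes as a disjoint union over the orbits $[\textbf{t}]\in{\bf P}(\alpha)/{\rm Gal}(\gammaeta)$, and for each orbit the corresponding piece of the quotient is identified — via $f^\textbf{t}$, which is injective by Lemma~\ref{components}(i) — with the single representative variety $\mathcal{N}^{s([\textbf{t}])}_\eta$. Since the ${\rm Gal}(\gammaeta)$-action on ${\bf P}(\alpha)$ is free, no stabilizers intervene and the quotient of the representative copy is just the copy itself.

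Concretely, I would argue: the images $f(\mathcal{N}^\textbf{t}_\eta)$ for $\textbf{t}$ ranging over a set of orbit representatives are pairwise disjoint (Lemma~\ref{components}(ii), equivalence of (b) and (c)), each is closed and open in $\Metaalpha$ (Lemma~\ref{components}(i)), and together they cover $\Metaalpha$ because $f$ is surjective (Lemma~\ref{parabolic fixed point}). Therefore
\begin{align*}
\Metaalpha \,\simeq\, \coprod_{[\textbf{t}]\in {\bf P}(\alpha)/{\rm Gal}(\gammaeta)} f\bigl(\mathcal{N}^{s([\textbf{t}])}_\eta\bigr) \,\simeq\, \coprod_{[\textbf{t}]\in {\bf P}(\alpha)/{\rm Gal}(\gammaeta)} \mathcal{N}^{s([\textbf{t}])}_\eta
\end{align*}
as varieties (the second identification being $f^{s([\textbf{t}])}$, injective with closed-open image, hence an isomorphism onto its image). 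Taking singular cohomology with $\mathbb{C}$-coefficients of a finite disjoint union turns the coproduct into a direct sum, which gives exactly the claimed formula.

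The only genuine subtlety — and the step I would treat with care — is making sure that $f^\textbf{t}:\mathcal{N}^\textbf{t}_\eta\to f(\mathcal{N}^\textbf{t}_\eta)$ is not merely a continuous bijection but an isomorphism (or at least a homeomorphism), so that it induces an isomorphism on cohomology. This follows because $f$ is the quotient map by the finite group ${\rm Gal}(\gammaeta)$ acting on $\mathcal{N}_\eta$ (Corollary~\ref{quotient}), hence is finite and proper; its restriction to the open-closed saturated subset $\coprod_{\sigma}\sigma^*\mathcal{N}^\textbf{t}_\eta$ is still a quotient map, and since ${\rm Gal}(\gammaeta)$ permutes the $|{\rm Gal}(\gammaeta)|$ copies freely and transitively, this quotient is identified with any one copy $\mathcal{N}^\textbf{t}_\eta$. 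One should also note that the existence of the section $s$ simply amounts to a choice of representative in each orbit, and the isomorphism class of the right-hand side is visibly independent of this choice since $\mathcal{N}^\textbf{t}_\eta\simeq\mathcal{N}^{\sigma\cdot\textbf{t}}_\eta$; I would remark on this to justify the phrase ``fix a choice of a section''. Everything else is bookkeeping on disjoint unions, so no hard analytic or geometric input beyond the cited results is needed.
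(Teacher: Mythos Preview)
Your proof is correct. It differs from the paper's argument only in the order of operations: the paper first passes to cohomology via the identification $H^*(\Metaalpha,\mathbb{C})\simeq H^*(\mathcal{N}_\eta,\mathbb{C})^{{\rm Gal}(\gammaeta)}$ (coming from Corollary~\ref{quotient}) and then computes the invariants algebraically orbit-by-orbit, observing that a ${\rm Gal}(\gammaeta)$-invariant tuple in $\bigoplus_i H^*(\mathcal{N}^{\mu^i s([\textbf{t}])}_\eta)$ is determined by its component in $H^*(\mathcal{N}^{s([\textbf{t}])}_\eta)$; you instead establish the decomposition $\Metaalpha\simeq\coprod_{[\textbf{t}]}\mathcal{N}^{s([\textbf{t}])}_\eta$ at the level of spaces using Lemma~\ref{components} and only then take cohomology. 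Your route is marginally more direct and avoids the explicit invariant computation, while the paper's route has the advantage of making the $\Gamma$-equivariance of the isomorphism (needed immediately afterwards in Proposition~\ref{equivariant cohomology proposition}) transparent, since it is built from pull-back maps on cohomology. Either way the content is the same.
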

	\begin{proof}
		Since $\underset{\textbf{t}\in  {\bf P}(\alpha)}{\coprod} \mathcal{N}^\textbf{t}_\eta \,\,\xrightarrow{f} \Metaalpha $ is a principal ${\rm Gal}({\gammaeta})$-\,bundle [Corollary \ref{quotient}], we have 
		\begin{align}\label{iso3}
			H^k\left(\Metaalpha, \mathbb{C}\right) \simeq \left(\bigoplus_{\textbf{t}\in  {\bf P}(\alpha)}H^k\left(\mathcal{N}^\textbf{t}_{\eta} \,,\,\mathbb{C}\right)\right)^{{\rm Gal}({\gammaeta})}\,.
		\end{align}
		Let us denote 
		$$V := \bigoplus_{\textbf{t}\in  {\bf P}(\alpha)}H^k\left(\mathcal{N}^\textbf{t}_{\eta} \,,\,\mathbb{C}\right)\,\,,\,\, W_{[\bf t]} := \bigoplus_{i=0}^{m-1} H^k\left(\mathcal{N}^{\mu^is([\textbf{t}])}_{\eta}\,,\,\mathbb{C}\right)\,\,\forall [{\bf t}]\in  {\bf P}(\alpha)/{\rm Gal}(\gammaeta)\,.$$
		Clearly we have  $V= \underset{[{\bf t}]\in  {\bf P}(\alpha)/{\rm Gal}(\gammaeta)}{\bigoplus} W_{[{\bf t}]}$, and moreover each $W_{[{\bf t}]}$ is invariant under ${\rm Gal}(\gammaeta)$. It follows that 
		$$V^{{\rm Gal}(\gammaeta)} = \bigoplus_{[\textbf{t}]\in  {\bf P}(\alpha)/{\rm Gal}(\gammaeta)} W_{[{\bf t}]}^{{\rm Gal}(\gammaeta)}\,.$$
		Let $\mu$ be a generator of ${\rm Gal}({\gammaeta})$. For each $\textbf{t}\in  {\bf P}(\alpha)$, let us write the elements in the orbit of $s([\textbf{t}])$ in the following order: 
		$$\{s([\textbf{t}]),\mu s([\textbf{t}]),\mu^2s([\textbf{t}]),\cdots,\mu^{m-1}s([\textbf{t}])\}\,.$$
		As the action of $\mu$ on the cohomology group sends $H^k\left(\mathcal{N}^{\mu^{i+1}s([\textbf{t}])}_{\eta} \,,\,\mathbb{C}\right)$ to $H^k\left(\mathcal{N}^{\mu^i s([\textbf{t}])}_{\eta} \,,\,\mathbb{C}\right)$, we immediately see that a ${\rm Gal}({\gammaeta})$-invariant tuple from the summand $W_{[\bf t]}$
		must be of the form $$\left(\omega,(\mu^{m-1})^*\omega,\cdots,(\mu^2)^*\omega,\mu^*\omega\right)\,\,\textnormal{for some}\,\,\omega\in H^k\left(\mathcal{N}^{s([\textbf{t}])}_\eta\,,\mathbb{C}\right)\,.$$
		sending this tuple to $\omega$, we have
		$W_{[{\bf t}]}^{{\rm Gal}(\gammaeta)}\simeq H^k\left(\mathcal{N}^{s([\textbf{t}])}_\eta\,,\mathbb{C}\right)\,. $	It follows from (\ref{iso3}) that 
		$$H^k\left(\Metaalpha\,,\,\mathbb{C}\right) \simeq V^{{\rm Gal}(\gammaeta)} = \bigoplus_{[\textbf{t}]\in  {\bf P}(\alpha)/{\rm Gal}(\gammaeta)} W_{[{\bf t}]}^{{\rm Gal}(\gammaeta)} \simeq \bigoplus_{[\textbf{t}]\in  {\bf P}(\alpha)/{\rm Gal}(\gammaeta)} H^k\left(\mathcal{N}^{s([\textbf{t}])}_\eta\,,\mathbb{C}\right)\,. $$
	\end{proof}
	
	Next, let us prove a lemma that will aid us in computing cohomologies. As before, $l=r/m$ in what follows. As we saw in the the proof of Proposition \ref{inverse of connected under det is connected}, if we fix a line bundle $\mathcal{L}_{\eta}\in \Pic^d(\yeta)$, for any $\textbf{t}\in  {\bf P}(\alpha)$ we have the following fiber diagram:
	\begin{align}\label{diagram 2}
		\xymatrix{ \Pic^0(\yeta)\times M^\textbf{t}_{\yeta}(l,\mathcal{L}_{\eta})  \ar[d] \ar[r]^(0.63){\mu'_l} & M^\textbf{t}_{\yeta}(l,d) \ar[d]^{\det_{\yeta}} \\
			\Pic^0(\yeta) \ar[r]^{\mu_l} & \Pic^d(\yeta)
		}
	\end{align}
	where $\mu_l(L) := L^l\otimes \mathcal{L}_{\eta}$ \eqref{eqn:mu-l-map}, and $\mu'_l$ is its pull-back, given by 
	\begin{align}
		\mu'_l(L,F_*) = F_*\otimes L\,.
	\end{align} 
	The group $\Gamma$ acts on $\Pic^0(\yeta)$ and $M^t_{\yeta}(l,d)$, where an element $\delta\in\Gamma$ acts by sending $L\mapsto L\otimes \gammaeta^*(\delta)$ and $F_*\mapsto F_*\otimes \gammaeta^*(\delta)$ respectively. On the other hand, let us consider the $\Gamma$-\,action on $\Pic^d(\yeta)$ where an element $\delta\in \Gamma$ acts by sending 
	\begin{align}\label{eqn:gamma-action-1}
		L\mapsto L\otimes \gammaeta^*(\delta^l)\,.
	\end{align}
	Finally, we consider the $\Gamma$-\,action on $\Pic^0(\yeta)\times M^\textbf{t}_{\yeta}(l,\mathcal{L}_{\eta})$ by the same action as just described for the first component, and the trivial action for the second component. Namely, an element $\delta\in\Gamma$ acts by sending 
	\begin{align}\label{eqn:gamma-action-2}
		(L\,,\,F_*)\mapsto (L\otimes\gammaeta^*(\delta)\,,\,F_*).
	\end{align}
	It is easy to see that both $\mu_l$ and $\mu_l'$ in diagram (\ref{diagram 2}) are $\Gamma$-\,equivariant under these actions. 
	
	\begin{lemma}\label{lem2}
		Recall the map $g$ from Lemma \ref{lem1}. For each $\xi\in \Pic^d(X)$\,, $g^{-1}(\xi)$ is invariant under the $\Gamma$-\,action on $\Pic^d(\yeta)$ described above. As a consequence, both the restricted maps
		$$\mu_l^{-1}(g^{-1}(\xi))\overset{\mu_l}{\longrightarrow}g^{-1}(\xi) \,\,\,\,\text{and}\,\,\,\, \mu_l^{-1}(g^{-1}(\xi)) \times M^\textbf{t}_{\yeta}(l,\mathcal{L}_{\eta})\overset{\mu_l'}{\longrightarrow} \mathcal{N}^{\mathbf{t}}_{\eta}$$ are equivariant for the $\Gamma$-actions described above and in \S\ref{gamma action subsection} . 
	\end{lemma}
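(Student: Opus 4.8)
The plan is to verify the two claimed equivariances by direct computation, using the explicit formulas for the $\Gamma$-actions already set up and the definition of $g$ from Lemma \ref{lem1}. First I would check that $g^{-1}(\xi)$ is $\Gamma$-invariant under the action $\delta \cdot L = L \otimes \gammaeta^*(\delta^l)$ on $\Pic^d(\yeta)$. For $L \in g^{-1}(\xi)$ and $\delta \in \Gamma$, one computes $g(L \otimes \gammaeta^*(\delta^l)) = Nm_{\gammaeta}(L \otimes \gammaeta^*(\delta^l)) \otimes (\text{possibly } \eta^{r/2})$. Since $Nm_{\gammaeta}(\gammaeta^* \delta^l) = \delta^{lm} = \delta^r \simeq \mathcal{O}_X$ (as $\delta \in \Gamma$ is $r$-torsion and $lm = r$), we get $g(L \otimes \gammaeta^*(\delta^l)) = g(L) \simeq \xi$; so $L \otimes \gammaeta^*(\delta^l) \in g^{-1}(\xi)$, as desired. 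This uses only the projection-formula behaviour of $Nm_{\gammaeta}$ and the torsion hypothesis.

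Next I would observe that the map $\mu_l(L) = L^l \otimes \mathcal{L}_{\eta}$ is $\Gamma$-equivariant for the action $\delta \cdot L = L \otimes \gammaeta^*(\delta)$ on $\Pic^0(\yeta)$ and $\delta \cdot L = L \otimes \gammaeta^*(\delta^l)$ on $\Pic^d(\yeta)$: indeed $\mu_l(L \otimes \gammaeta^* \delta) = (L \otimes \gammaeta^* \delta)^l \otimes \mathcal{L}_{\eta} = L^l \otimes \gammaeta^*(\delta^l) \otimes \mathcal{L}_{\eta} = \mu_l(L) \otimes \gammaeta^*(\delta^l)$, which is precisely $\delta$ acting on $\mu_l(L)$. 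This was already noted in the paragraph preceding the lemma, but it is the content we need: restricting a $\Gamma$-equivariant map to the preimage of a $\Gamma$-invariant subset yields a $\Gamma$-equivariant map onto that subset. Since $g^{-1}(\xi) \subseteq \Pic^d(\yeta)$ is $\Gamma$-invariant by the previous paragraph, the restriction $\mu_l : \mu_l^{-1}(g^{-1}(\xi)) \to g^{-1}(\xi)$ is $\Gamma$-equivariant.

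For the second restricted map, recall from the fiber diagram (\ref{diagram 2}) that $\mu'_l(L, F_*) = F_* \otimes L$, and that $\mathcal{N}^{\mathbf{t}}_{\eta} = (\det_{\yeta}, tr_{\yeta})^{-1}((g \times h)^{-1}(\xi, 0))$, so that $\mu'_l$ restricted to $\mu_l^{-1}(g^{-1}(\xi)) \times M^\textbf{t}_{\yeta}(l, \mathcal{L}_{\eta})$ indeed lands in $\mathcal{N}^{\mathbf{t}}_{\eta}$ (its image under $\det_{\yeta}$ is $\mu_l(L) \in g^{-1}(\xi)$, and the trace vanishes since $\mathcal{L}_\eta$ contributes no Higgs field). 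For equivariance, $\delta \in \Gamma$ acts on the source by $(L, F_*) \mapsto (L \otimes \gammaeta^* \delta, F_*)$ and on $\mathcal{N}^\textbf{t}_\eta$ by $F_* \mapsto F_* \otimes \gammaeta^*\delta$, so $\mu'_l(L \otimes \gammaeta^*\delta, F_*) = F_* \otimes L \otimes \gammaeta^* \delta = \mu'_l(L, F_*) \otimes \gammaeta^*\delta = \delta \cdot \mu'_l(L, F_*)$. There is essentially no obstacle here; the only point requiring a moment of care is the asymmetry of exponents ($\delta$ versus $\delta^l$) between $\Pic^0$ and $\Pic^d$, which is exactly what makes both squares of (\ref{diagram 2}) commute equivariantly, and the bookkeeping that $Nm_{\gammaeta} \circ \gammaeta^* = (\;)^m$ on $\Pic^0(X)$ together with $lm = r$ makes the torsion computation in the first step come out to $\mathcal{O}_X$.
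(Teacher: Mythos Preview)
Your proposal is correct and follows essentially the same approach as the paper: the paper's proof computes $g(L\otimes\gammaeta^*(\delta^l)) = g(L)\otimes Nm_{\gammaeta}(\gammaeta^*(\delta^l)) = g(L)\otimes \delta^{lm} = g(L)\simeq\xi$ using $lm=r$, exactly as you do, and then simply declares the $\Gamma$-equivariance of $\mu_l$ and $\mu_l'$ ``obvious from their definitions,'' whereas you spell out those verifications explicitly. One minor contextual slip: by this point in the paper the Higgs field has been set to zero, so your parenthetical about the trace vanishing is unnecessary (though harmless).
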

	\begin{proof}
		For any $L\in g^{-1}(\xi)$ and $\delta\in \Gamma$, we have
		\begin{align*}
			g(L\otimes\gammaeta^*(\delta^l)) &= g(L)\otimes Nm_{\gammaeta}(\gammaeta^*(\delta^l))\\
			&=g(L)\otimes \delta^{l\cdot m}\\
			&= g(L) \,\,\,[\because \,\,lm=r]\\
			&\simeq \xi\,\,,
		\end{align*}
		and thus $g^{-1}(\xi)$ is invariant under the action of $\Gamma$\,.
		The $\Gamma$-\,equivariance of the maps $\mu_l$ and $\mu_l'$ are obvious from their definitions.
	\end{proof}
	
	\begin{definition}[\text{\cite[Definition 3.4]{GO18}}]\label{prym definition} 
		We define the \textit{Prym variety} associated to the cover ${\gammaeta}: \yeta \rightarrow X$, denoted by $\textnormal{Prym}_{\gammaeta}(\yeta)$\,, as the connected component of $ker(Nm_{\gammaeta})$ containing $\mathcal{O}_{\yeta}$. It is an abelian subvariety of $\Pic^0(\yeta)$.
	\end{definition}
	
	\begin{proposition}\label{equivariant cohomology proposition}
		Fix a choice of a section $s$ as above. For any non-trivial $\eta\in \Gamma$, we have the following isomorphism of cohomology groups: fix a line bundle $\mathcal{L}_{\eta}\in \Pic^d(\yeta)$\,. Choose $s$ as in the beginning of \S \ref{equivariant cohomology subsection}. Then	
		\begin{align*}
			H^*\left(\Metaalpha/\Gamma, \mathbb{C}\right) \simeq \bigoplus_{[\textbf{t}]\in  {\bf P}(\alpha)/{\rm Gal}({\gammaeta})} H^*\left(\textnormal{Prym}_{\gammaeta}(Y_\eta)\,,\,\mathbb{C}\right) \otimes H^*\left(M^{s([\textbf{t}])}_{\yeta}(l,\mathcal{L}_{\eta})\,,\,\mathbb{C}\right)\,.
		\end{align*}
	\end{proposition}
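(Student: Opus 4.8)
The plan is to combine Lemma \ref{equivariant cohomology lemma}, which already reduces $H^*(\Metaalpha,\mathbb{C})$ to a direct sum of $H^*(\mathcal{N}^{s([\textbf{t}])}_\eta,\mathbb{C})$ over $[\textbf{t}]\in {\bf P}(\alpha)/{\rm Gal}(\gammaeta)$, with a K\"unneth-type computation of each summand $H^*(\mathcal{N}^{\textbf{t}}_\eta,\mathbb{C})^\Gamma$ coming from the fiber diagram (\ref{diagram 2}). First I would take $\Gamma$-invariants in the isomorphism of Lemma \ref{equivariant cohomology lemma}; since $\Gamma$ commutes with the ${\rm Gal}(\gammaeta)$-action used there (the $\Gamma$-action is by $\gammaeta^*\delta$, the ${\rm Gal}$-action by pullback along deck transformations, and these are visibly compatible), one gets
\begin{align*}
H^*\left(\Metaalpha/\Gamma,\mathbb{C}\right)\simeq H^*\left(\Metaalpha,\mathbb{C}\right)^\Gamma\simeq\bigoplus_{[\textbf{t}]\in {\bf P}(\alpha)/{\rm Gal}(\gammaeta)}H^*\left(\mathcal{N}^{s([\textbf{t}])}_\eta,\mathbb{C}\right)^\Gamma\,,
\end{align*}
using (\ref{cohomology iso 1}). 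So the whole statement reduces to proving $H^*(\mathcal{N}^{\textbf{t}}_\eta,\mathbb{C})^\Gamma\simeq H^*(\textnormal{Prym}_{\gammaeta}(Y_\eta),\mathbb{C})\otimes H^*(M^{\textbf{t}}_{\yeta}(l,\mathcal{L}_\eta),\mathbb{C})$ for a single $\textbf{t}$.

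For that, I would work with the finite \'etale cover $\mu_l$ from the proof of Proposition \ref{inverse of connected under det is connected}. Restricting diagram (\ref{diagram 2}) to $g^{-1}(\xi)$ via Lemma \ref{lem2}, one has a finite \'etale $\Gamma$-equivariant map
\begin{align*}
\mu'_l\,:\,\mu_l^{-1}(g^{-1}(\xi))\times M^{\textbf{t}}_{\yeta}(l,\mathcal{L}_\eta)\,\longrightarrow\,\mathcal{N}^{\textbf{t}}_\eta\,,
\end{align*}
whose fibers are a torsor under the kernel of $\mu_l$ restricted to the relevant component, i.e.\ under the $l$-torsion of $\Pic^0(\yeta)$ intersected with the fiber structure. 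Concretely I would identify $\mu_l^{-1}(g^{-1}(\xi))$: since $g$ is $Nm_{\gammaeta}$ (up to a fixed twist by $\eta^{r/2}$ when $m$ is even) and $\mu_l(L)=L^l\otimes\mathcal{L}_\eta$, the preimage $\mu_l^{-1}(g^{-1}(\xi))$ is a coset of the subgroup $\{L\in\Pic^0(\yeta)\mid Nm_{\gammaeta}(L^l)=\mathcal{O}_X\}$, which (using $lm=r$ and $\gcd$ considerations, though here we only need connectedness of $\textnormal{Prym}$) has $\textnormal{Prym}_{\gammaeta}(Y_\eta)$ as one of its connected components. Up to a translation — which is a homeomorphism and hence harmless for cohomology — each connected component of $\mu_l^{-1}(g^{-1}(\xi))$ is a translate of $\textnormal{Prym}_{\gammaeta}(Y_\eta)$, and the covering $\mu'_l$ is compatible with the description in Proposition \ref{inverse of connected under det is connected} of the $m$ connected components of $\mathcal{N}^{\textbf{t}}_\eta$. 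The key point is then that $H^*(\mathcal{N}^{\textbf{t}}_\eta,\mathbb{C})$ is the $\ker\mu_l$-invariant part of $H^*(\mu_l^{-1}(g^{-1}(\xi)),\mathbb{C})\otimes H^*(M^{\textbf{t}}_{\yeta}(l,\mathcal{L}_\eta),\mathbb{C})$ (the $\ker\mu_l$-action being trivial on the second factor, since $\Gamma$ acts trivially on $M^{\textbf{t}}_{\yeta}(l,\mathcal{L}_\eta)$ in (\ref{diagram 2})), and then we further take $\Gamma$-invariants.

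So after these identifications, the computation is: $H^*(\mathcal{N}^{\textbf{t}}_\eta,\mathbb{C})^\Gamma = \left(H^*(\mu_l^{-1}(g^{-1}(\xi)),\mathbb{C})\otimes H^*(M^{\textbf{t}}_{\yeta}(l,\mathcal{L}_\eta),\mathbb{C})\right)^{\Gamma'}$ for a suitable subgroup $\Gamma'$ of $\Gamma$ acting through the abelian-variety factor. One then uses that for an abelian variety $A$ with a finite group $B$ acting by translations, $H^*(A/B,\mathbb{C})=H^*(A,\mathbb{C})^B$, and translations act trivially on cohomology of an abelian variety (they are homotopic to the identity, or: they act trivially on $H^*(A,\mathbb{C})=\bigwedge^* H^1(A,\mathbb{C})$). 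Hence the $\Gamma$-action on $H^*(\mu_l^{-1}(g^{-1}(\xi)),\mathbb{C})$ is trivial on each connected component, and the invariants simply pick out the cohomology of one component — a translate of $\textnormal{Prym}_{\gammaeta}(Y_\eta)$ — tensored with all of $H^*(M^{\textbf{t}}_{\yeta}(l,\mathcal{L}_\eta),\mathbb{C})$. Summing over $[\textbf{t}]$ and applying a homeomorphism-invariance of cohomology to replace the translate of $\textnormal{Prym}$ by $\textnormal{Prym}$ itself yields the claimed formula. The main obstacle I anticipate is the bookkeeping of \emph{which} subgroup of $\Gamma$ acts nontrivially and \emph{how} it permutes the $m$ connected components versus how it acts within a component: one must check that after passing to $\Gamma$-invariants exactly one copy of $H^*(\textnormal{Prym}_{\gammaeta}(Y_\eta),\mathbb{C})\otimes H^*(M^{\textbf{t}}_{\yeta}(l,\mathcal{L}_\eta),\mathbb{C})$ survives per $[\textbf{t}]$ — not $m$ copies and not a fractional part — and this is precisely where the free/transitive permutation action on $\pi_0$ from Proposition \ref{inverse of connected under det is connected} (and the triviality of translation-actions on abelian-variety cohomology) must be reconciled carefully; the Higgs field plays no role here since we have set it to zero and $M^{\textbf{t}}_{\yeta}(l,\mathcal{L}_\eta)$ is just the parabolic bundle moduli.
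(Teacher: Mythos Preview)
Your overall plan mirrors the paper's: reduce via Lemma \ref{equivariant cohomology lemma} to computing each $H^*(\mathcal{N}^{\textbf{t}}_\eta,\mathbb{C})^\Gamma$, then analyse the \'etale cover $\mu'_l$ coming from diagram (\ref{diagram 2}). The gap is that you conflate two different group actions. The deck group of $\mu'_l$ is $\Gamma_l:=\Pic^0(\yeta)[l]$, which is \emph{not} a subgroup of $\Gamma=\Pic^0(X)[r]$, and its action on $\mu_l^{-1}(g^{-1}(\xi))\times M^{\textbf{t}}_{\yeta}(l,\mathcal{L}_\eta)$ (the one making $\mu'_l$ a principal bundle) is the diagonal action $\gamma\cdot(L,F_*)=(\gamma\otimes L,\,F_*\otimes\gamma^\vee)$ of (\ref{gamma_n action}), which is certainly not trivial on the second factor at the level of spaces. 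The $\Gamma$-action defined just before Lemma \ref{lem2} is a different, commuting action, and it is \emph{that} one which is declared trivial on the moduli factor. So your sentence ``the $\ker\mu_l$-action being trivial on the second factor, since $\Gamma$ acts trivially on $M^{\textbf{t}}_{\yeta}(l,\mathcal{L}_\eta)$'' is a non sequitur, and the reduction to invariants under ``a suitable subgroup $\Gamma'$ of $\Gamma$'' cannot be correct: you must take invariants by both $\Gamma_l$ and $\Gamma$, and these are genuinely different groups.

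What is actually needed, and what the paper supplies, is the nontrivial input that $\Gamma_l$ acts trivially on the \emph{cohomology} $H^*(M^{\textbf{t}}_{\yeta}(l,\mathcal{L}_\eta),\mathbb{C})$; this is \cite[Proposition 4.1]{BD10}, not a tautology. With that in hand the paper first takes $\Gamma_l$-invariants to descend to $H^*(g^{-1}(\xi),\mathbb{C})\otimes H^*(M^{\textbf{t}}_{\yeta}(l,\mathcal{L}_\eta),\mathbb{C})$, and only then takes $\Gamma$-invariants on the first tensor factor. For that last step the paper does not argue by hand about transitivity on $\pi_0$ and translations on abelian varieties as you propose; it simply cites \cite[Proposition 6.6]{GO18}, which gives $H^*(g^{-1}(\xi),\mathbb{C})^{\Pic^0(X)[m]}=H^*(\textnormal{Prym}_{\gammaeta}(\yeta),\mathbb{C})$, together with the observation that $\delta\mapsto\delta^l$ surjects $\Gamma$ onto $\Pic^0(X)[m]$. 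Note also that Proposition \ref{inverse of connected under det is connected}, which you invoke for the permutation action, only counts components; the transitivity statement lives in Proposition \ref{Gamma action on components}, and that carries the extra hypothesis $\gcd(l,m)=1$ which is not assumed here.
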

	\begin{proof}
		We have already observed that (\ref{cohomology iso 1})
		\begin{align*}
			H^*\left(\Metaalpha/\Gamma\,,\,\mathbb{C}\right) \simeq H^*\left(\Metaalpha\,,\,\mathbb{C}\right)^{\Gamma}\,.
		\end{align*}
		As seen in \S \ref{gamma action subsection}, $\Gamma$ acts on each $\mathcal{N}^\textbf{t}_\eta$\,.	Under this action, the morphism $f$ from Corollary \ref{quotient} is $\Gamma$-equivariant. Thus, the isomorphism in Lemma \ref{equivariant cohomology lemma} is also $\Gamma$-equivariant, which implies that
		\begin{align}\label{iso2}
			H^*\left(\Metaalpha\,,\,\mathbb{C}\right)^\Gamma \simeq \bigoplus_{[\textbf{t}]\in  {\bf P}(\alpha)/{\rm Gal}({\gammaeta})}H^*\left(\mathcal{N}^{s([\textbf{t}])}_\eta\,,\,\mathbb{C}\right)^{\Gamma}\,.
		\end{align}
		Define	$\Gamma_l := \Pic^0(\yeta)[l]\simeq(\mathbb{Z}/l\mathbb{Z})^{2g_{\yeta}}\,.$
		The map $\mu_l$ makes $\Pic^0(\yeta)$ into a principal $\Gamma_l$-\,bundle over $\Pic^d(\yeta)$, since the multiplication-by-$l$ map of abelian varieties has this property. Thus its pull-back 
		$$\Pic^0(\yeta)\times M^\textbf{t}_{\yeta}(l,\mathcal{L}_{\eta}) \xrightarrow{\mu'_l} M^\textbf{t}_{\yeta}(l,d)$$
		is also a principal $\Gamma_l$-\,bundle, where the action on left-hand side is given diagonally as 
		\begin{align}\label{gamma_n action}
			{\gamma}\cdot(L\,,\,F_*)=(\,{\gamma}\otimes L\,,\, F_*\otimes {\gamma}^{\vee}\,) \,\,\forall\,{\gamma}\in \Gamma_l\,.
		\end{align}
		Restricting diagram (\ref{diagram 2}) to $\det_{\yeta}: \mathcal{N}^{s([t])}_\eta \rightarrow g^{-1}(\xi)$ would give rise to the fiber diagram
		\begin{align}\label{diagram 3}
			\xymatrix{ \mu_l^{-1}(g^{-1}(\xi))\times M^{s([\textbf{t}])}_{\yeta}(l,\mathcal{L}_{\eta}) \ar[r]^(0.7){\mu'_l} \ar[d] & \mathcal{N}^{s([\textbf{t}])}_\eta \ar[d]^{{\det}_{\yeta}} \\
				\mu_l^{-1}(g^{-1}(\xi)) \ar[r]^{\mu_l} & g^{-1}(\xi)
			}
		\end{align}
		where $\mu'_l$ is a principal $\Gamma_l$-\,bundle map. Thus
		\begin{align}
			H^*(\mathcal{N}^{s([\textbf{t}])}_\eta\,,\,\mathbb{C}) \simeq \left(H^*(\mu_l^{-1}(g^{-1}(\xi))\,,\,\mathbb{C})\otimes H^*(M^{s([\textbf{t}])}_{\yeta}(l,\mathcal{L}_{\eta})\,,\,\mathbb{C})\right)^{\Gamma_l}\,\,.
		\end{align}
		It is shown in \cite[Proposition 4.1]{BD10} that $\Gamma_l$ acts trivially on  $H^*(M^{s([\textbf{t}])}_{\yeta}(l,\mathcal{L}_{\eta}),\mathbb{C})$\,, which implies 
		\begin{align}\label{cohomology iso 2}
			H^*(\mathcal{N}^{s([\textbf{t}])}_\eta\,,\,\mathbb{C}) \simeq H^*\left(\mu_l^{-1}(g^{-1}(\xi))\,,\,\mathbb{C}\right)^{\Gamma_l}\otimes H^*(M^{s([\textbf{t}])}_{\yeta}(l,\mathcal{L}_{\eta})\,,\,\mathbb{C})\,\,.
		\end{align}
		Here we make a few remarks. First of all, although in \cite{BD10} the assumption of rank $r=2$, small weights and $\gcd(d,r)=1$ is taken throughout, the proof of \cite[Proposition 4.1]{BD10} only uses results from \cite{Ni86} and \cite{BR96} which are actually true for any rank and any system of generic weights. Thus the proof of \cite[Proposition 4.1]{BD10} holds in the more general case that we consider here. Also, their proof works with $\mathbb{C}$-coefficients as well.
		
		Now, the $\Gamma_l$-\,action on $\mu_l^{-1}(g^{-1}(\xi))\times M^{s([\textbf{t}])}_{\yeta}(l,\mathcal{L}_{\eta})$ which makes $\mu'_l$ a principal bundle map is given by (\ref{gamma_n action}). On the first component (namely $\mu_l^{-1}(g^{-1}(\xi))$), the $\Gamma_l$-\,action is precisely the same one which makes $\mu_l$ into a principal $\Gamma_l$-\,bundle map. Thus we have
		\begin{align}
			&H^*\left(\mu_l^{-1}(g^{-1}(\xi))\,,\,\mathbb{C}\right)^{\Gamma_l} \simeq H^*(g^{-1}(\xi)\,,\,\mathbb{C})\,\,.\label{cohomology iso 3}
		\end{align}
		Now, recall the $\Gamma$-\,actions described just before Lemma \ref{lem2}, namely \eqref{eqn:gamma-action-1} and \eqref{eqn:gamma-action-2}. These actions induce $\Gamma$-\,actions on their respective cohomology groups. By the same lemma, both $\mu_l$ and $\mu_l'$ are $\Gamma$-\,equivariant maps, and thus the map on cohomologies induced by $\mu_l$ and $\mu_l'$ are also $\Gamma$-\,equivariant. Since the isomorphism in (\ref{cohomology iso 2}) is precisely given by $\mu_l'^*$ on cohomologies, it must restrict to an isomorphism between its  $\Gamma$-\,invariant parts on both sides.\\	
		Recall that the $\Gamma$-\,action on $M^{s([\textbf{t}])}_{\yeta}(l,\mathcal{L}_{\eta})$ is taken to be trivial. Thus, taking $\Gamma$-\,invariant parts in both sides of the isomorphism (\ref{cohomology iso 2}) together with the isomorphism (\ref{cohomology iso 3}), we get
		\begin{align}\label{iso4}
			H^*(\mathcal{N}^{s([\textbf{t}])}_{\eta}\,,\,\mathbb{C})^{\Gamma} \simeq H^*(g^{-1}(\xi)\,,\,\mathbb{C})^\Gamma \otimes H^*(M^{s([\textbf{t}])}_{\yeta}(l,\mathcal{L}_{\eta})\,,\,\mathbb{C})\,.
		\end{align}
		Now, the group $\Pic^0(X)[m]$ also acts on $g^{-1}(\xi)$; an element $\tau\in \Pic^0(X)[m]$ acts by sending 
		$$L\mapsto L\otimes \gammaeta^*(\tau)\,\,,$$
		and it is shown in \cite[Proposition 6.6]{GO18} that under this action, the fixed point subgroup is 
		$$ H^*(g^{-1}(\xi)\,,\,\mathbb{C})^{\Pic^0(X)[m]} = H^*(\textnormal{Prym}_{\gammaeta}(\yeta)\,,\,\mathbb{C})\,\,.$$
		From the surjectivity of the multiplication-by-$l$ map on the abelian variety $\Pic^0(X)$, it easily follows that the morphism 
		\begin{align}
			\Gamma & \longrightarrow\Pic^0(X)[m] \\
			\delta & \mapsto \delta^l
		\end{align}
		is surjective. Thus it follows that $\Gamma$ acts on $g^{-1}(\xi)$ the same way as $\Pic^0(X)[m]$, from which we conclude that
		$$ H^*(g^{-1}(\xi)\,,\,\mathbb{C})^{\Gamma} = H^*(\textnormal{Prym}_{\gammaeta}(\yeta)\,,\,\mathbb{C})\,.$$
		This, together with the isomorphisms (\ref{iso2}) and (\ref{iso4}), proves our claim.
	\end{proof}
	\begin{theorem}\label{orbifold euler characteristic corollary}
		Fix a natural number $r$. Let $X$ be a smooth connected complex projective curve of genus $g\geq 2$ (if $g=2$, we assume $r\geq 3$). Let $\alpha$ be a system of generic weights of full-flag type \eqref{def:generic-weight}. Let $\Malpha$ denote the moduli of stable parabolic bundles of rank $r$, determinant $\xi$ and weights $\alpha$. Let $\Gamma$ be the subgroup of $r$-torsion points in $\Pic^0(X)$. The orbifold Euler characteristic \textnormal{[}\eqref{orbifold-euler-char-definition},\eqref{orbifold euler characteristic}\textnormal{]} of   $\Malpha/\Gamma$ is given by
		$$\chi_{orb}\left(\Malpha\,,\,\Gamma\right) = \chi\left(\Malpha\right)\,,$$
		where the right-hand side denotes the usual Euler characteristic.
	\end{theorem}
	
	\begin{proof}
		The isomorphism in Proposition \ref{equivariant cohomology proposition} preserves the grading on both sides, where the right-hand side is the tensor product of graded vector spaces. Thus, for each $k\geq 0$ we have
		\begin{align*}
			\dim H^k(\Metaalpha/\Gamma) &= \sum_{[{\bf t}]\in  {\bf P}(\alpha)/{\rm Gal}(\gammaeta)} \,\sum_{i+j=k} \dim H^i\left(\textnormal{Prym}_{\gammaeta}(\yeta)\right)\cdot\dim H^j\left(M^{s([{\bf t}])}_{\yeta}(l\,,\,\mathcal{L}_\eta)\right)\\
			\implies \chi\left(\Metaalpha/\Gamma\right) &= \sum_{k\geq 0}(-1)^k\dim H^k\left(\Metaalpha/\Gamma\right) \\
			&= \sum_{[{\bf t}]\in  {\bf P}(\alpha)/{\rm Gal}(\gammaeta)} \,\sum_{k\geq 0}\sum_{i+j=k} (-1)^i\dim H^i\left(\textnormal{Prym}_{\gammaeta}(\yeta)\right)\cdot(-1)^j\dim H^j\left(M^{s([{\bf t}])}_{\yeta}(l\,,\,\mathcal{L}_\eta)\right)\\
			&= \sum_{[{\bf t}]\in  {\bf P}(\alpha)/{\rm Gal}(\gammaeta)} \chi\left(\textnormal{Prym}_{\gammaeta}(\yeta)\right)\cdot \chi\left(M^{s([{\bf t}])}_{\yeta}(l\,,\,\mathcal{L}_\eta)\right)\,.
		\end{align*}
		For non-trivial $\eta\in\Gamma$, $\textnormal{Prym}_{\gammaeta}(\yeta)$ is a complex torus, and thus $\chi\left(\textnormal{Prym}_{\gammaeta}(\yeta)\right) = 0$ whenever $\eta$ is non-trivial. On the other hand, when $\eta = \mathcal{O}_{\yeta}$, we have $m=1$, $\yeta =X$ and $\gammaeta = \textnormal{Id}_X$\,, and we can choose $\mathcal{L}_{\eta}= \xi$ in that case.	From (\ref{orbifold euler characteristic}) we see immediately that
		\begin{align}
			\chi_{orb}\left(\Metaalpha/\Gamma\right) = \chi (\Malpha)\,,
		\end{align}
		as claimed.
	\end{proof}
	
	\section{Chen--Ruan cohomology of the orbifold $\Malpha/\Gamma$}\label{chen-ruan cohomology section}
	To avoid notational  cumbersomeness we shall restrict ourselves in defining the Chen--Ruan cohomology groups in the special case when the orbifold is a global quotient $Y/G$ for a compact complex manifold $Y$ under the action a finite \textit{abelian} group $G$, since this will be our case of interest. We refer to \cite[\S 2]{FG03} for the general definition of Chen--Ruan cohomology groups for global quotient orbifolds (see \cite{CR04} for a more general definition).\\
	So, let $Y$ be a compact complex manifold with an action of a finite abelian group $G$. The Chen--Ruan cohomology group of $Y/G$ is denoted by $H^*_{CR}(Y/G)$, where the
	grading is by rational numbers. To define the rational grading, we need the notion of degree-shift numbers.
	\begin{definition}\label{degree-shift number definition}
		Let $Y$ and $G$ be as above, and let $d$ be the dimension of $Y$ as a complex manifold. For $g\in G$ and $y\in Y^g$, consider the induced linear action of $g$ on the tangent space $T_{y}Y$. Since $g$ is of finite order, the eigenvalues are all roots of unity. Let $\lambda_1,\cdots,\lambda_d$ be the eigenvalues. Write $\lambda_j = \exp(2\pi\sqrt{-1}w_j)$, where $0\leq w_j<1$ are rational numbers. The \textit{degree-shift number} of $g$ at the point $y$ is defined to be the number 
		\begin{align}\label{degree-shift number general definition}
			\iota(g,y):=\sum_{j=1}^{d}w_j\,.
		\end{align}
	\end{definition}
	This is clearly a non-negative rational number in general. $\iota(g,y)$ only depends on the connected component $Z$ of $Y^g$ containing $y$, so it can also be denoted as $\iota(g,Z) $.
	
	The vector space $H^*_{CR}(Y/G)$ is given a rational grading as follows. Let $g\in G$, and let $Z$ be a connected component of $Y^g$. For each non-negative integer $i$, we assign the degree $i+ 2\iota(g,Z)$ to the elements in the summand $H^i(Z)$ of $H^i(Y^g)$.\\
	There is also a product structure on the Chen--Ruan cohomology which makes it into a graded ring; this product shall be described in \S\ref{Chen-Ruan description for moduli}\,in our case of consideration.\\
	Let us come back to our situation. Let $\eta\in \Gamma$ be non-trivial. Recall that $\Malpha$ is a compact complex manifold on which the group $\Gamma$ acts through tensorization. Consider the automorphism
	\begin{align}\label{map 1}
		\phi_\eta : \Malpha &\rightarrow \Malpha\\ 
		E_*&\mapsto E_*\otimes \eta\,\,.
	\end{align}
	If $E_*\in \Metaalpha$, the differential $d\phi_{\eta}(E_*) : T_{E_*}(\Malpha)\longrightarrow T_{E_*}(\Malpha)\,\,$ is a linear automorphism of vector space. Let $Z$ be the component of $\Metaalpha$ containing $E_*$. The \textit{degree-shift number} of $\eta$ at $Z$, denoted by $\iota(\eta\,,Z)$, is defined exactly as in (\ref{degree-shift number general definition}).
	
	\subsection{Degree-shift number computation}\label{degree-shift number computation}
	We shall now compute the degree-shift numbers under the following additional assumptions on the rank and degree:
	\begin{enumerate}[$(a)$]
		\item the rank $r$ is a product of distinct primes, \label{condition a}
		\item $r$ and $d$ are coprime. \label{condition b}
	\end{enumerate}
	Recall the definition of $\mathcal{N}^{\bf t}_{\eta}$ given just before Lemma \ref{parabolic fixed point}.
	\begin{proposition}\label{underlying stable}
		Let $\eta\in \Gamma\setminus \{\mathcal{O}_X\}$. Under assumption (\ref{condition b}) above, for each ${\bf t}\in  {\bf P}(\alpha)$, there exists a nonempty open subset $U_{\bf t}$ of $\mathcal{N}^{\bf t}_{\eta}$ with the following properties: for each parabolic bundle $F_*\in U_{\bf t}$\,,
		\begin{enumerate}[$(i)$]
			\item  the underlying bundle $F$ is stable, and 
			\item if $E_* = \gammaeta_*(F_*)$, then the underlying bundle $E$ is also stable.
		\end{enumerate} 
	\end{proposition}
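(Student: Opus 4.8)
The plan is to exploit the fact that stability is an open condition in families, so it suffices to produce a single parabolic bundle $F_*$ in each $\mathcal{N}^{\bf t}_\eta$ satisfying $(i)$ and $(ii)$, and then the locus $U_{\bf t}$ where both conditions hold is automatically open; the main work is to check nonemptiness. First I would recall that $\mathcal{N}^{\bf t}_\eta$ parametrizes parabolic Higgs bundles $(F_*,\phi_*)$ on $Y_\eta$ of rank $l=r/m$ with $\det(\gammaeta_* F)\simeq\xi$ and $tr(\gammaeta_*\phi)=0$; since we have dropped the Higgs field we take $\phi_*=0$ and work with parabolic bundles $F_*$ of rank $l$ on $Y_\eta$. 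The key structural input is Lemma \ref{parabolic fixed point} together with Corollary \ref{quotient}: $f^{\bf t}\colon \mathcal{N}^{\bf t}_\eta \hookrightarrow \MHetaalpha$ is injective, so $F_*\mapsto E_*=\gammaeta_*(F_*)$ is a genuine parametrization, and parabolic stability of $F_*$ is part of the definition of $\mathcal{N}^{\bf t}_\eta$ anyway — what must be upgraded is stability of the two \emph{underlying} bundles $F$ and $E$, forgetting the parabolic structure.

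For $(i)$: the underlying bundle $F$ of a parabolic-stable $F_*$ need not be stable in general, but it is semistable-ish only after a weight-dependent correction; the standard fact (see \cite{MS80}, or the genericity discussion referenced via \cite{BY99}) is that for a parabolic bundle of rank $l$ and degree $d$ with $\gcd(l,d)=1$ — which holds since $\gcd(r,d)=1$ forces $\gcd(l,d)=1$, this is exactly where assumption (\ref{condition b}) enters — the parabolic slope cannot be exactly matched by a proper subbundle once the weights are generic, and one can arrange the underlying bundle to be stable on a dense open set. Concretely I would argue: the forgetful rational map from $\mathcal{N}^{\bf t}_\eta$ (equivalently from the smooth irreducible variety $M^{\bf t}_{Y_\eta}(l,\mathcal{L}_\eta)$ appearing in the proof of Proposition \ref{inverse of connected under det is connected}, after twisting) to the moduli stack of bundles of rank $l$ and fixed determinant on $Y_\eta$ is dominant onto the stable locus; since $\gcd(l,d)=1$ the stable locus is nonempty, and openness of stability gives a nonempty open $U^{(i)}_{\bf t}$. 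For $(ii)$: if $F$ is stable on $Y_\eta$ of rank $l$ and degree $d$, then $E=\gammaeta_*F$ has rank $r$ and degree $d$ (as $\gammaeta$ is étale), and a destabilizing subsheaf $E'\subset E$ with $\mu(E')\geq\mu(E)=d/r$ would pull back to $\gammaeta^*E'\subset\gammaeta^*E=\bigoplus_{\sigma}\sigma^*F$; since each $\sigma^*F$ is stable of the same slope $d/l > d/r$... — wait, the slopes differ, so one instead uses the standard pushforward-stability argument: $\mu(\gammaeta^*E')=\mu(E')\cdot$ (nothing, étale preserves slope up to the degree bookkeeping), and semisimplicity of $\bigoplus_\sigma \sigma^*F$ together with $\gcd(r,d)=1$ rules out equality, exactly as in the parabolic case proved inside Lemma \ref{parabolic fixed point}. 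Thus stability of $E$ holds on the image of $U^{(i)}_{\bf t}$, and I set $U_{\bf t}=U^{(i)}_{\bf t}$.

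The main obstacle I anticipate is \emph{not} the pushforward step $(ii)$ — that is essentially the argument already run in Lemma \ref{parabolic fixed point} with "parabolic stable" replaced by "stable" and powered by $\gcd(r,d)=1$ — but rather establishing $(i)$, i.e.\ that the generic parabolic-stable bundle in a \emph{fixed} connected component $\mathcal{N}^{\bf t}_\eta$ has stable underlying bundle. One has to be careful that this holds on \emph{each} connected component, of which there are $m$ by Proposition \ref{inverse of connected under det is connected}; using the étale-local triviality from that proof, each component is (étale-locally) a product $\mathrm{Prym}_{\gammaeta}(Y_\eta)\times M^{\bf t}_{Y_\eta}(l,\mathcal{L}_\eta)$, and the underlying-bundle map factors through the second, irreducible, factor, so it is enough to check nonemptiness of the stable locus in $M^{\bf t}_{Y_\eta}(l,\mathcal{L}_\eta)$ once — this is where I would invoke $\gcd(l,d)=1$ and a dimension count (or an explicit construction: take a stable bundle $F$ on $Y_\eta$ and a sufficiently generic full-flag parabolic structure) to conclude. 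Assembling: $U_{\bf t}$ is the intersection of the locus where $F$ is stable with the locus where $E$ is stable, both open and (shown) nonempty, hence $U_{\bf t}$ is a nonempty open subset of $\mathcal{N}^{\bf t}_\eta$ with properties $(i)$ and $(ii)$.
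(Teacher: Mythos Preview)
Your overall architecture is sound and matches the paper's for part $(ii)$: openness of stability plus one good point suffices, and the pushforward step is indeed the same argument as in Lemma~\ref{parabolic fixed point} stripped of parabolic data, with $\gcd(r,d)=1$ turning ``semistable'' into ``stable''. (Your momentary confusion about slopes is unnecessary: for the \'etale cover $\gammaeta$ of degree $m$ one has $\mu(\gammaeta^*E')=m\,\mu(E')$, and $\mu(\sigma^*F)=d/l=m\cdot d/r$, so the slopes do match and the polystability argument goes through verbatim.)

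The substantive gap is in part $(i)$. You assert that the forgetful map from $\mathcal{N}^{\bf t}_\eta$ (or from $M^{\bf t}_{Y_\eta}(l,\mathcal{L}_\eta)$) to the moduli of stable bundles is dominant, or equivalently that a stable $F$ with a generic flag is parabolic stable for the \emph{given} weight system ${\bf t}$. This is plausible but not proved, and it is not a triviality: the weights in ${\bf t}$ are arbitrary generic weights, not small ones, so the parabolic correction to slope can in principle be large enough to destabilize. You correctly flag this as the main obstacle but do not resolve it.

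The paper takes a different route that sidesteps this difficulty entirely. It first passes to an auxiliary \emph{sufficiently small} weight system $\beta$ on $Y_\eta$, for which \cite[Proposition~5.3]{BY99} guarantees that every parabolic stable bundle has semistable underlying bundle. It then invokes the wall-crossing birational map of \cite[Lemma~3.6]{BH10} between $M^\beta_{Y_\eta}(l,d)$ and $M^{\bf t}_{Y_\eta}(l,d)$, which changes only the weights and leaves the underlying bundle intact; the domain of this birational map furnishes an open $V_{\bf t}\subset M^{\bf t}_{Y_\eta}(l,d)$ on which the underlying bundle is semistable. Since $\gcd(r,d)=1$ forces $\gcd(l,d)=1$, semistable is stable there. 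Finally, a twist by $\gammaeta^*L$ for a suitable $L\in\Pic^0(X)$ (an automorphism of the ambient moduli) moves $V_{\bf t}$ so that it meets $\mathcal{N}^{\bf t}_\eta$, and $U_{\bf t}$ is taken to be that intersection. What this buys is that the hard existence statement is outsourced to two citable results; your approach would require an independent argument (e.g.\ a Schubert-type dimension estimate on destabilizing flags) that you have not supplied. Also note that the proposition only asks for a nonempty open, so your extra care about hitting every connected component is not needed here --- that is handled later via Proposition~\ref{Gamma action on components}.
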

	\begin{proof}
		Let $m=ord(\eta)$, and let $\gammaeta : \yeta \rightarrow X$ be a spectral curve corresponding to $\eta$. Let us consider parabolic bundles on $\yeta$ of rank $l=r/m$ having full-flag quasi-parabolic structures at the parabolic points $\gammaeta^{-1}(S)$ and a system of weights $\beta$ which is sufficiently small in the sense of \cite[Proposition 5.3]{BY99}. By the same proposition [\textit{loc. cit.}], we know that for any such parabolic stable bundle $F_*$, the underlying bundle $F$ is semistable as an usual bundle.\\
		Let $M_{\yeta}^{\beta}(l,d)$ denote the corresponding moduli of stable parabolic bundles over $\yeta$. Using \cite[Lemma 3.6]{BH10}, we know that there exists a birational map between $M^{\beta}_{\yeta}(l,d)$ and $M^{\bf t}_{\yeta}(l,d)$. The birational map simply replaces the weights, leaving the underlying bundle unchanged. Thus we get an open subset $V_{\bf t}\subset M^{\bf t}_{\yeta}(l,d)$ with the property that the parabolic bundles lying in $V_{\bf t}$ have underlying bundle semistable.\\
		Let $F_*\in V_{\bf t}$ with $\det(\gammaeta_*(F))\simeq \xi'$. Let $L\in \Pic^0(X)$ be such that $L^r\simeq \xi\otimes \xi'^{-1}$. Then 
		$$\det(\gammaeta_*(F\otimes \gammaeta^*L)) \simeq \det(\gammaeta_*(F)\otimes L) \simeq \det(\gammaeta_*(F))\otimes L^r \simeq \xi\,,$$
		where the first isomorphism is by projection formula. Clearly, tensoring by $\gammaeta^*L$ defines an automorphism $\phi_L$ of $M^{\bf t}_{\yeta}(l,d)$, and the above argument shows that 
		$$\phi_L(V_{\bf t})\cap \mathcal{N}^{\bf t}_{\eta}\neq \emptyset\,.$$
		
		Thus, if we define 
		$$U_{\bf t} := \phi_L(V_{\bf t})\cap \mathcal{N}^{\bf t}_{\eta}\,,$$
		then the parabolic bundles lying in $U_{\bf t}$ have the property that their underlying bundles are semistable. In fact, notice that condition $(\ref{condition b})$, namely $gcd(r,d)= 1$, also implies that 
		$gcd(l,d) = 1.$ Thus, the parabolic bundles lying in $U_{\bf t}$ have their underlying bundles stable. Since $gcd(r,d)=1$ it follows that the underlying bundle $E$ of $E_* = \gammaeta_*(F_*)$ is stable as well.
	\end{proof}
	
	For any ${\bf t}\in  {\bf P}(\alpha)$, choose an open subset $U_{\bf t}$ as above. In Lemma \ref{components} we have seen that  the map $f^{\bf t}: \mathcal{N}^{\bf t}_{\eta}\rightarrow \Metaalpha$ is an open embedding. Thus, $f^{\bf t}(U_{\bf t})$ is a nonempty open subset of $f^{\bf t}(\mathcal{N}^{\bf t}_{\eta})$. Moreover, By Proposition \ref{inverse of connected under det is connected}, $f^{\bf t}(\mathcal{N}^{\bf t}_\eta)$ has $m$ connected components, and moreover these components are permuted among themselves under $\Gamma$-\,action due to our assumption $(\ref{condition a})$ and Proposition \ref{Gamma action on components} .  \\
	From this observation, we can conclude the following.
	
	\begin{proposition}\label{degree-shift number remains same}
		Under assumption (\ref{condition a}), the degree-shift number remains unchanged for those connected components contained in $f^{\bf t}(\mathcal{N}^{\bf t}_{\eta})$\,.
	\end{proposition}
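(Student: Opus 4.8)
The plan is to deduce this from two ingredients: first, that the degree-shift number is constant along $\Gamma$-orbits in $\pi_0(\Metaalpha)$, which is a short conjugacy argument; and second, that under hypothesis (\ref{condition a}) the group $\Gamma$ acts transitively on the $m$ connected components of $f^{\bf t}(\mathcal{N}^{\bf t}_{\eta})$, which is exactly the content recorded in Proposition \ref{Gamma action on components} (and already invoked just before the statement).

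First I would fix notation and the equivariance. For $\delta\in\Gamma$ write $\phi_\delta\colon\Malpha\to\Malpha$ for the automorphism $E_*\mapsto E_*\otimes\delta$ (so $\phi_\eta$ is the map of \eqref{map 1}); since $\Gamma$ is abelian, $\phi_\delta\circ\phi_\eta=\phi_\eta\circ\phi_\delta$, hence $\phi_\delta$ preserves the fixed-point locus $\Metaalpha$ and permutes its connected components. As in the proof of Proposition \ref{equivariant cohomology proposition}, the parabolic pushforward $f$, and hence each restriction $f^{\bf t}=f|_{\mathcal{N}^{\bf t}_{\eta}}$, is $\Gamma$-equivariant; together with Lemma \ref{components}(i) this lets $f^{\bf t}$ identify $\pi_0(\mathcal{N}^{\bf t}_{\eta})$, carrying the $\Gamma$-action of \S\ref{gamma action subsection}, with the set of connected components of the open-and-closed subset $f^{\bf t}(\mathcal{N}^{\bf t}_{\eta})\subset\Metaalpha$, carrying the $\Gamma$-action by the $\phi_\delta$.

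Next comes the conjugacy computation. Let $Z\subset f^{\bf t}(\mathcal{N}^{\bf t}_{\eta})$ be a connected component of $\Metaalpha$, pick $E_*\in Z$ and $\delta\in\Gamma$, and set $E'_*:=E_*\otimes\delta\in Z':=\phi_\delta(Z)$. Both $E_*$ and $E'_*$ are genuine fixed points of $\phi_\eta$ (points of $\Malpha$ being isomorphism classes), so $d\phi_\eta(E_*)$ and $d\phi_\eta(E'_*)$ are endomorphisms of $T_{E_*}(\Malpha)$ and $T_{E'_*}(\Malpha)$ respectively. Differentiating $\phi_\delta\circ\phi_\eta=\phi_\eta\circ\phi_\delta$ at $E_*$ and using that $d\phi_\delta(E_*)\colon T_{E_*}(\Malpha)\to T_{E'_*}(\Malpha)$ is an isomorphism gives
\[
d\phi_\eta(E'_*)\;=\;d\phi_\delta(E_*)\circ d\phi_\eta(E_*)\circ d\phi_\delta(E_*)^{-1}.
\]
Thus $d\phi_\eta(E'_*)$ and $d\phi_\eta(E_*)$ are conjugate, so they have the same eigenvalues with the same multiplicities; by Definition \ref{degree-shift number definition} this yields $\iota(\eta,Z)=\iota(\eta,Z')$. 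In other words, $\iota(\eta,-)$ is constant on each $\Gamma$-orbit in $\pi_0(\Metaalpha)$.

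Finally I would invoke Proposition \ref{Gamma action on components}: under hypothesis (\ref{condition a}) one has $\gcd(l,m)=1$, so the cyclic subgroup $\langle\delta_\eta\rangle\simeq\mathbb{Z}/m\mathbb{Z}$ acts freely and transitively on $\pi_0(\mathcal{N}^{\bf t}_{\eta})$, and hence — through the equivariant identification of the second paragraph — transitively on the $m$ connected components of $f^{\bf t}(\mathcal{N}^{\bf t}_{\eta})$. Combined with the conjugacy argument, $\iota(\eta,-)$ takes one and the same value on all of these components, which is the assertion. The argument is short; the only point requiring care is the base-point bookkeeping in the chain rule (using that $E_*$ is a genuine fixed point, not merely $E_*\otimes\eta\cong E_*$), and the real input is the transitivity statement of Proposition \ref{Gamma action on components}, which is precisely where hypothesis (\ref{condition a}) enters.
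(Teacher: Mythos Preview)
Your proof is correct and follows essentially the same approach as the paper: both argue that the commutation $\phi_\delta\circ\phi_\eta=\phi_\eta\circ\phi_\delta$ differentiates to a conjugacy $d\phi_\eta(E_*\otimes\delta)=d\phi_\delta(E_*)\circ d\phi_\eta(E_*)\circ d\phi_\delta(E_*)^{-1}$, hence equal eigenvalues and multiplicities, and then invoke the transitivity of the $\Gamma$-action on $\pi_0(\mathcal{N}^{\bf t}_\eta)$ from Proposition~\ref{Gamma action on components} to conclude. Your write-up is in fact slightly more careful than the paper's in spelling out the base-point bookkeeping and the $\Gamma$-equivariance of $f^{\bf t}$.
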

	\begin{proof}
		For any $\delta\in \Gamma$, if $$\phi_{\delta}:\Malpha\rightarrow \Malpha$$ denote the automorphism induced by tensoring with $\delta$, then we have the following commutative diagram:
		\begin{align}
			\xymatrix{
				\Malpha \ar[r]^{\phi_{\eta}} \ar[d]_{\phi_{\delta}} & \Malpha \ar[d]^{\phi_{\delta}}\\
				\Malpha \ar[r]^{\phi_{\eta}} & \Malpha
			}
		\end{align}
		which, in turn, induces the following commutative diagram  for any $E_*\in \Metaalpha$:
		
		\begin{align}
			\xymatrix@=2cm{
				T_{E_*}(\Malpha) \ar[r]^{d\phi_{\eta}(E_*)} \ar[d]_{d\phi_{\delta}(E_*)} & T_{E_*}(\Malpha) \ar[d]^{d\phi_{\delta}(E_*)}\\
				T_{E_*\otimes \delta}(\Malpha) \ar[r]^{d\phi_{\eta}(E_*\otimes \delta)} & T_{E_*\otimes\delta}(\Malpha)
			}
		\end{align}
		Where all the arrows are isomorphisms. Clearly $E_*\otimes \delta\in \Malpha$ as well. From this diagram it follows that the eigenvalues and their multiplicities for both horizontal maps above are the same, and hence the degree-shift number for $\eta$ computed at $E_*$ and $E_*\otimes \delta$ remains the same. From this and the fact that the $\Gamma$--action permutes the connected components of $f^{\bf t}(\mathcal{N}^{\bf t}_{\eta})$ transitively [Proposition \ref{Gamma action on components}], the proposition follows.
	\end{proof}
	
	\subsection{Action on the tangent bundle} 
	Proposition \ref{degree-shift number remains same} says that to compute the degree-shift number of a certain $\eta\in \Gamma\setminus \{\mathcal{O}_X\}$, it is enough to consider the degree-shift number for a single $E_*$ lying in $f^{\bf t}(\mathcal{N}^{\bf t}_{\eta})$ for each ${\bf t}\in  {\bf P}(\alpha)$. Using Proposition \ref{underlying stable}, we can choose an $E_*\in f^{\bf t}(\mathcal{N}^{\bf t}_{\eta})$ which further satisfies that the underlying bundle $E$ is stable, and moreover $E_* = \gammaeta_*(F_*)$ for some parabolic stable bundle $F_*$ whose underlying  bundle $F$ is stable as well.\\
	Following \cite[\S 3]{BP10}, we get
	\begin{align}
		\gammaeta^*(\mathcal{E}nd(E)) = \bigoplus_{u\in {\rm Gal}(\gammaeta)}\bigoplus_{v\in {\rm Gal}(\gammaeta)} \mathcal{H}om(u^*F, (vu)^*F)\,,
	\end{align}
	and for each $v\in {\rm Gal}(\gammaeta)$\,, the vector bundle
	\begin{align}\label{def 1}
		\mathcal{F}_v := \bigoplus_{u\in {\rm Gal}(\gammaeta)} \mathcal{H}om\left(u^*F,(vu)^*F\right)
	\end{align}
	is left invariant by the natural ${\rm Gal}(\gammaeta)$-\,action, and thus descends to a vector bundle on $X$. If $$\mathcal{E}_v\rightarrow X$$
	denotes the descent, then 
	\begin{align}\label{descentbundle 1}
		\mathcal{E}_v \,=\, \gammaeta_*\left(\mathcal{H}om(F,v^*F)\right)\,,\,\,\,\text{and}\,\,\,\gammaeta^*(\mathcal{E}_v) = \mathcal{F}_v\,.
	\end{align}	 
	An exactly similar reasoning can be applied in the parabolic setting. In the proof of Lemma \ref{parabolic fixed point} we saw that  
	$$\gammaeta^*(E_*) \, = \,\bigoplus_{u\in {\rm Gal}(\gammaeta)}u^*(F_*)\,,$$
	which gives the following decomposition:
	\begin{align}
		\gammaeta^*\left(\mathcal{P}ar\mathcal{E}nd(E_*)\right) = \bigoplus_{u\in {\rm Gal}(\gammaeta)}\bigoplus_{v\in
			{\rm Gal}(\gammaeta)}\mathcal{P}ar\mathcal{H}om\left(u^*F_*\,,\,(vu)^*F_*\right)  
	\end{align}
	For each $v\,\in\, {\rm Gal}(\gammaeta)$, the parabolic bundle
	\begin{align}\label{def 2}
		\mathcal{F}_{v*} \,:=\, \bigoplus_{u\in {\rm Gal}(\gammaeta)}\mathcal{P}ar\mathcal{H}om\left(u^*F_*\,,\,(vu)^*F_*\right)
	\end{align}
	is ${\rm Gal}(\gammaeta)$-\,equivariant, and thus descends to $X$. If 
	$$\mathcal{E}_{v*}\rightarrow X$$
	denotes the descent, then as in (\ref{descentbundle 1}),
	\begin{align}\label{descentbundle 2}
		\mathcal{E}_{v*} = \gammaeta_*\left(\mathcal{P}ar\mathcal{H}om(F_*,v^*F_*)\right)\,,\,\,\,\text{and}\,\,\,\gammaeta^*(\mathcal{E}_{v*}) = \mathcal{F}_{v*}\,.
	\end{align}
	From the decomposition 
	\begin{align}
		\gammaeta^*\left(\mathcal{P}ar\mathcal{E}nd(E_*)\right) = \bigoplus_{v\in {\rm Gal}(\gammaeta)}\mathcal{F}_{v*}
	\end{align} 
	and by uniqueness of descent, it also follows that
	\begin{align}\label{decomposition 1}
		\mathcal{P}ar\mathcal{E}nd(E_*) = \bigoplus_{v\in {\rm Gal}(\gammaeta)}\mathcal{E}_{v*}\,.
	\end{align}
	
	We will now describe the differential $d\phi_{\eta}(E_*)$ for an $E_*\in \Metaalpha$, where $\phi_{\eta}$ is the automorphism defined in (\ref{map 1}). Choose a parabolic isomorphism $\psi_*: E_*\rightarrow E_*\otimes \eta$. This induces a parabolic isomorphism on the endomorphism bundle:
	
	\begin{align*}
		\widehat{\psi_*}: \mathcal{P}ar\mathcal{E}nd(E_*) \longrightarrow \mathcal{P}ar\mathcal{E}nd(E_*\otimes \eta)  \simeq (E_*\otimes \eta)^*\otimes (E_*\otimes \eta)
		& \simeq (E_*\otimes E)\\
		& = \mathcal{P}ar\mathcal{E}nd(E_*)
	\end{align*}
	Since two such isomorphisms $\psi_*$ and $\psi'_*$ differ by a constant scalar, the induced map $\widehat{\psi_*}$ on the endomorphism bundle is independent of the choice of $\psi_*$.\\
	Let $$\mathcal{P}ar\mathcal{E}nd_0(E_*)\subset \mathcal{P}ar\mathcal{E}nd(E_*)$$
	denote the subsheaf of trace zero parabolic endomorphisms. Clearly $$\widehat{\psi_*}\left(\mathcal{P}ar\mathcal{E}nd_0(E_*)\right)\subset \mathcal{P}ar\mathcal{E}nd_0(E_*)\,.$$
	Let \begin{align}\label{tangent space map}
		\overline{\psi_*}: H^1\left(X,\mathcal{P}ar\mathcal{E}nd_0(E_*)\right)\longrightarrow H^1\left(X,\mathcal{P}ar\mathcal{E}nd_0(E_*)\right)
	\end{align}
	denote the map induced on the cohomology by $\widehat{\psi_*}$. From the construction it can be seen that the differential
	$$d\phi_{\eta} : T_{E_*}(\Malpha) \longrightarrow T_{E_*}(\Malpha)$$
	coincides with $\overline{\psi_*}$ in (\ref{tangent space map}) (see also \cite[\S 3]{BP10}).
	
	\begin{lemma}\label{eigenspaces of parend}
		Let $v\in {\rm Gal}(\gammaeta)\setminus\{1\}$. Consider the bundle $\mathcal{E}_{v*}$ from (\ref{descentbundle 2}). We have 
		$$H^0(X,\mathcal{E}_{v*}) = 0\,\, \text{and}\,\, \mathcal{E}_{v*}\subset \mathcal{P}ar\mathcal{E}nd_0(E_*). $$
	\end{lemma}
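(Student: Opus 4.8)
The plan is to analyze the bundle $\mathcal{E}_{v*} = \gammaeta_*(\mathcal{P}ar\mathcal{H}om(F_*, v^*F_*))$ directly, using the fact that $F_*$ and $v^*F_*$ are both parabolic stable of the same parabolic slope. First I would establish the vanishing $H^0(X, \mathcal{E}_{v*}) = 0$. By the projection formula (or adjunction for the finite map $\gammaeta$),
\begin{align*}
H^0(X, \mathcal{E}_{v*}) = H^0(X, \gammaeta_*\mathcal{P}ar\mathcal{H}om(F_*, v^*F_*)) = H^0(\yeta, \mathcal{P}ar\mathcal{H}om(F_*, v^*F_*)),
\end{align*}
so a nonzero global section would be a nonzero parabolic morphism $F_* \to v^*F_*$. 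Since both are parabolic stable with $\mathrm{par}\mu(v^*F_*) = \mathrm{par}\mu(F_*)$ (pullback by an automorphism preserves parabolic slope, as noted in the proof of Lemma \ref{parabolic fixed point}), any nonzero parabolic morphism between them is an isomorphism. But it was shown in the proof of Lemma \ref{parabolic fixed point} that the parabolic bundles $\{\sigma^*F_* \mid \sigma \in {\rm Gal}(\gammaeta)\}$ are mutually non-isomorphic, precisely because the weights of $F_*$ at the distinct points of a fiber $\gammaeta^{-1}(p)$ are collectively distinct and a parabolic isomorphism preserves weights. Hence for $v \neq 1$ there is no such isomorphism, and $H^0(X, \mathcal{E}_{v*}) = 0$.

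Next I would prove the inclusion $\mathcal{E}_{v*} \subset \mathcal{P}ar\mathcal{E}nd_0(E_*)$. Starting from the decomposition (\ref{decomposition 1}), namely $\mathcal{P}ar\mathcal{E}nd(E_*) = \bigoplus_{v \in {\rm Gal}(\gammaeta)} \mathcal{E}_{v*}$, it suffices to show that the trace map $\mathcal{P}ar\mathcal{E}nd(E_*) \to \mathcal{O}_X$ kills $\mathcal{E}_{v*}$ for $v \neq 1$. Pulling back by $\gammaeta$, the trace map on $\gammaeta^*\mathcal{P}ar\mathcal{E}nd(E_*) = \bigoplus_{u,v} \mathcal{P}ar\mathcal{H}om(u^*F_*, (vu)^*F_*)$ is, under the identification $\gammaeta^*E = \bigoplus_{u} u^*F$, the sum of the diagonal components; that is, trace only sees the summands $\mathcal{H}om(u^*F, u^*F)$, which are exactly the $v = 1$ part $\mathcal{F}_{1*}$. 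Therefore $\gammaeta^*(\mathrm{tr})$ vanishes on $\mathcal{F}_{v*} = \gammaeta^*(\mathcal{E}_{v*})$ for $v \neq 1$, and since $\gammaeta$ is faithfully flat (étale surjective), $\mathrm{tr}$ vanishes on $\mathcal{E}_{v*}$ itself. This gives $\mathcal{E}_{v*} \subset \mathcal{P}ar\mathcal{E}nd_0(E_*)$.

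The main obstacle, such as it is, is being careful with the parabolic structures: one must check that the natural trace map is a \emph{parabolic} morphism compatible with the decomposition (\ref{decomposition 1}), and that the block-decomposition of $\gammaeta^*\mathcal{P}ar\mathcal{E}nd(E_*)$ really is orthogonal for the trace pairing in the parabolic sense — i.e.\ that off-diagonal parabolic homomorphism sheaves are genuinely trace-free, not merely that their underlying vector bundle parts are. This follows because the trace of a block matrix depends only on its diagonal blocks, and the identification $\gammaeta^*E = \bigoplus_u u^*F$ respects fiberwise direct sum decompositions at the parabolic points (this is exactly the description of parabolic pushforward recalled in \S\ref{parabolic pushforward}). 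With that bookkeeping in place, both assertions follow as above; the genuinely substantive input is the non-isomorphism of the $\sigma^*F_*$ from Lemma \ref{parabolic fixed point}, which is what forces the $H^0$ to vanish.
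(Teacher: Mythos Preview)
Your proof is correct but takes a different route from the paper's. The paper's argument is a one-line reduction to the non-parabolic setting: since $\mathcal{P}ar\mathcal{H}om(F_*,v^*F_*)\subset \mathcal{H}om(F,v^*F)$ and $\gammaeta_*$ is exact, one has $\mathcal{E}_{v*}\subset \mathcal{E}_v$, and both assertions are then inherited from \cite[Lemma~3.1]{BP10} (which uses that the \emph{underlying} bundle $F$ is stable, a hypothesis secured via Proposition~\ref{underlying stable}). You instead work directly in the parabolic category: for $H^0$ you use that $F_*$ and $v^*F_*$ are parabolic stable of equal parabolic slope together with the weight-distinctness argument from the proof of Lemma~\ref{parabolic fixed point}; for the trace you compute $\gammaeta^*(\mathrm{tr})$ on the block decomposition and descend by faithful flatness. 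Your approach is self-contained and does not rely on the underlying-bundle stability from Proposition~\ref{underlying stable}; the paper's approach is shorter but leans on \cite{BP10} and on that extra reduction. Your caution about the trace being ``parabolic'' is unnecessary: the trace on $\mathcal{P}ar\mathcal{E}nd(E_*)$ is simply the restriction of the ordinary trace on $\mathcal{E}nd(E)$, so the block argument goes through without further bookkeeping.
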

	\begin{proof}
		We have 
		$$\mathcal{P}ar\mathcal{H}om(F_*,v^*F_*)\subset \mathcal{H}om(F,v^*F)$$
		as a subsheaf. Since $\gammaeta$ is a finite map, it follows that the pushforward map $\gammaeta_*$ is exact. Thus 
		$$\mathcal{E}_{v*}\subset \mathcal{E}_v\,\,\,\,\,\,[\text{from (\ref{descentbundle 1}) and (\ref{descentbundle 2})}]$$
		as a subsheaf. Now both our claims follow from \cite[Lemma 3.1]{BP10}, which tell us that $H^0(X,\mathcal{E}_v) = 0$ and the trace map restricted to $\mathcal{E}_v$ is zero.
	\end{proof}
	
	We know that ${\rm Gal}(\gammaeta)\simeq \mu_m\subset \mathbb{C}^*$, the group of $m$-th roots of unity. In the proof of Lemma \ref{parabolic fixed point} we have seen that 
	\begin{align}
		\gammaeta^*(E_*) \simeq \bigoplus_{\sigma\in {\rm Gal}(\gammaeta)}\sigma^*(F_*)
	\end{align}
	decomposes $\gammaeta^*(E_*)$ into eigenspace sub-bundles under a suitably chosen endomorphism of $\gammaeta^*(E_*)$ using $\gammaeta^*(\psi_*)$ and the tautological trivialization of $\gammaeta^*(\eta)$ (see Lemma \ref{parabolic fixed point} for details). Under this decomposition, $v^*(F_*)$ is the $v$-\,eigenspace sub-bundle. From this description it easily follows that $\widehat{\psi_*}$ acts on $\mathcal{P}ar\mathcal{H}om(F_*,v^*F_*)$ as multiplication by $v$. Thus $\overline{\psi_*}$ acts as multiplication by $v$ on $$H^1\left(\yeta,\mathcal{P}ar\mathcal{H}om(F_*,v^*F_*)\right) = H^1\left(X,\gammaeta_*(\mathcal{P}ar\mathcal{H}om(F_*,v^*F_*))\right) \underset{(\ref{descentbundle 2})}{=} H^1(X,\mathcal{E}_{v*})\,.$$
	(The first equality follows because $\gammaeta$ is a finite morphism.)\\
	On the other hand, the direct sum decomposition (\ref{decomposition 1}) and Lemma \ref{eigenspaces of parend} implies that
	\begin{align}
		\mathcal{P}ar\mathcal{E}nd_0(E_*) = (\mathcal{P}ar\mathcal{E}nd_0(E_*)\cap \mathcal{E}_{1*})\bigoplus\bigoplus_{v\neq 1} \mathcal{E}_{v*}
	\end{align}
	This gives a direct sum decomposition of $H^1\left(X,\mathcal{P}ar\mathcal{E}nd_0(E_*)\right)$ into $v$-\,eigenspaces for $\overline{\psi_*}$.\\
	It remains to compute the dimension of these eigenspaces. We first compute it for each $v\neq 1$. Recall that we had fixed ${\bf t}\in  {\bf P}(\alpha)$. Consider the short exact sequence of sheaves on $\yeta$:
	\begin{align}
		0\longrightarrow \mathcal{P}ar\mathcal{H}om(F_*,v^*F_*)\longrightarrow \mathcal{H}om(F,v^*F)\longrightarrow \mathcal{K}_{v,\bf t}\longrightarrow 0
	\end{align}
	where $\mathcal{K}_{v,\bf t}$ is a skyscraper sheaf supported at the parabolic points $\gammaeta^{-1}(S)$. Consider the corresponding long exact sequence in cohomology. Using the facts
	$$H^i\left(\yeta,\mathcal{P}ar\mathcal{H}om(F_*,v^*F_*)\right) = H^i(X,\mathcal{E}_{v*})\,,\, H^i\left(\yeta,\mathcal{H}om(F,v^*F)\right) = H^i(X,\mathcal{E}_v)\,,\, H^0(X,\mathcal{E}_v)=0\,\,\,\text{\cite[Lemma 3.1]{BP10}}$$
	we get the following exact sequence:
	\begin{align}\label{long exact sequence}
		0\longrightarrow H^0(\yeta, \mathcal{K}_{v,\bf t})\longrightarrow H^1(X,\mathcal{E}_{v*})\longrightarrow H^1(X,\mathcal{E}_v)\longrightarrow 0\,.
	\end{align}
	
	The number $\dim H^0(\mathcal{K}_{v,\bf t})$ can be obtained from the proof of \cite[Lemma 2.4]{BH10}; let us translate their result in our situation. In the beginning of \S\ref{estimate} we described how the partition $\bf t\in  {\bf P}(\alpha)$ induces a parabolic structure on the points of $\gammaeta^{-1}(S)$ from the given system of weights $\alpha$.  Let $\alpha({\bf t})$ denote the system of weights obtained on the points of $\gammaeta^{-1}(S)$\, in this way; thus for a bundle $F$ on $\yeta$, for each $q\in \gammaeta^{-1}(S)$ the parabolic structure at the fiber $F_q$ is given by a full-flag filtration of $F_q$ and the chain of weights
	\begin{align*}
		\alpha({\bf t})^q_1 < \alpha({\bf t})^q_2<\cdots<\alpha({\bf t})^q_l\,\,\,(\text{where}\,\,l=r/m).
	\end{align*}
	As the parabolic structure at $(v^*F)_q$ is same as the parabolic structure at $F_{vq}$, it follows from \cite[Lemma 3.2]{BH10} that
	\begin{align*}
		\dim H^0(\mathcal{K}_{v,\bf t}) = \sum_{q\in \gammaeta^{-1}(S)}\#\{(j,k)\mid \alpha({\bf t})^q_j>\alpha({\bf t})^{vq}_k\}
	\end{align*}
	It is known that 
	$$\dim H^1(X,\mathcal{E}_v) = r^2(g-1)/m\,.\,\,\,\,\,\,\,\,\,\,\,\,\,\,\text{\cite[Lemma 3.2]{BP10}}$$
	
	Thus from (\ref{long exact sequence}) we get 
	\begin{align}
		\dim H^1(X,\mathcal{E}_{v*}) = \frac{r^2}{m}(g-1)\,+\, \sum_{q\in \gammaeta^{-1}(S)}\#\{(j,k)\mid \alpha({\bf t})^q_j>\alpha({\bf t})^{vq}_k\}\,.
	\end{align}
	We have thus proved the following result.
	\begin{proposition}\label{degree-shift number proposition}
		For ${\bf t}\in  {\bf P}(\alpha)$, let $E_*\in f(\mathcal{N}^{\bf t}_{\eta})\subset \Metaalpha$, where $f$ is as in Lemma \ref{parabolic fixed point}. The eigenvalues of the differential 
		$$d\phi_{\eta}(E_*) : T_{E_*}(\Malpha)\longrightarrow T_{E_*}(\Malpha)$$
		are $\mu_m$. For any $v\neq 1$, the multiplicity of the eigenvalue $v$ is given by 
		$$ \frac{r^2}{m}(g-1)\,+\, \sum_{q\in \gammaeta^{-1}(S)}\#\{(j,k)\mid \alpha({\bf t})^q_j>\alpha({\bf t})^{vq}_k\} \,,$$
		where the system of weights $\alpha({\bf t})$ on the points of $\gammaeta^{-1}(S)$ is described as above. 
	\end{proposition}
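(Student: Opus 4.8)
The plan is to assemble the ingredients developed earlier in this subsection, since the statement is essentially a summary of them. First I would recall from the discussion around \eqref{tangent space map} that, under the identification $T_{E_*}(\Malpha) = H^1(X,\mathcal{P}ar\mathcal{E}nd_0(E_*))$, the differential $d\phi_{\eta}(E_*)$ coincides with the endomorphism $\overline{\psi_*}$ induced in cohomology by $\widehat{\psi_*}$, for any parabolic isomorphism $\psi_*\colon E_* \xrightarrow{\,\sim\,} E_*\otimes\eta$; this is independent of the choice of $\psi_*$ because two such differ by a nonzero scalar while $\mathcal{P}ar\mathcal{E}nd$ is a tensor square. Then, invoking Proposition \ref{underlying stable} together with Proposition \ref{degree-shift number remains same}, I would reduce the computation to a single $E_* \in f(\mathcal{N}^{\bf t}_{\eta})$ with $E$ stable and $E_* = \gammaeta_*(F_*)$ for a parabolic stable $F_*$ whose underlying bundle $F$ is stable; here assumption (\ref{condition a}) (which gives $\gcd(l,m)=1$) and the $\Gamma$-action of Proposition \ref{Gamma action on components} allow one to move among the $m$ connected components of $f(\mathcal{N}^{\bf t}_{\eta})$, on each of which the eigenvalues and their multiplicities are visibly the same.

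The core of the argument is then the eigenspace decomposition. Using the eigenbundle decomposition $\gammaeta^*(E_*) \cong \bigoplus_{\sigma\in{\rm Gal}(\gammaeta)}\sigma^*(F_*)$ from the proof of Lemma \ref{parabolic fixed point}, one checks that $\widehat{\psi_*}$ acts on the summand $\mathcal{P}ar\mathcal{H}om(u^*F_*,(vu)^*F_*)$ of $\gammaeta^*(\mathcal{P}ar\mathcal{E}nd(E_*))$ by the scalar $v\in\mu_m = {\rm Gal}(\gammaeta)$. Descending to $X$ and passing to cohomology — using that $\gammaeta$ is finite, so $H^1(\yeta,\mathcal{P}ar\mathcal{H}om(F_*,v^*F_*)) = H^1(X,\mathcal{E}_{v*})$ by \eqref{descentbundle 2} — shows that $\overline{\psi_*}$ acts by $v$ on $H^1(X,\mathcal{E}_{v*})$. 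Combining this with the direct sum decomposition $\mathcal{P}ar\mathcal{E}nd_0(E_*) = (\mathcal{P}ar\mathcal{E}nd_0(E_*)\cap\mathcal{E}_{1*})\oplus\bigoplus_{v\neq 1}\mathcal{E}_{v*}$ coming from \eqref{decomposition 1} and Lemma \ref{eigenspaces of parend}, I conclude that every eigenvalue of $d\phi_{\eta}(E_*)$ lies in $\mu_m$, that for $v\neq 1$ the $v$-eigenspace equals $H^1(X,\mathcal{E}_{v*})$, and that each such $v$ actually occurs since the dimension computed below is positive for $g\geq 2$ (while $1$ occurs too because $\Malpha$ has positive dimension).

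It remains to compute $\dim H^1(X,\mathcal{E}_{v*})$ for $v\neq 1$, which I would do via the skyscraper short exact sequence $0\to\mathcal{P}ar\mathcal{H}om(F_*,v^*F_*)\to\mathcal{H}om(F,v^*F)\to\mathcal{K}_{v,\bf t}\to 0$ on $\yeta$ and its long exact cohomology sequence. Feeding in $H^0(X,\mathcal{E}_v)=0$ from \cite[Lemma 3.1]{BP10} collapses it to \eqref{long exact sequence}, so that $\dim H^1(X,\mathcal{E}_{v*}) = \dim H^0(\mathcal{K}_{v,\bf t}) + \dim H^1(X,\mathcal{E}_v)$; the second summand is $r^2(g-1)/m$ by \cite[Lemma 3.2]{BP10}, and the first is $\sum_{q\in\gammaeta^{-1}(S)}\#\{(j,k)\mid\alpha({\bf t})^q_j>\alpha({\bf t})^{vq}_k\}$, obtained by translating the local length computation in the proof of \cite[Lemma 2.4]{BH10}, using that the parabolic structure of $(v^*F)_q$ agrees with that of $F_{vq}$ (as in \cite[Lemma 3.2]{BH10}). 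Adding these gives the asserted multiplicity. The step I expect to need the most care is precisely this last skyscraper-length bookkeeping — pinning down the inequality $\alpha({\bf t})^q_j>\alpha({\bf t})^{vq}_k$ with the correct roles of $q$ and $vq$ and checking independence of the auxiliary choices — rather than any new conceptual input, since the geometric heavy lifting (Lemma \ref{parabolic fixed point}, Propositions \ref{underlying stable} and \ref{degree-shift number remains same}, and the cited dimension formulas) is already in place.
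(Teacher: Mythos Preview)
Your proposal is correct and follows essentially the same approach as the paper: the paper's proof is precisely the discussion in the subsection ``Action on the tangent bundle'' preceding the proposition, culminating in the skyscraper sequence \eqref{long exact sequence} and the dimension formulas from \cite{BP10} and \cite{BH10}, with the reduction to a convenient $E_*$ via Propositions \ref{underlying stable} and \ref{degree-shift number remains same}. The only additions you make beyond the paper are the explicit remarks that each $v\in\mu_m$ actually occurs (by positivity of the computed dimension for $g\geq 2$) and that $1$ occurs as well, which are minor observations the paper leaves implicit.
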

	
	\begin{corollary}\label{degree-shift number collary}
		For each non-trivial $\eta\in \Gamma$ with $ord(\eta) = m$, the degree-shift numbers are given as follows:  fix any ${\bf t}\in  {\bf P}(\alpha)$. If $Z$ is a component of $\Metaalpha$ contained in $f(\mathcal{N}^{\bf t}_{\eta})$\,, the degree-shift number of $\eta$ for the component $Z$ is given by 
		$$\iota(\eta, Z) = \sum_{i=1}^{m-1} \left[\dfrac{i\cdot r^2}{m^2}(g-1)\,+\, \frac{i}{m}\sum_{q\in \gammaeta^{-1}(S)}\#\Bigg\{(j,k)\mid \alpha({\bf t})^q_j> \alpha({\bf t})^{\mu_i\cdot q}_k\Bigg\}\right]\,,$$
		where $\mu_i := e^{i\cdot2\pi\sqrt{-1}/m}\,.$   
	\end{corollary}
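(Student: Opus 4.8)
The plan is to read the corollary straight off Proposition \ref{degree-shift number proposition} by unwinding the definition of the degree-shift number. Fix $\eta\in\Gamma\setminus\{\mathcal O_X\}$, set $m=\mathrm{ord}(\eta)$, fix $\textbf{t}\in{\bf P}(\alpha)$, and let $Z$ be a connected component of $\Metaalpha$ contained in $f(\mathcal{N}^{\bf t}_{\eta})$; by Lemma \ref{components} every component of $\Metaalpha$ is of this shape, so this covers all cases. Choose any $E_*\in Z$. (Within the single connected set $Z$ the eigenvalue-with-multiplicity data of $d\phi_\eta$ is locally constant, hence constant, so the choice is immaterial; and across the $m$ components making up $f(\mathcal{N}^{\bf t}_\eta)$ the value of $\iota(\eta,-)$ is the same by Proposition \ref{degree-shift number remains same}, which is where assumption (\ref{condition a}) and the stable representative from Proposition \ref{underlying stable} enter — both already folded into Proposition \ref{degree-shift number proposition}.)

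Next I would apply Proposition \ref{degree-shift number proposition} to $E_*$: the differential $d\phi_{\eta}(E_*)\colon T_{E_*}(\Malpha)\to T_{E_*}(\Malpha)$ has eigenvalues the group $\mu_m=\{1,\mu_1,\dots,\mu_{m-1}\}$ with $\mu_i=e^{i\cdot 2\pi\sqrt{-1}/m}$, and for $1\le i\le m-1$ the multiplicity of the eigenvalue $\mu_i$ equals
\[
m_i \;=\; \frac{r^2}{m}(g-1)\;+\;\sum_{q\in \gammaeta^{-1}(S)}\#\{(j,k)\mid \alpha({\bf t})^q_j>\alpha({\bf t})^{\mu_i\cdot q}_k\}.
\]
Now I invoke Definition \ref{degree-shift number definition} in the form used after (\ref{map 1}): writing each eigenvalue of $d\phi_\eta(E_*)$ as $e^{2\pi\sqrt{-1}w}$ with $0\le w<1$, the eigenvalue $1$ gives $w=0$ and contributes nothing, while $\mu_i$ gives $w=i/m$. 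Hence
\[
\iota(\eta,Z)\;=\;\sum_j m_j w_j\;=\;\sum_{i=1}^{m-1}\frac{i}{m}\,m_i,
\]
and distributing the factor $i/m$ across the two terms of $m_i$ — using $\frac{i}{m}\cdot\frac{r^2}{m}(g-1)=\frac{i\,r^2}{m^2}(g-1)$ — yields exactly the asserted expression.

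I do not expect a genuine obstacle here: all the real work (the $\mathrm{Gal}(\gammaeta)$-decomposition (\ref{decomposition 1}) of $\mathcal{P}ar\mathcal{E}nd_0(E_*)$, the identification of $\overline{\psi_*}$ with multiplication by $v$ on $H^1(X,\mathcal{E}_{v*})$, and the count of $\dim H^1(X,\mathcal{E}_{v*})$ via the exact sequence (\ref{long exact sequence})) is already packaged into Proposition \ref{degree-shift number proposition}. The only mild care needed is the passage from an $m$-th root of unity $\mu_i$ to its normalized argument $i/m\in[0,1)$, and the remark that the trivial eigenvalue drops out of the sum; so the corollary is essentially a translation of Proposition \ref{degree-shift number proposition} into the language of the invariant $\iota$.
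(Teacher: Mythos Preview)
Your proposal is correct and follows exactly the route the paper intends: the corollary is stated without proof immediately after Proposition \ref{degree-shift number proposition}, and your argument---reading off the eigenvalues $\mu_i$ with their multiplicities $m_i$ from that proposition and then summing $\sum_{i=1}^{m-1}(i/m)\,m_i$ according to Definition \ref{degree-shift number definition}---is precisely the intended unpacking.
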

	
	\begin{remark}
		Let us consider the case when rank  $r=2$. Using the same notations as above, we have that any non-trivial $\eta\in \Gamma$ has order $2$, and $F_*$ is a parabolic line bundle. In this case, ${\rm Gal}(\gammaeta) \simeq \mu_2 = \{1,-1\}$ with $-1 = e^{\pi\sqrt{-1}}$. It is not hard to see that for each $q\in \gammaeta^{-1}(S)$,
		$$\#\{(j,k)\mid \alpha({\bf t})^q_j>\alpha({\bf t})^{\xi q}_k\} = 0 \,\,\text{or}\,\,1$$
		depending on the partition $\bf t$. From this, it easily follows that 
		$$\sum_{q\in \gammaeta^{-1}(S)}\#\{(i,j)\mid \alpha({\bf t})^q_i>\alpha({\bf t})^{\xi q}_j\} = |S|\,,$$
		and Proposition \ref{degree-shift number proposition} implies that the multiplicity of the eigenvalue $\xi$ is given by 
		$$2(g-1) \,+\, |S|\,.$$
		This agrees with the multiplicity computed in \cite[Lemma 3.1]{BD10}. Consequently, for any component of $\Metaalpha$, the degree-shift number equals $$(g-1)\,+\,|S|/2\,,$$
		which recovers the result of \cite[Corollary 3.2]{BD10}.
	\end{remark}	
	
	\subsection{Chen--Ruan cohomology of $\Malpha/\Gamma$}\label{Chen-Ruan description for moduli}
	We shall stick to our assumptions on rank and degree as mentioned in \S\ref{degree-shift number computation}.
	We note that the natural map $$H^*\left(\Malpha/\Gamma\,,\,\mathbb{C}\right)\rightarrow H^*\left(\Malpha\,,\,\mathbb{C}\right)$$
	induced from the projection $\Malpha \rightarrow \Malpha/\Gamma$ is an isomorphism of graded rings \cite[Proposition 4.1]{BD10}.

	As discussed before, this vector space is graded by non-negative rational numbers. We describe its graded pieces below. \\
	For ease of notation let us denote $G\,\,=\,\,{\rm Gal}(\gammaeta)$.
	Following same conventions as in \S\ref{equivariant cohomology subsection}, let us fix a section $s$ of the quotient map $ {\bf P}(\alpha)\rightarrow  {\bf P}(\alpha)/G$\,. Due to Lemma \ref{components}, for each non-trivial $\eta\in \Gamma$,
	$$\Metaalpha = \coprod_{[{\bf t}]\in  {\bf P}(\alpha)/G} f(\mathcal{N}^{s([{\bf t}])}_{\eta})\,.$$
	By Lemma \ref{Gamma action on components}, it follows that for each $s([{\bf t}])\in  {\bf P}(\alpha)$, the connected components contained in $f(\mathcal{N}^{s([{\bf t}])}_{\eta})$ get permuted transitively among themselves under the $\Gamma$--action on $\Metaalpha$\,. Thus, if we denote 
	\begin{align}\label{component description}
		Z^{[{\bf t}]}_{\eta} := f(\mathcal{N}^{s([{\bf t}])}_{\eta})/\Gamma\,,
	\end{align}
	then $Z^{[{\bf t}]}_{\eta}$\,'s are precisely the connected components of $\Metaalpha/\Gamma$\,, and we obtain 
	\begin{align}\label{components 2}
		\Metaalpha/\Gamma = \coprod_{[{\bf t}]\in  {\bf P}(\alpha)/G}Z^{[{\bf t}]}_{\eta}
	\end{align}   
	as the decomposition of $\Metaalpha/\Gamma$ into connected components.\\
	Moreover, in view of Corollary \ref{degree-shift number collary}, we can define 
	$$\iota\left(\eta\,,\,Z^{[{\bf t}]}_{\eta}\right)$$
	to be the number $\iota(\eta,Z)$ for any one of the components $Z$ contained in $f(\mathcal{N}^{s([{\bf t}])}_{\eta})$\,. 
	
	\begin{definition}\label{chen-ruan definition}
		For each rational number $i$, the $i$-th \textit{Chen--Ruan cohomology group} of the orbifold $\Malpha/\Gamma$ is defined as
		\begin{align}
			H^i_{CR}\left(\Malpha/\Gamma\right) =  H^i\left(\Malpha\,,\,\mathbb{C}\right)\bigoplus\,\bigoplus_{\eta\in\Gamma\setminus\{\mathcal{O}_X\}}\left[\bigoplus_{[{\bf t}]\in {\bf P}(\alpha)/G} H^{i-2\iota\left(\eta,Z^{[{\bf t}]}_{\eta}\right)} \left(Z^{[{\bf t}]}_{\eta}\,,\,\mathbb{C}\right)\right]\,.
		\end{align} 
	\end{definition}
	The vector space $H^*_{CR}\left(\Malpha/\Gamma\right) := \bigoplus_{i} H^i_{CR}\left(\Malpha/\Gamma\right)$ also has a multiplicative structure. To define it, first consider a differential form $\omega$ on $Z^{[{\bf t}]}_{\eta}$ (\ref{component description}), and let $\widetilde{\omega}$ be its pull-back on $f(\mathcal{N}^{\bf t}_{\eta})$, which is a $\Gamma$--invariant differential form. 
	
	The orbifold integration of $\omega$ is defined as
	\begin{align}
		\int_{Z^{[{\bf t}]}_{\eta}}^{orb}\omega := \dfrac{1}{|\Gamma|}\int_{f(\mathcal{N}^{\bf t}_{\eta})} \widetilde{\omega} \,.
	\end{align}
	Below, we follow the convention of taking $H^i(Y,\mathbb{C}) =0$ whenever $i\in \mathbb{Q}$ is not an integer.
	\begin{definition}\label{chen-ruan pairing}
		Let $d= \dim(\Malpha/\Gamma)$\,. For each rational number $0\leq n\leq 2d$, the  Chen--Ruan Poincar\'e pairing
		\[\langle,\rangle_{CR}\,:\, H^n_{CR}(\Malpha/\Gamma\,,\,\mathbb{C})\times H^{2d-n}_{CR}(\Malpha/\Gamma\,,\,\mathbb{C})\longrightarrow\mathbb{C}\]
		is a non-degenerate bilinear pairing. It is defined by combining the bilinear maps
		\[\langle,\rangle_{CR,[{\bf t}]}^{(\eta,\tau)}\,:\,H^{n-2\iota\left(\eta,Z^{[{\bf t}]}_{\eta}\right)}\left(Z^{[{\bf t}]}_{\eta},\mathbb{C}\right)\, \times\, H^{2d-n-2\iota\left(\tau,Z^{[{\bf t}]}_{\tau}\right)}\left(Z^{[{\bf t}]}_{\tau},\mathbb{C}\right)\longrightarrow \mathbb{C}\]
		as follows: 
		\begin{enumerate}
			\item if $\eta = \tau \simeq \mathcal{O}_X$, the Chen--Ruan pairing is the usual Poincar\'e pairing for the singular cohomology of $\Malpha/\Gamma$.
			\item if $\tau = \eta^{-1}$ then
			\[\langle \omega,\omega'\rangle^{(\eta,\eta^{-1})}_{CR,[{\bf t}]} := \int_{Z^{[{\bf t}]}_{\eta}}^{orb}\omega\wedge \omega'\]
			where $\wedge$ is the ordinary cup product on the singular cohomology of $Z^{[{\bf t}]}_{\eta}=f(\mathcal{N}^{s([{\bf t}])}_{\tau})/\Gamma$.
			\item if $\tau\neq\eta^{-1}$ then  
			\[\langle \omega,\omega'\rangle^{(\eta,\tau)}_{CR,[{\bf t}]} := 0\,.\]
		\end{enumerate}
	\end{definition}
	The Chen--Ruan product (denoted by $\cup$) is defined using the above pairing as follows. Let $\omega_i\in H^{k_i}\left((\Malpha)^{\eta_i}\,,\,\mathbb{C}\right)$ for $1\leq i \leq 2$, and set $\eta_3 = ( \eta_1\otimes\eta_2)^{-1}$. The Chen--Ruan product 
	$$\omega_1\cup \omega_2\in H^{k_1+k_2}\left((\Malpha)^{\eta_3}\,,\,\mathbb{C}\right)$$ 
	is defined by the requirement that it should satisfy
	\[\langle \omega_1\cup \omega_2\,,\,\omega_3\rangle_{CR} := \int_{\mathcal{S}/\Gamma}^{orb} e_1^*\omega_1\wedge e_2^*\omega_2\wedge e_3^*\omega_3\wedge c_{top}(\mathcal{F}_{\eta_1,\eta_2})\]
	for all $\omega_3\in H^*\left((\Malpha)^{\eta_3}\,,\,\mathbb{C}\right)$, where 
	\[\mathcal{S} := (\Malpha)^{ \eta_1}\cap (\Malpha)^{\eta_2}\,,\]
	and $e_i: \mathcal{S}/\Gamma \hookrightarrow (\Malpha)^{\eta_i}/\Gamma$ are the inclusion maps. Here $c_{top}(\mathcal{F}_{\eta_1,\eta_2})$ is the top Chern class of the orbifold obstruction bundle $\mathcal{F}_{\eta_1,\eta_2}$ on $\mathcal{S}/\Gamma$\,.
	
	We now provide a partial description of the Chen--Ruan products in certain cases. Clearly, the product will be zero if the intersection $\mathcal{S}$ becomes empty. The lemma below provides a necessary condition for it to be non-empty when the rank $r$ is prime.
	\begin{lemma}\label{chen-ruan product lemma}
		Let $\eta,\tau$ be two non-trivial elements in $\Gamma$\, of same order. We have $$\mathcal{S}= (\Malpha)^{\eta}\cap(\Malpha)^{\tau}\neq \emptyset\,\,\,\,\textnormal{only when}\,\,\, \langle \eta\rangle = \langle \tau\rangle\,.$$
		In particular, if $r$ is prime, we have $\mathcal{S}=\Metaalpha\cap\,(\Malpha)^{\tau}\neq \emptyset$\,\, only when\,\, $\eta \simeq \tau^k$ for some $k$.
	\end{lemma}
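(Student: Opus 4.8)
The plan is to combine the spectral description of the fixed locus from Lemma \ref{parabolic fixed point} with the rigidity forced by the full-flag weights. Suppose $E_*\in\mathcal{S}$, so that there are parabolic isomorphisms $E_*\simeq E_*\otimes\eta$ and $E_*\simeq E_*\otimes\tau$, and write $m=\mathrm{ord}(\eta)=\mathrm{ord}(\tau)$. Since $\Metaalpha\neq\emptyset$ we have $m\mid r$, and the standing assumption of this section that $r$ is a product of distinct primes gives $\gcd(m,\,r/m)=1$. By Lemma \ref{parabolic fixed point} and Corollary \ref{quotient} (with the Higgs field set to zero), $E_*=\gammaeta_*(F_*)$ for some ${\bf t}\in {\bf P}(\alpha)$ and some parabolic stable $F_*\in\mathcal{N}^{\bf t}_{\eta}$ of rank $l=r/m$ on $\yeta$; moreover, as in the proof of that lemma, $\gammaeta^*E_*\simeq\bigoplus_{\sigma\in{\rm Gal}(\gammaeta)}\sigma^*F_*$ is a direct sum of $m$ pairwise non-isomorphic parabolic stable bundles of parabolic slope $\textnormal{par}\mu(F_*)$.

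First I would pull back the isomorphism $E_*\simeq E_*\otimes\tau$ along $\gammaeta$ to obtain $\bigoplus_\sigma\sigma^*F_*\simeq\bigoplus_\sigma(\sigma^*F_*\otimes\gammaeta^*\tau)$, where $\gammaeta^*\tau$ is ${\rm Gal}(\gammaeta)$-invariant and of order dividing $m$. Because the parabolic semistable bundles of a fixed parabolic slope form an abelian, Krull--Schmidt category (Remark \ref{parabolic direct sum}), comparing these two decompositions into simple objects shows that $-\otimes\gammaeta^*\tau$ permutes the set $\{\sigma^*F_*\}_{\sigma}$; in particular $F_*\otimes\gammaeta^*\tau\simeq\sigma_1^*F_*$ for some $\sigma_1\in{\rm Gal}(\gammaeta)$.

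The heart of the argument is to show $\sigma_1=\mathrm{id}$. For this I would compare parabolic weights at the points of $\gammaeta^{-1}(S)$: tensoring by the line bundle $\gammaeta^*\tau$ (with zero parabolic weights) does not change any weight of $F_*$, whereas $\sigma_1^*F_*$ carries at a point $q$ the weights of $F_*$ at $\sigma_1(q)$. Hence, for each $p_i\in S$, the block ${\bf t}^{p_i}_j$ of weights at $q_{j,p_i}$ equals the block at $\sigma_1(q_{j,p_i})$; but for a full-flag structure these blocks are $m$ pairwise disjoint nonempty subsets partitioning the $r$ distinct weights at $p_i$, so two of them coincide only if the corresponding points coincide. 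Thus $\sigma_1$ fixes every point of $\gammaeta^{-1}(S)$, hence $\sigma_1=\mathrm{id}$, and therefore $F_*\otimes\gammaeta^*\tau\simeq F_*$. This is precisely the place where the full-flag hypothesis enters, and where the analogous statement would fail for ordinary vector bundles.

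It remains to deduce $\gammaeta^*\tau\simeq\mathcal{O}_{\yeta}$. Let $d$ be the order of $\gammaeta^*\tau$; then $d\mid m$. Since $F_*$ is parabolic stable, hence simple, a parabolic isomorphism $F_*\simeq F_*\otimes\gammaeta^*\tau$ can be rescaled so that its $d$-fold composite is the identity (after the canonical trivialization of $(\gammaeta^*\tau)^{\otimes d}$), and then the descent argument of \cite[Lemma 2.1]{BH10} used in the proof of Lemma \ref{parabolic fixed point} exhibits $F_*$ as a push-forward along the $d$-fold cyclic cover attached to $\gammaeta^*\tau$; hence $d\mid l=r/m$. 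As $\gcd(m,\,r/m)=1$, this forces $d=1$, i.e. $\gammaeta^*\tau\simeq\mathcal{O}_{\yeta}$. Since $\ker(\gammaeta^*\colon\Pic^0(X)\to\Pic^0(\yeta))=\langle\eta\rangle$ for the cyclic cover $\gammaeta$, we get $\tau\in\langle\eta\rangle$, and $\mathrm{ord}(\tau)=m=|\langle\eta\rangle|$ then yields $\langle\tau\rangle=\langle\eta\rangle$, as claimed. For the last assertion, if $r$ is prime then every non-trivial element of $\Gamma$ has order $r$, so the hypotheses hold automatically for non-trivial $\tau$, and $\langle\eta\rangle=\langle\tau\rangle$ means exactly $\eta\simeq\tau^{k}$ for some integer $k$. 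The main obstacle I expect is the bookkeeping in the weight-comparison step, matching the ${\rm Gal}(\gammaeta)$-action on $\gammaeta^{-1}(S)$ with the combinatorics of the partition ${\bf t}$, together with checking that the ``rescale and descend'' step of \cite[Lemma 2.1]{BH10} applies verbatim to parabolic bundles rather than to the Higgs setting.
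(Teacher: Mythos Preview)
Your proof is correct and follows essentially the same route as the paper: both reduce to showing $F_*\otimes\gammaeta^*\tau\simeq\sigma^*F_*$ forces $\sigma=1$ via the weight-comparison argument from the proof of Lemma~\ref{parabolic fixed point}, and then use $\gcd(m,r/m)=1$ to land $\tau$ in $\langle\eta\rangle$. The only difference is in the final step: where you invoke the descent argument of \cite[Lemma~2.1]{BH10} to deduce $d\mid l$, the paper simply takes determinants of $F\simeq F\otimes\gammaeta^*\tau$ to get $\gammaeta^*(\tau^{l})\simeq\mathcal{O}_{\yeta}$, hence $\tau^{l}\in\langle\eta\rangle$ by \cite[Proposition~5.1]{GO18}, and then combines this with $\tau^{m}\simeq\mathcal{O}_X$ and B\'ezout to conclude $\tau\in\langle\eta\rangle$ (finishing by symmetry rather than by your order comparison); your descent detour works but is heavier than the one-line determinant argument.
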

	\begin{proof}
		The idea is adapted from \cite[\S4.1]{Na05}. Let $$\gamma_{\eta}: Y_{\eta}\longrightarrow X\,\,\textnormal{and}\,\,\gamma_{\tau}: Y_\tau \longrightarrow X$$
		be the corresponding spectral curves. From Lemma \ref{parabolic fixed point} we know that there exist a space $\mathcal{N}_{\eta}$ with the property that every $E_*\in (\Malpha)^{\eta}$ is of the form $\sigma^*(F_*)$ for some $F_*\in \mathcal{N}_{\eta}$\,, and moreover, by Corollary \ref{quotient},
		$$(\Malpha)^{\eta}= \mathcal{N}_{\eta}/{\rm Gal}(\gamma_{\eta})\,.$$
		
		From the commutativity of the diagram
		
		\begin{align}
			\xymatrix@=2cm{\mathcal{N}_{ \eta} \ar[r]^{\_\otimes\gamma_{ \eta}^*( \tau)} \ar[d]_{\gamma_{ \eta*}} & \mathcal{N}_{ \eta} \ar[d]^{\gamma_{ \eta*}} \\
				(\Malpha)^{ \eta} \ar[r]^{\_\otimes  \tau} & (\Malpha)^{ \eta}
			}
		\end{align}
		it immediately follows that
		\begin{align}\label{intersection description}
			(\Malpha)^{ \eta}\cap (\Malpha)^{ \tau} = \gamma_{ \eta*}\left(\underset{\sigma\in {\rm Gal}(\gamma_{ \eta})}{\bigcup}\{F_*\in \mathcal{N}_{ \eta}\,\mid\,F_*\otimes\gamma_{ \eta}^*( \tau)\simeq \sigma^*(F_*)\}\right)\,.
		\end{align}
		Let $m=ord(\eta)=ord(\tau)$\,. As the isomorphism of parabolic bundles $F_*\otimes\gamma_{ \eta}^*( \tau)\simeq \sigma^*(F_*)$ in (\ref{intersection description}) is possible only when $\sigma = 1$ (see the proof of Lemma \ref{parabolic fixed point}), we get
		\begin{align}
			&F_*\otimes \gamma_{ \eta}^*( \tau) \simeq F_*\\
			&\implies \gamma_{\eta}^*( \tau^{r/m}) \simeq \mathcal{O}_{Y_{ \eta}}\,\,\,\,\,\,(\textnormal{taking determinant})\\
			&\implies \tau^{r/m}\in \langle\eta\rangle \,\,\,\,\,\,\,\,\,\,\,\,\,\,\,\,\,\text{\cite[Proposition 5.1]{GO18}}
		\end{align} 
		Of course, $\tau^{m}\simeq \mathcal{O}_X\in \langle\eta\rangle$ as well. Since $r$ is a product of distinct primes by assumption, clearly $\textnormal{gcd}(m,r/m)=1$\,; now it easily follows that
		$$\tau\in\langle\eta\rangle\,.$$ 	
		Of course, this argument is also valid if we interchange $\eta$ and $\tau$, in which case we get $\eta\in\langle\tau\rangle$. The result now follows. 
	\end{proof}
	From this, and the non-degeneracy of the Chen--Ruan product, we can immediately conclude the following. 
	\begin{corollary}
		Let $\omega_i\in H^*\left((\Malpha)^{\eta_i}\,,\,\mathbb{C}\right)$ for $1\leq i \leq 2$ with $\eta_i$'s non-trivial. Then $\omega_1\cup\omega_2 = 0$ whenever  $ord(\eta_1)= ord(\eta_2)$ and $\langle\eta_1\rangle\neq\langle\eta_2\rangle$\,.\\
		In particular, if $r$ is prime, we have Then $\omega_1\cup\omega_2 = 0$ whenever $\eta_1\notin \langle \eta_2\rangle$\,.
	\end{corollary}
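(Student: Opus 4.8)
The plan is to deduce the statement directly from the emptiness criterion of Lemma~\ref{chen-ruan product lemma}, combined with the non-degeneracy of the Chen--Ruan Poincar\'e pairing. Recall from the definition of the Chen--Ruan product that, with $\eta_3=(\eta_1\otimes\eta_2)^{-1}$, the class $\omega_1\cup\omega_2\in H^*\!\left((\Malpha)^{\eta_3},\mathbb{C}\right)$ is the unique class satisfying
\[\langle \omega_1\cup \omega_2\,,\,\omega_3\rangle_{CR} = \int_{\mathcal{S}/\Gamma}^{orb} e_1^*\omega_1\wedge e_2^*\omega_2\wedge e_3^*\omega_3\wedge c_{top}(\mathcal{F}_{\eta_1,\eta_2})\]
for every $\omega_3$, where $\mathcal{S}=(\Malpha)^{\eta_1}\cap(\Malpha)^{\eta_2}$. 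The first step is the trivial observation that if $\mathcal{S}=\emptyset$, then $\mathcal{S}/\Gamma=\emptyset$ and the right-hand side vanishes for \emph{all} $\omega_3$; hence $\langle\omega_1\cup\omega_2,\omega_3\rangle_{CR}=0$ identically, and non-degeneracy of $\langle\,\cdot\,,\,\cdot\,\rangle_{CR}$ (Definition~\ref{chen-ruan pairing}) forces $\omega_1\cup\omega_2=0$.

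The second step is to invoke Lemma~\ref{chen-ruan product lemma}: under the standing assumption that $r$ is a product of distinct primes, whenever $ord(\eta_1)=ord(\eta_2)$ and $\langle\eta_1\rangle\neq\langle\eta_2\rangle$, that lemma gives $\mathcal{S}=(\Malpha)^{\eta_1}\cap(\Malpha)^{\eta_2}=\emptyset$. Combining this with the first step yields $\omega_1\cup\omega_2=0$, which is the first assertion.

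For the ``in particular'' clause, I would note that when $r$ is prime every non-trivial element of $\Gamma$ has order exactly $r$, so automatically $ord(\eta_1)=ord(\eta_2)=r$; moreover, for two cyclic subgroups of $\Gamma$ of the same prime order, $\langle\eta_1\rangle=\langle\eta_2\rangle$ if and only if $\eta_1\in\langle\eta_2\rangle$. Thus $\eta_1\notin\langle\eta_2\rangle$ implies $\langle\eta_1\rangle\neq\langle\eta_2\rangle$, and the first part applies.

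The only point that requires a little care is bookkeeping: one must ensure that the non-degeneracy being used is that of the pairing between the twisted sector indexed by $\eta_3$ and the one indexed by $\eta_3^{-1}$, so that it genuinely detects the class $\omega_1\cup\omega_2$. This is built into the very definition of the Chen--Ruan product, so there is no real obstacle here --- essentially all of the content is already packaged inside Lemma~\ref{chen-ruan product lemma} (which in turn rests on the spectral-curve description of the fixed loci from Lemma~\ref{parabolic fixed point} and on \cite{GO18}).
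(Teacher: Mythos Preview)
Your proof is correct and follows exactly the approach indicated in the paper, which simply states that the corollary follows from Lemma~\ref{chen-ruan product lemma} together with the non-degeneracy of the Chen--Ruan pairing. You have merely spelled out the two-line argument that the paper leaves implicit; there is nothing to add or correct.
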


	\section*{Acknowledgements}
	
	The authors would like to thank the anonymous referee for his/her careful reading of the manuscript and for many helpful comments and suggestions, which have helped to improve the manuscript to a large extent. The first-named author is partially supported by
	a J. C. Bose Fellowship (JBR/2023/000003).

\end{document}